\newtheorem{theorem}{Theorem}[section]
\newtheorem{lemma}{Lemma}[section]
\newtheorem{remark}{Remark}[section]
\newtheorem{definition}{Definition}[section]
\newtheorem{corollary}{Corollary}[section]
\newtheorem{proposition}{Proposition}[section]
\numberwithin{equation}{section}
\begin{document}
\title{From positive to accretive matrices }
\author{Yassine Bedrani, Fuad Kittaneh and Mohammad Sababheh}
\subjclass[2010]{15A02, 15B48, 47A63, 47A64.}
\keywords{matrix monotone function, accretive matrix, Ando's inequality, Choi's inequality, matrix mean} \maketitle

\pagestyle{myheadings}
\markboth{\centerline {}}
{\centerline {}}
\bigskip
\bigskip
\begin{abstract}
The main goal of this paper is to discuss the recent advancements of matrix means from positive matrices to accretive matrices in a more general setting. In particular, we present the general form governing the well established definition of geometric mean, then we define arbitrary matrix means and functional calculus for accretive matrices.

Applications of this new discussion involve generalizations of known inequalities from the setting of positive matrices to that of accretive matrices. This includes the arithmetic-harmonic mean comparisons, monotonicity of matrix means, Ando's inequality, Choi's inequality, Ando-Zhan subadditive inequality and much more.

\end{abstract}
\section{Introduction}
Let $\mathcal{M}_n$ be the algebra of all complex $n\times n$ matrices. We recall some basic definitions related to this algebra. A matrix $A\in\mathcal{M}_n$ is said to be positive semidefinte,  denoted  by $A\geq 0$, if 
$\left<Ax,x\right>\geq 0$ for all  $x\in\mathbb{C}^n.$ If $A\geq 0$ is invertible, it is  called positive definite, and it is denoted by $A>0.$ The class of positive definite matrices will be denoted by $\mathcal{M}_n^{+}.$ For two Hermitian matrices $A,B\in\mathcal{M}_n$, we say that $A\leq B$ (or $A<B$) if $B-A\geq 0$ (or $B-A>0$). The relation $A\leq B$ defines a partial ordering on the class of Hermitian matrices. The identity matrix in $\mathcal{M}_n$ will be denoted by $\mathcal{I}_n$, or $\mathcal{I}$ if there is no confusion. The theory of matrix means for two positive matrices has been developed by Kubo and Ando in \cite {kubo_ando} as follows.

A matrix mean $\sigma$ on $\mathcal{M}_n^+$ is a binary operation $A\sigma B$ satisfying the following requirements: 
\begin{itemize}
\item $A\leq C$ and $B\leq D$ imply $A\sigma B\leq C\sigma D;$ for any $A,B,C,D\in\mathcal{M}_n^+$.
\item $C^{*}(A\sigma B)C= (C^{*}AC)\sigma (C^{*}BC);$ for any $A,B\in\mathcal{M}_n^+$ and any invertible $C\in\mathcal{M}_n$.
\item $A_k\downarrow_k A$ and $B_k\downarrow_k B$ imply $(A_k\sigma B_k)\downarrow_k (A\sigma B)$; for any $A_k,B_k,A,B\in\mathcal{M}_n^+.$
\item $\mathcal{I}\sigma \mathcal{I}=\mathcal{I}.$
\end{itemize}
Standard examples of matrix means are given by \cite{pusz_wor}
\begin{itemize}
\item The weighted  arithmetic mean $A\nabla_{\lambda} B=(1-\lambda)A+\lambda B,$
\item The weighted Harmonic mean $A!_{\lambda}B=((1-\lambda)A^{-1}+\lambda B^{-1})^{-1},$
\item The weighted geometric mean $A\sharp_{\lambda} B=A^{\frac{1}{2}}\left(A^{-\frac{1}{2}}BA^{-\frac{1}{2}}\right)^{\lambda}A^{\frac{1}{2}},$
\end{itemize}
where $A,B\in\mathcal{M}_n^+$ and $0\leq \lambda\leq 1.$ When $\lambda=\frac{1}{2}$, we drop $\lambda$ from the above notations, and we simply write $\nabla,!$ and $\sharp.$\\
For two matrix means $\sigma, \tau$, we say that $\sigma\leq \tau$ if $A\sigma B\leq A\tau B$ for all $A,B\in\mathcal{M}_n^+.$ In particular, we have $!_{\lambda}\leq \sharp_{\lambda}\leq \nabla_{\lambda},$ \cite{ando_1}. That is, if $A,B\in\mathcal{M}_n^{+}$, then
$$A!_{\lambda} B\leq A\sharp_{\lambda} B\leq A  \nabla_{\lambda} B, 0\leq \lambda\leq 1.$$
Other celebrated relations for these means are 
$$A\nabla_{\lambda} B=B\nabla_{1-\lambda}A, A\sharp_{\lambda} B=B\sharp_{1-\lambda}A, A!_{\lambda} B=B!_{1-\lambda}A.$$ 
The theory of matrix means is strongly related to that of matrix monotone functions, where any mean $\sigma$ on $\mathcal{M}_n^{+}$ is characterized by
\begin{align*}
A\sigma B=A^{\frac{1}{2}}f\left(A^{-\frac{1}{2}}BA^{-\frac{1}{2}}\right)A^{\frac{1}{2}},
\end{align*}
for a certain matrix monotone function $f:(0,\infty)\to (0,\infty),$ with $f(1)=1.$ Recall that a function $f:J\to \mathbb{R}$ is said to be matrix monotone if it preserves matrix order. That is, if it satisfies
\begin{align*}
f(A)\leq f(B) \;{\text{whenever}}\;A\leq B,
\end{align*}
for the  Hermitian matrices $A,B$ whose spectra are in the interval $J.$

Theory of matrix means for positive matrices has been well developed and studied in the literature. We refer the reader to \cite{ando_1,kubo_ando,nishio_ando,pusz_wor} as a sample of articles treating this topic.

 A matrix $A\in\mathcal{M}_n$ is said to be accretive if its real part, defined by $\Re A=\frac{A+A^*}{2}$ is positive definite (i.e., $\Re A>0.$) This condition is equivalent to the fact that the numerical range $W(A)$ of $A$ satisfies
$$W(A):=\{\left<Ax,x\right>:x\in\mathbb{C}^n, \|x\|=1\}\subset {\text{the right half complex plane}}.$$ It is readily seen that the class of accretive matrices is a convex cone of $\mathcal{M}_n^+$ that is invariant under inversion.

When studying properties of accretive matrices, it is necessary to recall the definition of sectorial matrices.  For $0\leq \alpha<\frac{\pi}{2},$ we define the sector
$$ S_{\alpha}=\{z \in \mathbb{C}: \Re(z) > 0, |\Im(z)| \leq \tan(\alpha) \Re(z)  \}. $$
A matrix $A$ whose numerical range is a subset of a sector $S_{\alpha}$, for some $\alpha \in [0, \pi/2)$, is called a sectorial matrix. It is clear that a sectorial matrix is necessarily accretive.

Using the integral representation of a matrix monotone function, Hiai \cite{hiai_new} extended the definition of a matrix mean from positive to dissipative matrices. Recall that a matrix $A$ is said to be dissipative if $\Im A>0$. Thus, if $A$ is dissipative, then $-iA$ is accretive, and if $A$ is accretive then $iA$ is dissipative. The definition given in \cite{hiai_new} was motivated by the study of concavity of a certain trace functional, and no further properties were discussed neither in \cite{hiai_new} nor in later literature.

Related to our approach, Drury \cite{drury} extended the definition of geometric mean from the setting of positive definite matrices to the class of accretive matrices, where he defined the geometric mean of two accretive matrices $A,B \in\mathcal{M}_n$ by
\begin{align}\label{drury_def}
A\sharp B=\left( \frac{2}{\pi}\int_{0}^{\infty}(tA+t^{-1}B)^{-1}\frac{dt}{t}\right)^{-1}.
\end{align}
In the same paper, Drury discussed many interesting properties of this geometric mean and, in particular, he showed that when $A,B>0$, his definition coincides with  $A\sharp B=A^{\frac{1}{2}}\left(A^{-\frac{1}{2}}BA^{-\frac{1}{2}}\right)^{1/2}A^{\frac{1}{2}}.$

A little later, Ra\"{i}ssouli et. al. \cite{raissouli} presented the weighted geometric mean for two accretive matrices $A,B\in\mathcal{M}_n$ by the formula
\begin{align}\label{raissouli_def}
A\sharp_\lambda B=	\frac{\sin(\lambda\pi)}{\pi}\int_{0}^{\infty}t^{\lambda-1}(A^{-1}+tB^{-1})^{-1}dt, 0<\lambda<1.
\end{align}
In the same paper, the authors showed that when $\lambda=\frac{1}{2}$, Drury's definition given in \eqref{drury_def} coincides with \eqref{raissouli_def}. Further, they showed that when $A,B>0$, then their definition \eqref{raissouli_def} reduces to
\begin{align*}
A\sharp_\lambda B= A^{\frac{1}{2}}\left(A^{-\frac{1}{2}}BA^{-\frac{1}{2}}\right)^{\lambda}A^{\frac{1}{2}}, 0<\lambda<1;
\end{align*}
which is consistent with the definition of $\sharp_{\lambda}$ when $A,B\in\mathcal{M}_n^+$. We will show that this is also true for accretive matrices.

 We refer the reader to \cite{raissouli}, where many properties for the weighted geometric mean of accretive matrices  have been discussed and matched with the corresponding properties for positive ones.

Our first target in this article is to show that \eqref{raissouli_def} follows from a more general setting for matrix monotone functions and matrix means; which then leads us to a reasonable generalization of the concept of matrix means for accretive matrices. Further, we discuss Ando's and Choi's inequalities for accretive matrices.

For our purpose, we will need the following  notation and preliminaries.

A function $f:J\to\mathbb{R}$ is said to be matrix convex if it is continuous and $f\left((1-t)A+tB\right)\leq (1-t)f(A)+tf(B),$ for $0\leq t\leq 1$ and the Hermitian matrices  $A,B$ with spectra in $J$. If $-f$ is matrix convex, $f$ is called matrix concave. 

We adopt the notation 
$$ \mathfrak{m}=\{f:(0,\infty)\to (0,\infty); f\; {\text{is a matrix monotone function with}} \;f(1)=1\}.$$

Matrix monotone functions and matrix concave functions are strongly related, as follows \cite[Theorem 2.4]{ Uchiyama} and \cite[Theorems 2.1, 2.3, 3.1, 3.7]{ando_hiai}.
\begin{proposition}\label{oper_intro_prop}
Let $f:(0,\infty)\to [0,\infty)$ be continuous. Then 
\begin{enumerate}
\item[(i)] $f$ is matrix monotone decreasing if and only if $f$ is matrix convex and $f(\infty)<\infty$.
\item[(ii)] $f$ is matrix monotone increasing if and only if $f$ is matrix concave.
\end{enumerate}
\end{proposition}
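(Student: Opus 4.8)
The plan is to establish the four implications separately, using only elementary convexity manipulations for the two ``easy'' directions and invoking the classical integral representations (L\"{o}wner's theorem and its operator-convex analogue) for the two ``hard'' ones; along the way I will use the standard composition rule that the composite of a matrix convex, matrix monotone decreasing function with a matrix concave function is matrix convex, together with the fact that $x\mapsto 1/x$ is itself both matrix convex and matrix monotone decreasing on $(0,\infty)$. I will also record at the outset that a matrix monotone function on an open interval is automatically real-analytic, so that in (i) the degenerate possibility ``$f$ vanishes somewhere'' reduces to $f\equiv 0$, which is trivially covered; hence I may assume $f>0$ throughout the proof of (i).

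For the direction ``matrix concave $\Rightarrow$ matrix monotone increasing'' in (ii), take $0<A\le B$ and $t\in(0,1)$. Since $tA\le tB\le B$, the matrix $S:=(1-t)^{-1}(B-tA)$ is positive definite and $B=tA+(1-t)S$; matrix concavity gives $f(B)\ge tf(A)+(1-t)f(S)\ge tf(A)$, the last inequality being precisely where the hypothesis $f\ge 0$ enters. Letting $t\to 1^{-}$ yields $f(A)\le f(B)$. This is the only implication that needs nothing beyond the definitions.

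For ``matrix monotone increasing $\Rightarrow$ matrix concave'' in (ii), I would invoke L\"{o}wner's integral representation: every matrix monotone function $f$ on $(0,\infty)$ has the form $f(x)=a+bx+\int_{(0,\infty)}\frac{x(1+t)}{x+t}\,d\mu(t)$ with $a\in\mathbb{R}$, $b\ge 0$ and a positive measure $\mu$ (with the usual integrability). Since $\frac{x(1+t)}{x+t}=(1+t)\left(1-\frac{t}{x+t}\right)$ and $x\mapsto\frac1{x+t}$ is matrix convex on $(0,\infty)$, each integrand is matrix concave, and adding the affine part $a+bx$ preserves matrix concavity; hence $f$ is matrix concave. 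With (ii) available, ``$\Rightarrow$'' in (i) follows quickly: if $f$ is matrix monotone decreasing (and, as noted, strictly positive), then $1/f$ is matrix monotone increasing, hence matrix concave by (ii), and composing with the matrix convex, matrix monotone decreasing function $x\mapsto 1/x$ shows $f=1/(1/f)$ is matrix convex; moreover $f$ is in particular scalar-decreasing and bounded below by $0$, so $f(\infty)=\lim_{x\to\infty}f(x)$ exists and is finite. Finally, for ``$\Leftarrow$'' in (i), I would use the representation of matrix convex functions on $(0,\infty)$, namely $f(x)=\alpha+\beta(x-1)+\gamma(x-1)^2+\int_{[0,\infty)}\frac{(x-1)^2}{x+t}\,d\mu(t)$ with $\gamma\ge 0$; the hypotheses $f\ge 0$ and $f(\infty)<\infty$ force $\gamma=0$ and, using $\frac{(x-1)^2}{x+t}=x-2-t+\frac{(1+t)^2}{x+t}$, force the residual linear-in-$x$ growth to cancel, leaving $f(x)=f(\infty)+\int_{[0,\infty)}\frac{(1+t)^2}{x+t}\,d\mu(t)$, which is manifestly matrix monotone decreasing.

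The genuinely deep input --- and the only real obstacle --- is the implication ``matrix monotone increasing $\Rightarrow$ matrix concave'' (equivalently, the matrix-convexity half of ``$\Rightarrow$'' in (i)): that matrix monotonicity on $(0,\infty)$ by itself forces the L\"{o}wner/Pick integral form. Everything else is bookkeeping with composition rules and with deciding which terms of an integral representation grow at infinity. If one prefers a self-contained treatment to a citation of L\"{o}wner's theorem, one can instead reproduce Ando's $2\times 2$ block-matrix argument for ``matrix monotone $\Rightarrow$ matrix concave,'' which is elementary but technically involved; I would flag that as the step to handle with care.
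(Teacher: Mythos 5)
The paper does not prove this proposition at all: it is quoted as known background, with the proof delegated to the cited references (Uchiyama, and Ando--Hiai), so there is no argument of the authors' to compare yours against. What you have written is, in effect, the standard textbook proof that the paper chose to omit, and as a plan it is sound. Your elementary argument for ``matrix concave $\Rightarrow$ matrix monotone increasing'' (writing $B=tA+(1-t)S$ with $S=(1-t)^{-1}(B-tA)>0$, discarding the term $(1-t)f(S)\ge 0$, and letting $t\to 1^-$) is correct and is exactly where the hypothesis $f\ge 0$ is used; the converse via L\"{o}wner's representation, the reduction of (i) to (ii) through $f=1/(1/f)$ using the composition rule for a matrix convex decreasing outer function with a matrix concave inner function, and the disposal of the degenerate case $f\equiv 0$ via real-analyticity are all standard and correct. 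You also correctly identify the one genuinely deep input (L\"{o}wner's theorem, equivalently Ando's block-matrix argument) rather than pretending it is elementary.

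The only step I would press you on is the last one, the implication ``matrix convex with $f(\infty)<\infty$ $\Rightarrow$ matrix monotone decreasing.'' Splitting $\frac{(x-1)^2}{x+t}=x-2-t+\frac{(1+t)^2}{x+t}$ and integrating term by term silently assumes $\int t\,d\mu(t)<\infty$, which is not part of the representation theorem; it is the hypothesis $f(\infty)<\infty$ that forces this (via a Fatou-type argument on $f(x)/x$ and then on $f(x)-f(\infty)$), and that deduction should be made explicit before you rearrange the integral. This is a fixable bookkeeping point, not a wrong idea. Alternatively, the whole forward direction of (i) can be made cleaner by applying your argument to $f+\varepsilon$ (strictly positive, still matrix monotone decreasing) and letting $\varepsilon\to 0$, which avoids the analyticity digression.
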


The following characterization of $f\in\mathfrak{m}$ will be useful for our analysis.
\begin{lemma}\label{hansen_lem} (\cite[Theorem 4.9]{hansen}) 
Let $f\in \mathfrak{m}.$ Then
$$f(x)=\int_{0}^{1}(1!_tx)d\nu_f(t),$$ where $\nu_f$ is a probability measure on $[0,1].$
\end{lemma}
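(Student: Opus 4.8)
The plan is to recognize this lemma as Löwner's integral representation of operator monotone functions on $(0,\infty)$ recast in terms of harmonic means, and so to start from that representation and then perform the change of variables that identifies its elementary building blocks with the functions $1!_t x$. The deep, classical input I would invoke is that every operator monotone $f:(0,\infty)\to(0,\infty)$ can be written as
\begin{equation*}
f(x)=a+bx+\int_{(0,\infty)}\frac{(1+s)x}{x+s}\,d\mu(s),
\end{equation*}
with $a,b\ge 0$ and $\mu$ a positive Borel measure on $(0,\infty)$; moreover $\mu$ is necessarily finite, since for each fixed $x$ the kernel $s\mapsto\frac{(1+s)x}{x+s}$ is monotone in $s$ with limits $1$ at $s=0$ and $x$ at $s=\infty$, hence lies between $\min(1,x)$ and $\max(1,x)$. (This may be taken straight from Löwner's theorem, or obtained by first passing to the operator concave picture via Proposition \ref{oper_intro_prop}.)

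I would then absorb the affine part $a+bx$ into the integral. Because $\frac{(1+s)x}{x+s}\to 1$ as $s\to 0^{+}$ and $\frac{(1+s)x}{x+s}\to x$ as $s\to\infty$, one extends $\mu$ to a finite positive measure $\bar\mu$ on the compactified half-line $[0,\infty]$ by adding mass $a$ at $s=0$ and mass $b$ at $s=\infty$, with the continuous conventions that the kernel equals $1$ at $s=0$ and equals $x$ at $s=\infty$; this yields $f(x)=\int_{[0,\infty]}\frac{(1+s)x}{x+s}\,d\bar\mu(s)$. Evaluating at $x=1$, the kernel is identically $1$, so $f(1)=\bar\mu([0,\infty])$, and the normalization $f(1)=1$ forces $\bar\mu$ to be a probability measure. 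Finally the substitution $t=\frac{s}{1+s}$, a homeomorphism of $[0,\infty]$ onto $[0,1]$ with $s=0\leftrightarrow t=0$ and $s=\infty\leftrightarrow t=1$, turns the kernel into
\begin{equation*}
\frac{(1+s)x}{x+s}=\frac{x}{(1-t)x+t}=\big((1-t)\cdot 1^{-1}+t\,x^{-1}\big)^{-1}=1!_t x,
\end{equation*}
so the pushforward of $\bar\mu$ under $s\mapsto t$ is the required probability measure $\nu_f$ on $[0,1]$ and $f(x)=\int_0^1(1!_t x)\,d\nu_f(t)$.

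The only substantial point is the integral representation in the first step; after that the argument is just bookkeeping about the two endpoints and one change of variables, so I expect the representation itself --- essentially Löwner's theorem --- to be where all the work lies. A more self-contained alternative is Choquet theory: $\mathfrak m$ with the topology of pointwise convergence is a metrizable compact convex set (local equicontinuity comes from monotonicity together with $f(1)=1$), its extreme points are precisely the harmonic means $\{1!_t x:t\in[0,1]\}$, and then each $f\in\mathfrak m$ is the barycenter of a probability measure supported on this extreme boundary --- with the identification of the extreme points being the crux of that route. Either way, uniqueness of $\nu_f$ is not needed here, although it follows from the uniqueness of the representing measure in Löwner's theorem.
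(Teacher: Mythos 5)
Your proposal is correct. The paper does not prove this lemma at all --- it is quoted directly from Hansen \cite[Theorem 4.9]{hansen} --- and your argument (Löwner's integral representation $f(x)=a+bx+\int_{(0,\infty)}\frac{(1+s)x}{x+s}\,d\mu(s)$, absorption of the affine part as point masses at the endpoints of $[0,\infty]$, normalization via $f(1)=1$, and the substitution $t=s/(1+s)$ turning the kernel into $1!_t x$) is precisely the standard derivation and is essentially the content of the cited result; the algebra and the finiteness/normalization checks are all accurate.
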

 
 Consequently, if $f\in \mathfrak{m}$ and $\sigma_f$ is the corresponding mean (i.e., $A\sigma_f B=A^{\frac{1}{2}}f\left(A^{-\frac{1}{2}}BA^{-\frac{1}{2}}\right)A^{\frac{1}{2}}$), then it is not hard to show that for  $A,B\in\mathcal{M}_n^{+},$ we have
 \begin{align}\label{mean_har_prob_intro}
    A\sigma_fB=\int_{0}^{1}A!_{t}B\;d\nu_f(t),
\end{align} 
where $\nu_f$ is a probability measure on $[0,1];$ depending on $f$.

In our discussion, we will need to deal with $f(z)$ where $z\in \mathbb{C}.$ We first recall the following  celebrated result of L\"{o}wener about matrix monotone functions. 
\begin{lemma}\cite[Theorem V.4.7]{bhatia_matrix}
Let $f\in\mathfrak{m}$. Then $f$ has an analytic continuation to $\mathbb{C}\backslash (-\infty,0]$.
\end{lemma}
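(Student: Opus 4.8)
The plan is to take the integral representation of $f\in\mathfrak m$ furnished by Lemma~\ref{hansen_lem} and to analytically continue it by simply replacing the real variable with a complex one inside the integrand. Writing the harmonic mean out, $1!_t x=\bigl((1-t)+tx^{-1}\bigr)^{-1}=x/\bigl(t+(1-t)x\bigr)$, so Lemma~\ref{hansen_lem} reads
$$f(x)=\int_{0}^{1}\frac{x}{t+(1-t)x}\,d\nu_f(t),\qquad x>0,$$
with $\nu_f$ a probability measure on $[0,1]$. The natural candidate for the continuation is then
$$\widetilde f(z)=\int_{0}^{1}\frac{z}{t+(1-t)z}\,d\nu_f(t),\qquad z\in G:=\mathbb{C}\setminus(-\infty,0],$$
and the whole statement reduces to checking that this formula defines a holomorphic function on $G$ whose restriction to $(0,\infty)$ is $f$.

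I would then dispatch three points. First, for each fixed $t\in[0,1]$ the rational function $z\mapsto z/(t+(1-t)z)$ has all its poles (none when $t=1$, a single simple pole at $-t/(1-t)$ when $t\in(0,1)$) contained in $(-\infty,0]$, hence is holomorphic on $G$. Second, the parameter integral converges locally uniformly on $G$: on any compact $K\subset G$ there is a lower bound $|t+(1-t)z|\ge c_K>0$ valid for all $t\in[0,1]$ and all $z\in K$, so that $\bigl|z/(t+(1-t)z)\bigr|\le (\sup_{K}|z|)/c_K$ uniformly in $t$, and integrability is then immediate because $\nu_f$ is a probability measure. Third, this uniform bound lets one run Morera's theorem together with Fubini's theorem — integrate $\widetilde f$ over the boundary of an arbitrary triangle lying in $G$ and exchange the two integrals — to conclude that $\widetilde f$ is holomorphic on $G$; equivalently one may just quote the standard theorem on holomorphy of integrals depending analytically on a parameter. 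Since $\widetilde f$ agrees with $f$ on $(0,\infty)$ by Lemma~\ref{hansen_lem}, and $(0,\infty)$ has accumulation points in $G$, $\widetilde f$ is the (unique) analytic continuation of $f$ to $G$.

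The only step that requires an actual argument is the lower bound $|t+(1-t)z|\ge c_K>0$, and I expect it to be the main — essentially the sole — obstacle, although it is a mild one. I would argue by compactness: if the bound failed there would be $t_n\in[0,1]$ and $z_n\in K$ with $t_n+(1-t_n)z_n\to0$; passing to convergent subsequences $t_n\to t^{\ast}\in[0,1]$ and $z_n\to z^{\ast}\in K$ yields $t^{\ast}+(1-t^{\ast})z^{\ast}=0$, which forces either $t^{\ast}=1$ (so $1=0$, impossible) or $z^{\ast}=-t^{\ast}/(1-t^{\ast})\in(-\infty,0]$, contradicting $z^{\ast}\in K\subset G$. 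Geometrically this is just the elementary fact that the line segment joining $z$ to $1$ in $\mathbb{C}$ never passes through $0$ when $z\notin(-\infty,0]$, upgraded to a uniform version via compactness of $[0,1]\times K$.

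For completeness I would remark that the identical scheme works starting instead from the classical Löwner (Nevanlinna--Pick) representation $f(z)=\alpha+\beta z+\int_{0}^{\infty}\frac{(1+\lambda)z}{\lambda+z}\,d\mu(\lambda)$ with $\alpha\in\mathbb{R}$, $\beta\ge0$ and $\mu$ a finite positive measure: each summand is holomorphic off $(-\infty,0]$, the integral again converges locally uniformly there, and Morera--Fubini finishes the proof. Basing the argument on Lemma~\ref{hansen_lem} is simply more economical here, since that representation is already available in the paper.
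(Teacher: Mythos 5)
Your proof is correct, but it takes a different route from the paper, which offers no proof of this lemma at all: the statement is simply quoted from Bhatia's book as Löwner's theorem. What you do instead is construct the continuation explicitly by complexifying Hansen's integral representation from Lemma \ref{hansen_lem}, and your three steps (holomorphy of $z\mapsto z/(t+(1-t)z)$ off $(-\infty,0]$, the uniform lower bound $|t+(1-t)z|\ge c_K$ on compacts via the segment-from-$z$-to-$1$ observation, and Morera--Fubini plus the identity theorem) are all sound; the pole bookkeeping at $t=0,1$ and the compactness argument for $c_K$ check out. It is worth pointing out that this is essentially the content \emph{and} the method of the paper's own Proposition \ref{prop_our_hansen}, which is proved immediately after the cited lemma by exactly the Morera--Fubini device you describe (the paper integrates over circles and is rather more casual about the uniform domination, which your $c_K$ bound supplies more carefully). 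So your argument effectively merges the cited lemma and Proposition \ref{prop_our_hansen} into a single self-contained proof, at the cost of resting the existence of the continuation on Hansen's representation rather than on the classical Löwner theory; since the paper already takes Lemma \ref{hansen_lem} as a black box (and Hansen's derivation of it does not presuppose the analytic continuation), there is no circularity. The only caveat is that what you prove is the holomorphy of the specific extension $\widetilde f$; that this is \emph{the} analytic continuation is exactly the identity-theorem remark you make, so nothing is missing.
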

Notice that when $f\in\mathfrak{m},$ the integral representation in Lemma \ref{hansen_lem} applies when $x\in (0,\infty).$ If we use $f$ to denote the analytic continuation of $f$ to $\mathbb{C}\backslash (-\infty,0]$, we have the following.
\begin{proposition}\label{prop_our_hansen}
Let $f\in\mathfrak{m}$. Then, for $z\in\mathbb{C}\backslash (-\infty,0]$, the integral representation
$$f(z)=\int_{0}^{1}1!_tz\;d\nu_f(t),$$ holds true, where $\nu_f$ is as in Lemma \ref{hansen_lem}.
\end{proposition}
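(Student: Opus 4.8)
The plan is to deduce the representation from the already–known case $z\in(0,\infty)$ (Lemma \ref{hansen_lem}) by analytic continuation. Write $\Omega:=\mathbb{C}\setminus(-\infty,0]$; this is an open connected set containing $(0,\infty)$, and $(0,\infty)$ has accumulation points in $\Omega$. By L\"owner's theorem (the lemma of Bhatia quoted just above) the left-hand side $f$ is analytic on $\Omega$, and it is precisely this analytic continuation that we denote by $f$. Hence, by the identity theorem, it suffices to prove that
$$g(z):=\int_{0}^{1}(1!_t z)\,d\nu_f(t)=\int_{0}^{1}\frac{z}{(1-t)z+t}\,d\nu_f(t)$$
defines an analytic function on $\Omega$; once this is done, $f$ and $g$ are analytic on the connected set $\Omega$ and agree on $(0,\infty)$ by Lemma \ref{hansen_lem}, so $f\equiv g$ on $\Omega$.

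To establish analyticity of $g$, first note that for $t\in[0,1)$ the scalar $(1-t)z+t$ vanishes only at $z=-t/(1-t)\in(-\infty,0]$, while for $t=1$ it equals $1$; thus $(z,t)\mapsto z/((1-t)z+t)$ is well defined and jointly continuous on $\Omega\times[0,1]$ (at $t=1$ the integrand is simply $z$). In particular, for every compact $K\subset\Omega$ the integrand is bounded on $K\times[0,1]$, so $g$ is well defined and, by dominated convergence, continuous on $\Omega$. For analyticity I would apply Morera's theorem: given a closed triangle $\Delta\subset\Omega$, the integrand is bounded and continuous on the compact set $\partial\Delta\times[0,1]$, so Fubini's theorem gives
$$\int_{\partial\Delta}g(z)\,dz=\int_{0}^{1}\left(\int_{\partial\Delta}\frac{z}{(1-t)z+t}\,dz\right)d\nu_f(t).$$
For each fixed $t$ the map $z\mapsto z/((1-t)z+t)$ is holomorphic on a neighborhood of $\Delta$, since its only pole lies on $(-\infty,0]$ and hence outside $\Delta$; by Cauchy's theorem the inner integral vanishes, so $\int_{\partial\Delta}g=0$. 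Morera's theorem then shows $g$ is analytic on $\Omega$, and the identity theorem finishes the argument as above.

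I do not anticipate a serious obstacle. The only points requiring a little care are the verification that the poles $-t/(1-t)$ of the scalar harmonic means all lie on the deleted half-line $(-\infty,0]$ (so the integrand has no singularity on $\Omega$), and the uniform control of the integrand near $t=1$ needed to invoke Fubini; both are routine. Everything else is the standard ``integrate under the integral sign plus identity theorem'' template for transporting integral representations of matrix monotone (Pick) functions from $(0,\infty)$ to the cut plane.
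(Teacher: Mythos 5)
Your proposal is correct and follows essentially the same route as the paper: show the integral defines an analytic function on $\mathbb{C}\setminus(-\infty,0]$ via a Fubini--Morera argument and then invoke the identity theorem against Lemma \ref{hansen_lem} on $(0,\infty)$. If anything, your version is slightly cleaner, since you use triangles (the standard Morera hypothesis) rather than circles and you explicitly locate the poles $-t/(1-t)$ of $z\mapsto 1!_tz$ on the cut.
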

\begin{proof}
Notice that when $z\not\in (-\infty,0]$, the quantity $1!_tz$ is well defined. For such $z$, define $g(z)=\int_{0}^{1}1!_tz\;d\nu_f(t).$ We show that $f=g.$\\
We  show  that $g$ is analytic in $\mathbb{C}\backslash (-\infty,0].$ Indeed, let $\Gamma$ be any closed circle in $\mathbb{C}\backslash (-\infty,0].$ First, we show that 
\begin{align*}
\int_{\Gamma}\left(\int_{0}^{1}1!_tz\;d\nu_f(t)\right)dz&=\int_{0}^{1}\left(\int_{\Gamma}1!_tz\;dz\right)d\nu_f(t).
\end{align*}

Notice that (by letting $z=a+re^{i\theta}$)
\begin{align*}
\int_{0}^{2\pi}\int_{0}^{1}\left|1!_t(a+re^{i\theta})\right|d\nu_f(t)rd\theta&=r\int_{0}^{1}\int_{0}^{2\pi}\left|1!_t(a+re^{i\theta})\right|d\theta d\nu_f(t).
\end{align*}
But the function $F(t,\theta)=\left|1!_t(a+re^{i\theta})\right|$ is continuous on the compact set $[0,1]\times [0,2\pi]$. Therefore, $s:=\sup_{(t,\theta)}F(t,\theta)<\infty$, and hence
\begin{align*}
\int_{0}^{2\pi}\int_{0}^{1}\left|1!_t(a+re^{i\theta})\right|d\nu_f(t)rd\theta&\leq 2\pi s<\infty.
\end{align*}
This means that
\begin{align*}
\int_{0}^{2\pi}\int_{0}^{1}\left(1!_t(a+re^{i\theta})\right)d\nu_f(t)rd\theta&=r\int_{0}^{1}\int_{0}^{2\pi}\left(1!_t(a+re^{i\theta})\right)d\theta d\nu_f(t),
\end{align*}
which is equivalent to
$$\int_{\Gamma}\left(\int_{0}^{1}1!_tz\;d\nu_f(t)\right)dz=\int_{0}^{1}\left(\int_{\Gamma}1!_tz\;dz\right)d\nu_f(t).$$

Then
\begin{align*}
\int_{\Gamma}g(z)dz&=\int_{\Gamma}\left(\int_{0}^{1}1!_tz\;d\nu_f(t)\right)dz\\
&=\int_{0}^{1}\left(\int_{\Gamma}1!_tz\;dz\right)d\nu_f(t)\\
&=0,
\end{align*}
where we have used the fact that $z\mapsto 1!_tz$ is analytic in $\mathbb{C}\backslash (-\infty,0]$, for every $t\in [0,1].$ Since $\int_{\Gamma}g(z)dz=0$ for any circle, in the domain, it follows that $g$ is analytic in $\mathbb{C}\backslash (-\infty,0]$. Finally, since $f$ and $g$ are analytic functions having the same values in $(0,\infty),$ it follows that $f=g$. This completes the proof.
\end{proof}

Our first result will be to show that \eqref{drury_def} and \eqref{raissouli_def}   follow as an application of the above computations. That is, we show that when $0<\lambda<1$, a probability measure $\nu_{\lambda}$ on $[0,1]$ exists such that for two accretive matrices $A,B$, 
\begin{align*}
\int_{0}^{1}A!_t B\; d\nu_{\lambda}(t)=\frac{\sin (\lambda\pi)}{\pi}\int_{0}^{\infty}t^{\lambda-1}(A^{-1}+tB^{-1})^{-1}dt.
\end{align*}

Once this has been shown, we introduce the definition of matrix means for accretive matrices in the setting of arbitrary matrix monotone functions, then we extend the study in the same theme to the discussion of Ando's and Choi's inequalities. Recall that these inequalities state \cite {Ando,ando_1, choi,kubo_ando}, respectively,
\begin{align}\label{andos_inequality_intro}
\Phi(A\sigma_f B)\leq \Phi(A)\sigma_f \Phi(B),
\end{align}
and
\begin{align}\label{chois_inequality_intro}
\Phi(f(A))\leq f(\Phi(A)),
\end{align}
whenever $\Phi:\mathcal{M}_n\to\mathcal{M}_r$ is a unital positive linear mapping, $A,B\in \mathcal{M}_n$ are positive  and $f:(0,\infty)\to (0,\infty)$ is a matrix monotone function. In this context, recall that a linear mapping $\Phi:\mathcal{M}_n\to\mathcal{M}_r$ is called positive if it preserves positive matrices (i.e., if $\Phi(A)\geq 0$ when $A\geq 0$) and it is called unital if $\Phi(\mathcal{I})=\mathcal{I}.$ In fact the assumption of $\Phi$ being unital is not necessary for
\eqref{andos_inequality_intro} while necessary for \eqref{chois_inequality_intro}

So, we will show the accretive versions of both \eqref{andos_inequality_intro} and \eqref{chois_inequality_intro}. For this to be accomplished, we need to remind the reader of the meaning of $f(A)$, when $A$ is a general matrix.

 Let $f:D\to\mathbb{C}$ be an analytic complex function on the domain $D.$ The Cauchy integral formula assures that for $a\in D,$
\begin{align*}
    f(a)=\frac{1}{2\pi i}\int_{\Gamma}\frac{f(z)}{z-a}dz,
\end{align*}
where $\Gamma$ is a simple closed curve in $D$ that winds once around $a$. Extending this definition to matrices (or operators in general) is made using the Dunford integral
\begin{align}\label{dunford_intro}
    f(A)=\frac{1}{2\pi i}\int_{\Gamma} f(z)(z\mathcal{I}-A)^{-1}dz,
\end{align}
where $\Gamma$ is a simple closed curve in the resolvent of $A$ that winds once around each eigenvalue of $A$ and lies entirely inside $D$. Of course $ D$ contains the spectrum of $A$.

For example, letting $f:\mathbb{C}\backslash (-\infty,0] \to \mathbb{C}$ be $f(z)=z^{\lambda}, 0<\lambda<1,$ we define 
\begin{align}\label{def_frac_power_intro}
    A^{\lambda}=\frac{1}{2\pi i}\int_{\Gamma}z^{\lambda}(z\mathcal{I}-A)^{-1}dz,
\end{align}
 where $\Gamma$ is any closed curve avoiding $(-\infty,0]$ in the resolvent of $A$, so that $\Gamma$ winds once around each eigenvalue of $A.$

So, fractional powers are not only defined for positive matrices. They can be defined for any matrix whose eigenvalues are not in $(-\infty,0]$. 

In the sequel, we prove that for accretive matrices, the above Dunford integral may be replaced by a harmonic-mean integral. This approach will enable us to achieve our target. In fact Theorem \ref{thm_main_1} provides an alternative formula for any matrix with eigenvalues avoiding $(-\infty,0]$.

The organization of this paper is as follows. First, we list some lemmas that we will need in our analysis. Then, the geometric mean for accretive matrices is studied further. Once the geometric mean is settled, we introduce the alternative formula for $f(A)$, when $A$ is accretive, then the definition of arbitrary  matrix mean for accretive matrices will be presented. After that numerous applications that involve generalizations of several results from the setting of positive  to  accretive matrices will be presented.

While this article treats accretive matrices, it will be noticed that the corresponding results for positive ones will be special cases of our results. This means that this article can be  viewed as an exposition for celebrated inequalities of positive matrices.
\section{Some preliminary results}
In this section we list  different results that we will need in the sequel. These results can be found in the stated references. 

Further, one goal of this paper is to extend most of these results from the setting of positive matrices to accretive ones. So, to make it easier for the reader, we will mention the corresponding result from the subsequent sections that extends the stated result. We begin with the following version of the celebrated Jensen inequality.
\begin{lemma}\cite{Furuta} \label{f<>} Let $A\in\mathcal{M}_n^+$. Then for  $f\in\mathfrak{m}$  and any unit vector $x$, 
\begin{align}
\left\langle f(A)x,x\right\rangle \leq f\left( \left\langle Ax,x\right\rangle \right) .
\end{align}
\end{lemma}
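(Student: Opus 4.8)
The plan is to reduce this matrix inequality to an elementary scalar estimate via the integral representation of Lemma~\ref{hansen_lem}. First I would record that, since $f\in\mathfrak{m}$, applying the functional calculus to the representation $f(x)=\int_0^1(1!_tx)\,d\nu_f(t)$ gives the matrix identity
\begin{align*}
f(A)=\int_0^1(\mathcal{I}!_tA)\,d\nu_f(t),\qquad A\in\mathcal{M}_n^+,
\end{align*}
which is \eqref{mean_har_prob_intro} in the special case where the first matrix is $\mathcal{I}$ (so that $\mathcal{I}\sigma_fA=f(A)$), with $\nu_f$ a probability measure on $[0,1]$. Taking the quadratic form at a unit vector $x$ and pushing it inside the finite-measure integral yields $\langle f(A)x,x\rangle=\int_0^1\langle(\mathcal{I}!_tA)x,x\rangle\,d\nu_f(t)$.

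Next I would establish the pointwise bound $\langle(\mathcal{I}!_tA)x,x\rangle\le 1!_t\langle Ax,x\rangle$ for each $t\in[0,1]$. For $0<t<1$ one has the resolvent identity $\mathcal{I}!_tA=\bigl((1-t)\mathcal{I}+tA^{-1}\bigr)^{-1}=\tfrac{1}{1-t}\mathcal{I}-\tfrac{t}{1-t}\bigl((1-t)A+t\mathcal{I}\bigr)^{-1}$, so that $\langle(\mathcal{I}!_tA)x,x\rangle=\tfrac{1}{1-t}-\tfrac{t}{1-t}\bigl\langle\bigl((1-t)A+t\mathcal{I}\bigr)^{-1}x,x\bigr\rangle$. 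Applying the Cauchy--Schwarz inequality to $B^{1/2}x$ and $B^{-1/2}x$, where $B=(1-t)A+t\mathcal{I}>0$, gives $\langle B^{-1}x,x\rangle\ge\langle Bx,x\rangle^{-1}=\bigl((1-t)\langle Ax,x\rangle+t\bigr)^{-1}$; inserting this lower bound and simplifying produces precisely $\tfrac{\langle Ax,x\rangle}{(1-t)\langle Ax,x\rangle+t}=1!_t\langle Ax,x\rangle$ (the cases $t=0,1$ being trivial).

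Finally I would integrate the pointwise bound against $\nu_f$ and invoke Lemma~\ref{hansen_lem} a second time, now with the positive scalar $\langle Ax,x\rangle$ in place of the argument $x$:
\begin{align*}
\langle f(A)x,x\rangle\le\int_0^1\bigl(1!_t\langle Ax,x\rangle\bigr)\,d\nu_f(t)=f\bigl(\langle Ax,x\rangle\bigr),
\end{align*}
which is the claim. I do not expect a serious obstacle here; the two points needing a modicum of care are the passage from Lemma~\ref{hansen_lem} to the matrix identity for $f(A)$ and the interchange of the quadratic form with the integral (both immediate in finite dimensions, since $\nu_f$ is finite and $t\mapsto\mathcal{I}!_tA$ is norm-continuous), together with keeping the Cauchy--Schwarz step oriented the right way. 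An alternative, less self-contained route would be to note that $f$ is matrix concave by Proposition~\ref{oper_intro_prop}(ii) and quote the operator Jensen inequality directly.
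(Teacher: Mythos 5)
Your argument is correct, but it is worth noting that the paper offers no proof of Lemma \ref{f<>} at all: it is quoted verbatim from \cite{Furuta} as background, so there is nothing internal to compare against step by step. What you have produced is a self-contained derivation, and each step checks out: the identity $f(A)=\int_0^1(\mathcal{I}!_tA)\,d\nu_f(t)$ is the $\mathcal{I}\sigma_fA$ case of \eqref{mean_har_prob_intro}; the partial-fraction rewriting $\mathcal{I}!_tA=\tfrac{1}{1-t}\mathcal{I}-\tfrac{t}{1-t}\bigl((1-t)A+t\mathcal{I}\bigr)^{-1}$ is a valid identity among commuting functions of $A$ (and the excluded endpoints $t=0,1$ give equality directly); the Cauchy--Schwarz estimate $\langle B^{-1}x,x\rangle\ge\langle Bx,x\rangle^{-1}$ is oriented correctly, and the negative coefficient $-\tfrac{t}{1-t}$ flips it into the needed upper bound $\langle(\mathcal{I}!_tA)x,x\rangle\le 1!_t\langle Ax,x\rangle$; and the final integration against the probability measure $\nu_f$, together with Lemma \ref{hansen_lem} evaluated at the positive scalar $\langle Ax,x\rangle$, closes the argument. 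Your route is in fact more in the spirit of the paper than the citation itself: reducing a statement about $\sigma_f$ or $f(A)$ to the harmonic mean via $\nu_f$ and then controlling the harmonic mean pointwise is exactly the template the authors use throughout Sections 5--7 (e.g., in Proposition \ref{r_a_sigma_b>} and Theorem \ref{choi_davis_accretive}). The alternative you mention --- concavity of $f$ via Proposition \ref{oper_intro_prop}(ii) plus the operator Jensen inequality for the unital positive map $X\mapsto\langle Xx,x\rangle$ --- is the standard textbook proof and is shorter, but it leans on \eqref{chois_inequality_intro}, whereas your version needs only Lemma \ref{hansen_lem} and Cauchy--Schwarz.
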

We refer the reader to Corollary \ref{theorem_inner_product} below for the extension of this result to accretive or sectorial matrices.
\begin{lemma}\cite{Ando_2} \label{<sigma_f>} Let $A, B\in\mathcal{M}_n^+$. If $f\in \mathfrak{m}$,  then for any unit vector  $x$,
\begin{align}
\left\langle (A\sigma_f B)x,x\right\rangle \leq \left\langle Ax,x\right\rangle \sigma_f\left\langle Bx,x\right\rangle .
\end{align} 
\end{lemma}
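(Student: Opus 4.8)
The plan is to deduce the inequality from the scalar Jensen-type estimate of Lemma~\ref{f<>}. Fix a unit vector $x$ and set $C=A^{-\frac12}BA^{-\frac12}$ and $y=A^{\frac12}x$. Since $A,B\in\mathcal{M}_n^{+}$, the matrix $C$ is positive definite and $\|y\|^{2}=\langle Ax,x\rangle>0$, so $u:=y/\|y\|$ is a unit vector. From the defining identity $A\sigma_f B=A^{\frac12}f(C)A^{\frac12}$ we get
\begin{align*}
\langle (A\sigma_f B)x,x\rangle=\langle f(C)A^{\frac12}x,A^{\frac12}x\rangle=\langle f(C)y,y\rangle=\|y\|^{2}\,\langle f(C)u,u\rangle .
\end{align*}

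Next I would apply Lemma~\ref{f<>} to the positive definite matrix $C$ and the unit vector $u$ to obtain $\langle f(C)u,u\rangle\le f(\langle Cu,u\rangle)$, whence
\begin{align*}
\langle (A\sigma_f B)x,x\rangle\le\|y\|^{2}\,f(\langle Cu,u\rangle)=\langle Ax,x\rangle\,f\!\left(\frac{\langle Cy,y\rangle}{\|y\|^{2}}\right).
\end{align*}
A direct computation gives $\langle Cy,y\rangle=\langle A^{-\frac12}BA^{-\frac12}A^{\frac12}x,A^{\frac12}x\rangle=\langle Bx,x\rangle$, so the right-hand side equals $\langle Ax,x\rangle\,f\!\left(\langle Bx,x\rangle/\langle Ax,x\rangle\right)$. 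Since for positive scalars $a,b$ one has $a\sigma_f b=a^{\frac12}f(a^{-\frac12}ba^{-\frac12})a^{\frac12}=a\,f(b/a)$, this is exactly $\langle Ax,x\rangle\,\sigma_f\,\langle Bx,x\rangle$, which completes the argument.

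I do not expect any serious obstacle here: the only points needing care are that $C$ is genuinely positive definite so that Lemma~\ref{f<>} applies, the homogeneity step $\langle f(C)y,y\rangle=\|y\|^{2}\langle f(C)u,u\rangle$, and the (immediate) identification of the scalar mean $\langle Ax,x\rangle\sigma_f\langle Bx,x\rangle$ with $\langle Ax,x\rangle f(\langle Bx,x\rangle/\langle Ax,x\rangle)$. As an alternative, one can bypass Lemma~\ref{f<>} via the integral representation \eqref{mean_har_prob_intro}: writing $A\sigma_f B=\int_{0}^{1}A!_t B\,d\nu_f(t)$ and, likewise, $\langle Ax,x\rangle\sigma_f\langle Bx,x\rangle=\int_{0}^{1}\langle Ax,x\rangle!_t\langle Bx,x\rangle\,d\nu_f(t)$ reduces everything to the harmonic-mean case $\langle (A!_tB)x,x\rangle\le\langle Ax,x\rangle!_t\langle Bx,x\rangle$ for each $t\in[0,1]$, which in turn follows from the variational (infimum) characterization of the parallel sum by testing on the vectors $sx$ and $(1-s)x$ and minimizing over $s$.
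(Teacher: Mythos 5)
Your argument is correct, and it is worth noting that the paper itself offers no proof of this lemma at all: it is quoted verbatim from Ando's survey \cite{Ando_2} in the preliminaries section, so there is nothing internal to compare against. Your main route is the standard one and is sound: the reduction $\langle (A\sigma_f B)x,x\rangle=\|y\|^{2}\langle f(C)u,u\rangle$ with $C=A^{-1/2}BA^{-1/2}$, $y=A^{1/2}x$ is legitimate because $C>0$ and $\|y\|^{2}=\langle Ax,x\rangle>0$ for a unit vector $x$ and $A>0$; Lemma~\ref{f<>} then applies to $C$ and $u$, and the computation $\langle Cy,y\rangle=\langle Bx,x\rangle$ together with the scalar identity $a\sigma_f b=af(b/a)$ closes the argument. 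The one hypothesis you rely on implicitly is that Lemma~\ref{f<>} is available for all matrix monotone $f\in\mathfrak{m}$ (equivalently, by Proposition~\ref{oper_intro_prop}, for matrix concave $f\ge 0$), which is exactly how the paper states it, so there is no gap. Your alternative route through the representation \eqref{mean_har_prob_intro} and the variational characterization of the parallel sum is also valid and arguably more in the spirit of the rest of the paper, which repeatedly reduces statements about $\sigma_f$ to the harmonic means $!_t$ and integrates against $\nu_f$; either version would serve as a self-contained proof of the quoted lemma.
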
 
In Corollary \ref{theorem_sigma_inner} below, we present  the extension of this result to accretive or sectorial matrices.\\
Recall that a norm $\|\cdot\|$ on $\mathcal{M}_n$ is said to be unitarily invariant if it satisfies $\|UAV\|=\|A\|$ for any $A,U,V\in\mathcal{M}_n$ such that $U$ and $V$ are unitary matrices.
\begin{lemma}\cite{Ando_2}\label{sigma_norm} Let $A, B\in\mathcal{M}_n^+$. If $f\in \mathfrak{m}$, then for any unitarily invariant norm $ \parallel\cdot\parallel,$
\begin{align}
\parallel A \sigma_f B\parallel\leq \parallel A\parallel\sigma_f\parallel B\parallel.
\end{align}
\end{lemma}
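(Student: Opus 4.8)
The plan is to reduce the statement, in two steps, to a scalar inequality for $f$, exploiting that a unitarily invariant norm is determined by the Ky Fan norms $\|X\|_{(k)}=\sum_{j=1}^{k}s_{j}(X)$, where $s_1(X)\ge\cdots\ge s_n(X)$ are the singular values of $X$.

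\textbf{Step 1: the Ky Fan case.} Fix $k$ and let $P$ be the orthogonal projection onto a $k$-dimensional eigenspace of $A\sigma_f B$ corresponding to its $k$ largest eigenvalues. I would apply Ando's inequality \eqref{andos_inequality_intro} to the positive linear functional $X\mapsto\operatorname{tr}(XP)$ on $\mathcal{M}_n$ (which is not unital, but unitality is not needed for \eqref{andos_inequality_intro}) to obtain
$$
\|A\sigma_f B\|_{(k)}=\operatorname{tr}\left((A\sigma_f B)P\right)\ \le\ \operatorname{tr}(AP)\,\sigma_f\,\operatorname{tr}(BP),
$$
the first equality being Ky Fan's maximum principle. (The same bound also follows by summing Lemma \ref{<sigma_f>} over an orthonormal basis of the range of $P$ and using superadditivity of the scalar mean.) Since $\operatorname{tr}(AP)\le\|A\|_{(k)}$ and $\operatorname{tr}(BP)\le\|B\|_{(k)}$, it remains to note that the scalar map $(a,b)\mapsto a\,\sigma_f\,b:=af(b/a)$ — the perspective of $f$ — is concave and positively homogeneous of degree one (as $f$ is operator, hence ordinary, concave, by Proposition \ref{oper_intro_prop}) and is nondecreasing in each argument (the Kubo--Ando monotonicity axiom, restricted to $1\times1$ matrices). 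Monotonicity of the scalar mean then gives
$$
\|A\sigma_f B\|_{(k)}\ \le\ \|A\|_{(k)}\,\sigma_f\,\|B\|_{(k)},\qquad 1\le k\le n.
$$

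\textbf{Step 2: a general unitarily invariant norm.} Let $\Phi$ be the symmetric gauge function of $\|\cdot\|$ and $\Phi'$ its dual gauge, so that $\|X\|=\sup\{\langle s(X),y\rangle:\ \Phi'(y)\le1,\ y_1\ge\cdots\ge y_n\ge0\}$ for $X\ge0$, with $s(X)=(s_1(X),\dots,s_n(X))$. Fix such a $y$ and set $y_{n+1}:=0$. Abel summation gives $\langle s(X),y\rangle=\sum_{k=1}^{n}(y_k-y_{k+1})\,\|X\|_{(k)}$ for every $X\ge0$; in particular $\sum_{k}(y_k-y_{k+1})\|A\|_{(k)}=\langle s(A),y\rangle$ and similarly for $B$. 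The weights $y_k-y_{k+1}$ are nonnegative with total mass $y_1$, so by Step 1 followed by Jensen's inequality for the jointly concave, positively homogeneous mean,
$$
\langle s(A\sigma_f B),y\rangle\ \le\ \sum_{k}(y_k-y_{k+1})\left(\|A\|_{(k)}\,\sigma_f\,\|B\|_{(k)}\right)\ \le\ \langle s(A),y\rangle\,\sigma_f\,\langle s(B),y\rangle\ \le\ \|A\|\,\sigma_f\,\|B\|,
$$
where the last inequality uses $\langle s(A),y\rangle\le\Phi(s(A))\Phi'(y)\le\|A\|$, $\langle s(B),y\rangle\le\|B\|$, and monotonicity of the scalar mean. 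Taking the supremum over admissible $y$ yields $\|A\sigma_f B\|\le\|A\|\,\sigma_f\,\|B\|$.

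I expect Step 2 to be the crux: it rests on the standard but slightly delicate fact that a unitarily invariant norm is recovered from its Ky Fan norms through the dual symmetric gauge, and this has to be dovetailed with Jensen's inequality for the perspective of $f$. Alternatively one can localize the difficulty by first invoking Hansen's representation (Lemma \ref{hansen_lem}) and \eqref{mean_har_prob_intro} to write $A\sigma_f B=\int_{0}^{1}A!_t B\,d\nu_f(t)$ and $\|A\|\,\sigma_f\,\|B\|=\int_{0}^{1}\left(\|A\|\,!_t\,\|B\|\right)d\nu_f(t)$, so that by the triangle inequality it suffices to treat the weighted harmonic mean $\sigma_f=\,!_t$; but the Ky-Fan-to-norm passage of Step 2 is still required in that case.
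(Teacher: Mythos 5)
Your argument is correct. Note first that the paper does not prove this lemma at all: it is imported verbatim from Ando's 1994 survey, so there is no "paper proof" to match against. What you give is essentially the classical argument for this result, and it holds together. Step 1 is sound: the Ky Fan maximum principle plus Ando's inequality for the (non-unital, strictly positive) functional $X\mapsto\operatorname{tr}(XP)$ — or, as you note, summing Lemma \ref{<sigma_f>} over an orthonormal basis of $\operatorname{ran}P$ and using superadditivity of the perspective — gives $\|A\sigma_f B\|_{(k)}\le\|A\|_{(k)}\sigma_f\|B\|_{(k)}$, and the monotonicity of $(a,b)\mapsto af(b/a)$ in each variable (which you can justify either from the integral representation $a\sigma_f b=\int_0^1 a!_t b\,d\nu_f(t)$ or from $f(x)-xf'(x)\ge f(0^+)\ge0$ for concave $f>0$) is exactly what is needed to replace $\operatorname{tr}(AP),\operatorname{tr}(BP)$ by the Ky Fan norms. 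Step 2 is indeed the crux, since Fan dominance does not apply directly when the majorant depends on $k$; your resolution via the dual gauge, Abel summation with nonnegative weights $y_k-y_{k+1}$, and superadditivity of the jointly concave, degree-one homogeneous perspective (concavity plus homogeneity give $g(u+v)\ge g(u)+g(v)$, hence $\sum_k w_k g(a_k,b_k)\le g(\sum_k w_ka_k,\sum_k w_kb_k)$) is correct, and the final H\"older step $\langle s(A),y\rangle\le\Phi(s(A))\Phi'(y)\le\|A\|$ closes the loop. Your concluding remark that the integral representation \eqref{mean_har_prob_intro} reduces everything to the single case $\sigma_f={!_t}$ is also right and is arguably the cleanest way to present it within this paper's framework, though as you say it does not let you skip the Ky-Fan-to-gauge passage.
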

The extension of this result to accretive or sectorial matrices is presented in Theorem \ref{theorem_norm_of_sigma} below.
\begin{lemma}\cite{kubo_ando}\label{AM-GM-HM}
Let $ A, B \in \mathcal{M}_{n}^+ $ and let $f\in \mathfrak{m}$ be such that $f'(1)=t$ for some $t\in(0,1)$. Then
\begin{center}
$ A!_{t}B\ \leq\ A\sigma_f B\ \leq\ A\nabla_{t}B   $
\end{center}
\end{lemma}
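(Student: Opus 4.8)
The plan is to strip off $A$ by a congruence, pass to scalars via the spectral theorem, and then push everything through the integral representation of Lemma \ref{hansen_lem}.

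First I would reduce to the case $A=\mathcal{I}$. Since $A\in\mathcal{M}_n^+$ is invertible, applying the congruence axiom in the definition of a matrix mean with $C=A^{-1/2}$ gives $A^{-1/2}(A\sigma_f B)A^{-1/2}=\mathcal{I}\sigma_f\!\left(A^{-1/2}BA^{-1/2}\right)$, and the same identity for $!_t$ and for $\nabla_t$. As congruence by $A^{-1/2}$ preserves $\leq$, and $A^{-1/2}BA^{-1/2}$ ranges over all of $\mathcal{M}_n^+$ when $B$ does, it suffices to prove $\mathcal{I}!_t C\leq \mathcal{I}\sigma_f C\leq \mathcal{I}\nabla_t C$ for every $C\in\mathcal{M}_n^+$. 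Here $\mathcal{I}\sigma_f C=f(C)$, $\mathcal{I}!_t C=\big((1-t)\mathcal{I}+tC^{-1}\big)^{-1}$ and $\mathcal{I}\nabla_t C=(1-t)\mathcal{I}+tC$ are all continuous functions of the single positive definite matrix $C$, so by the spectral theorem these operator inequalities are equivalent to the scalar inequalities $1!_t x\leq f(x)\leq 1\nabla_t x$ for all $x>0$ (necessity by taking $C=x\mathcal{I}$), where $1!_t x=\frac{x}{(1-t)x+t}$ and $1\nabla_t x=(1-t)+tx$.

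To establish these scalar inequalities I would invoke Lemma \ref{hansen_lem}: $f(x)=\int_0^1(1!_s x)\,d\nu_f(s)$ with $\nu_f$ a probability measure on $[0,1]$. Differentiating under the integral sign at $x=1$ — legitimate since $\partial_x(1!_s x)=s\big((1-s)x+s\big)^{-2}$ is bounded uniformly in $s\in[0,1]$ for $x$ near $1$ — yields $\int_0^1 s\,d\nu_f(s)=f'(1)=t$, i.e.\ the barycenter of $\nu_f$ is $t$. The upper bound is then immediate from the scalar AM--HM inequality $1!_s x\leq(1-s)+sx$ ($s\in[0,1]$, $x>0$): integrating against $\nu_f$ gives $f(x)\leq\int_0^1\big((1-s)+sx\big)\,d\nu_f(s)=(1-t)+tx=1\nabla_t x$. (Alternatively: $f\in\mathfrak{m}$ is operator concave by Proposition \ref{oper_intro_prop}(ii), hence concave on $(0,\infty)$, hence lies below its tangent line $1+t(x-1)$ at $x=1$.) For the lower bound, fix $x>0$ and observe that $s\mapsto 1!_s x=\frac{x}{x+s(1-x)}$ is convex on $[0,1]$, since its second $s$-derivative equals $2x(1-x)^2\big((1-s)x+s\big)^{-3}\geq 0$; Jensen's inequality for the probability measure $\nu_f$, whose barycenter is $t$, then gives $f(x)=\int_0^1(1!_s x)\,d\nu_f(s)\geq 1!_t x$. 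This proves the scalar inequalities, and with the reduction above, the lemma.

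The argument is essentially routine. The only step meriting care is the differentiation under the integral sign used to identify $\int_0^1 s\,d\nu_f(s)$ with $f'(1)$; and if one prefers the tangent-line proof of the upper bound instead, that too relies on $f'(1)=t$. Everything else is the spectral theorem together with the elementary scalar AM--HM inequality and Jensen's inequality.
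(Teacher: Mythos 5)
Your proposal is correct. Note, however, that the paper offers no proof of this lemma at all: it is quoted verbatim from Kubo--Ando as a known preliminary, so there is nothing internal to compare against. Your argument is a valid, self-contained derivation, and it is very much in the spirit of the paper's own machinery: the congruence reduction to $A=\mathcal{I}$ and the identity $\mathcal{I}\sigma_f C=f(C)$ are exactly the paper's characterization of $\sigma_f$, and the integral representation $f(x)=\int_0^1(1!_s x)\,d\nu_f(s)$ is the paper's Lemma \ref{hansen_lem}, which the authors use throughout (cf.\ \eqref{mean_har_prob_intro} and Definition \ref{def_sigma_f}). The two key computations check out: $\partial_x(1!_s x)\big|_{x=1}=s$, so the barycenter of $\nu_f$ is $f'(1)=t$ (and your domination bound justifies differentiating under the integral); and $s\mapsto x/\bigl((1-s)x+s\bigr)$ has second $s$-derivative $2x(1-x)^2\bigl((1-s)x+s\bigr)^{-3}\geq 0$, so Jensen gives the lower bound while the scalar AM--HM inequality integrated against $\nu_f$ gives the upper bound. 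The spectral-theorem step is also sound, since $\mathcal{I}!_tC$, $f(C)$ and $\mathcal{I}\nabla_tC$ are all functions of the single Hermitian matrix $C$ and hence simultaneously diagonalizable. The one thing your write-up buys beyond the citation is that it makes explicit why the hypothesis $f'(1)=t$ is the right normalization: it pins down the barycenter of the representing measure, which is precisely what both bounds hinge on.
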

This result has its accretive version, which we present in \ref{thm_amgmhm_accretive} below.

The following is a special form of the Choi-Davis inequality for accretive matrices.
\begin{lemma}\cite{Lin 2}\label{RA} 
 If $ A\in \mathcal{M}_{n} $ is accretive, then
 \begin{center}
 $ \Re(A^{-1})\leq (\Re A)^{-1} $
 \end{center}
 \end{lemma}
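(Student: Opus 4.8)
The plan is to reduce the inequality to a direct computation via a congruence transformation. Write $A=X+iY$ with $X=\Re A>0$ and $Y=\Im A$ Hermitian, and put $Z=X^{-1/2}YX^{-1/2}$, which is Hermitian. Then $A=X^{1/2}(\mathcal{I}+iZ)X^{1/2}$, hence $A^{-1}=X^{-1/2}(\mathcal{I}+iZ)^{-1}X^{-1/2}$. Since $X^{-1/2}$ is Hermitian, taking real parts commutes with this congruence, so
$\Re(A^{-1})=X^{-1/2}\,\Re\!\big((\mathcal{I}+iZ)^{-1}\big)\,X^{-1/2}$, while $(\Re A)^{-1}=X^{-1}=X^{-1/2}\,\mathcal{I}\,X^{-1/2}$. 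Using the standard fact that $T\le S$ implies $C^{*}TC\le C^{*}SC$, it therefore suffices to treat the model case $\Re\!\big((\mathcal{I}+iZ)^{-1}\big)\le \mathcal{I}$ for every Hermitian $Z$.

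For the model case I would compute the real part explicitly. Since $\mathcal{I}+iZ$ and $\mathcal{I}-iZ$ commute, $(\mathcal{I}+iZ)^{-1}+(\mathcal{I}-iZ)^{-1}=(\mathcal{I}+iZ)^{-1}(\mathcal{I}-iZ)^{-1}\big[(\mathcal{I}-iZ)+(\mathcal{I}+iZ)\big]=2(\mathcal{I}+Z^{2})^{-1}$, so $\Re\!\big((\mathcal{I}+iZ)^{-1}\big)=(\mathcal{I}+Z^{2})^{-1}$. As $Z$ is Hermitian, $Z^{2}\ge 0$, so $\mathcal{I}+Z^{2}\ge \mathcal{I}>0$, and the antitonicity of the inverse on positive definite matrices gives $(\mathcal{I}+Z^{2})^{-1}\le \mathcal{I}$, which is exactly the claimed model inequality. (This is precisely the matrix analogue of the scalar identity $\Re(z^{-1})=\Re(z)/|z|^{2}\le 1/\Re(z)$ valid when $\Re z>0$.) Undoing the congruence, $\Re(A^{-1})=X^{-1/2}(\mathcal{I}+Z^{2})^{-1}X^{-1/2}\le X^{-1/2}\mathcal{I}X^{-1/2}=(\Re A)^{-1}$.

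I do not expect a genuine obstacle here: the argument is essentially a normalization followed by a one‑line computation. The only points deserving explicit justification are that conjugation by the Hermitian matrix $X^{-1/2}$ both commutes with taking real parts and preserves the L\"owner order, and the identity $\Re\!\big((\mathcal{I}+iZ)^{-1}\big)=(\mathcal{I}+Z^{2})^{-1}$; both are elementary. An equivalent alternative is to diagonalize $Z=U\Lambda U^{*}$ and note that $\Re\!\big((1+i\lambda)^{-1}\big)=(1+\lambda^{2})^{-1}\le 1$ for each eigenvalue $\lambda$ of $Z$, which again yields $\Re\!\big((\mathcal{I}+iZ)^{-1}\big)\le \mathcal{I}$.
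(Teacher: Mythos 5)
Your argument is correct and complete. Note, however, that the paper does not prove this statement at all: Lemma \ref{RA} is imported verbatim from the reference \cite{Lin 2} and used as a known preliminary, so there is no in-paper proof to compare against. What you have written is essentially the standard derivation that underlies that reference: the factorization $A=X^{1/2}(\mathcal{I}+iZ)X^{1/2}$ with $X=\Re A$ and $Z=X^{-1/2}(\Im A)X^{-1/2}$ Hermitian, the observation that conjugation by the Hermitian matrix $X^{-1/2}$ commutes with taking real parts and preserves the L\"owner order, and the identity $\Re\bigl((\mathcal{I}+iZ)^{-1}\bigr)=(\mathcal{I}+Z^{2})^{-1}\le\mathcal{I}$. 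All three ingredients are justified correctly in your write-up (the resolvent-sum computation for the real part is valid because $\mathcal{I}+iZ$ and $\mathcal{I}-iZ$ commute, and invertibility of $\mathcal{I}+iZ$ is automatic since $Z$ is Hermitian), and the diagonalization of $Z$ you mention gives the same conclusion eigenvalue by eigenvalue. So your proposal supplies a self-contained proof of a fact the authors chose only to cite; there is no gap.
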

 We refer the reader to Proposition \ref{prop_f_r_f}, where an analogue of this result is given about matrix concave functions.
\begin{lemma}\cite{Drury1} \label{sec}
 If $ A\in \mathcal{M}_{n} $ with $ W(A)\subset S_{\alpha} $, then
 \begin{center}
 $\sec^{2}(\alpha)\hspace{0.25cm} \Re(A^{-1})\geq (\Re A)^{-1} $
 \end{center}
\end{lemma}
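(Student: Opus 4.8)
The plan is to reduce the asserted operator inequality, by a single congruence transformation, to the trivial scalar fact that a Hermitian matrix of norm at most $\tan\alpha$ has square at most $\tan^2\alpha$ times the identity. Write $H=\Re A$ and $K=\Im A$, so that $A=H+iK$ with $H>0$ and $K=K^{*}$. Unravelling the definitions of $S_{\alpha}$ and of the numerical range, the hypothesis $W(A)\subset S_{\alpha}$ says precisely that $\langle Hx,x\rangle>0$ and $|\langle Kx,x\rangle|\le\tan\alpha\,\langle Hx,x\rangle$ for every unit vector $x$; equivalently $-\tan\alpha\,H\le K\le\tan\alpha\,H$. Setting $T=H^{-1/2}KH^{-1/2}$, which is Hermitian, this is exactly $\|T\|\le\tan\alpha$.

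Next I would factor $A=H^{1/2}(\mathcal{I}+iT)H^{1/2}$, so that $A^{-1}=H^{-1/2}(\mathcal{I}+iT)^{-1}H^{-1/2}$, and, taking adjoints, $(A^{-1})^{*}=H^{-1/2}(\mathcal{I}-iT)^{-1}H^{-1/2}$. Since $T=T^{*}$, the two factors $\mathcal{I}+iT$ and $\mathcal{I}-iT$ commute, whence $\tfrac12\bigl((\mathcal{I}+iT)^{-1}+(\mathcal{I}-iT)^{-1}\bigr)=(\mathcal{I}+T^{2})^{-1}$, and therefore
$$\Re(A^{-1})=H^{-1/2}(\mathcal{I}+T^{2})^{-1}H^{-1/2}.$$
Since also $(\Re A)^{-1}=H^{-1}=H^{-1/2}\,\mathcal{I}\,H^{-1/2}$, the inequality to be proved is $\sec^{2}(\alpha)\,H^{-1/2}(\mathcal{I}+T^{2})^{-1}H^{-1/2}\ge H^{-1/2}\,\mathcal{I}\,H^{-1/2}$.

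Applying the congruence $X\mapsto H^{1/2}XH^{1/2}$ (which preserves the Löwner order in both directions) this becomes $\sec^{2}(\alpha)\,(\mathcal{I}+T^{2})^{-1}\ge\mathcal{I}$; inverting the two positive definite sides (order‑reversing) it is equivalent to $\mathcal{I}+T^{2}\le\sec^{2}(\alpha)\,\mathcal{I}$, i.e. $T^{2}\le\tan^{2}(\alpha)\,\mathcal{I}$, which is immediate from $\|T\|\le\tan\alpha$. I do not expect any real obstacle here; the two points that require care are the translation of the numerical‑range hypothesis into the operator inequality $-\tan\alpha\,H\le K\le\tan\alpha\,H$, and the identity $\Re\bigl((\mathcal{I}+iT)^{-1}\bigr)=(\mathcal{I}+T^{2})^{-1}$, which is valid only because $T=T^{*}$ makes the relevant factors commute — the one place where the noncommutativity of matrices could cause an error if the manipulation were done carelessly.
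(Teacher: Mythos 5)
Your proof is correct and complete; note that the paper itself does not prove this lemma but merely imports it from the Drury--Lin reference. Your argument via the decomposition $A=H^{1/2}(\mathcal{I}+iT)H^{1/2}$ with $T=H^{-1/2}(\Im A)H^{-1/2}$ Hermitian and $\|T\|\le\tan\alpha$, leading to $\Re(A^{-1})=H^{-1/2}(\mathcal{I}+T^{2})^{-1}H^{-1/2}$ and then to $T^{2}\le\tan^{2}(\alpha)\,\mathcal{I}$, is exactly the standard proof of this sectorial inequality, so there is nothing to add.
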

In Proposition \ref{prop_f_real_sec_f}, we give an analogue of this result for matrix concave functions.
\begin{lemma}\label{2}\cite{raissouli}
Let $ A, B\in\mathcal{M}_n $ be accretive matrices and $ 0<t<1 $. Then 
 \begin{equation}
 \Re(A!_{t}B)\geq (\Re A)!_{t}(\Re B).
 \end{equation}
\end{lemma}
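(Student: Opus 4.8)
The plan is to rewrite $\Re(A!_{t}B)$ in a normal form that exhibits it as a weighted combination of congruences of $\Re A$ and $\Re B$, and then invoke the extremal property of the harmonic mean of positive matrices. The only elementary fact needed at the outset is the congruence identity
\begin{equation*}
\Re(Z^{-1})=Z^{-1}(\Re Z)(Z^{-1})^{*},
\end{equation*}
valid for every invertible $Z\in\mathcal{M}_n$ (immediate from $Z^{-1}+(Z^{-1})^{*}=Z^{-1}(Z+Z^{*})(Z^{-1})^{*}$ and $(Z^{*})^{-1}=(Z^{-1})^{*}$). Since the accretive matrices form a convex cone that is invariant under inversion, $X:=(1-t)A^{-1}+tB^{-1}$ is accretive, hence invertible, and $A!_{t}B=X^{-1}$. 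Applying the identity to $Z=X$, and then to $Z=A,B$ inside $\Re X=(1-t)\Re(A^{-1})+t\Re(B^{-1})$, I would obtain
\begin{equation*}
\Re(A!_{t}B)=X^{-1}(\Re X)(X^{-1})^{*}=(1-t)\,C(\Re A)C^{*}+t\,D(\Re B)D^{*},
\end{equation*}
where $C:=X^{-1}A^{-1}$, $D:=X^{-1}B^{-1}$, and a one-line computation gives the crucial relation $(1-t)C+tD=X^{-1}\bigl((1-t)A^{-1}+tB^{-1}\bigr)=\mathcal{I}$.

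The heart of the proof is then the following statement about positive definite $P,Q$: if $(1-t)C+tD=\mathcal{I}$, then
\begin{equation*}
(1-t)\,CPC^{*}+t\,DQD^{*}\ \geq\ \bigl((1-t)P^{-1}+tQ^{-1}\bigr)^{-1}=P!_{t}Q .
\end{equation*}
Equivalently, $\langle(P!_{t}Q)x,x\rangle=\min\{(1-t)\langle P\xi,\xi\rangle+t\langle Q\eta,\eta\rangle:(1-t)\xi+t\eta=x\}$, i.e.\ the weighted form of the classical Anderson--Trapp variational formula for the parallel sum. If one prefers to be self-contained, this minimality follows in one line from the completion-of-squares bound $\langle P\xi,\xi\rangle\geq 2\Re\langle\xi,w\rangle-\langle P^{-1}w,w\rangle$ (true for all vectors $\xi,w$), summed with weights $(1-t),t$ and maximized over $w$. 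Feeding in $P=\Re A$, $Q=\Re B$ and the \emph{feasible} pair $\xi=C^{*}x$, $\eta=D^{*}x$ (feasible because $(1-t)C^{*}x+tD^{*}x=\bigl((1-t)C+tD\bigr)^{*}x=x$) gives, for every $x$,
\begin{equation*}
\langle\Re(A!_{t}B)x,x\rangle=(1-t)\langle(\Re A)C^{*}x,C^{*}x\rangle+t\langle(\Re B)D^{*}x,D^{*}x\rangle\ \geq\ \langle\bigl((\Re A)!_{t}(\Re B)\bigr)x,x\rangle,
\end{equation*}
which is exactly the asserted inequality.

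I expect the only non-routine point to be the recognition of the normal form $\Re(A!_{t}B)=(1-t)C(\Re A)C^{*}+t\,D(\Re B)D^{*}$ with $(1-t)C+tD=\mathcal{I}$; once this is in place, the inequality is just the extremality of the harmonic mean and nothing remains. It is worth noting that Lemma~\ref{RA} does \emph{not} help here directly: it only yields $\Re(X^{-1})\leq(\Re X)^{-1}$ and $\Re X\leq(1-t)(\Re A)^{-1}+t(\Re B)^{-1}$, both of which bound $\Re(A!_{t}B)$ from above, i.e.\ in the wrong direction. One should also check that $A$, $B$, $X$ are genuinely invertible so that $C,D$ make sense, which is automatic since every accretive matrix is invertible.
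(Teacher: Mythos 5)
Your proof is correct. Note first that the paper itself offers no proof of this lemma: it is quoted verbatim from the reference \cite{raissouli}, so there is no internal argument to compare against. What you supply is a complete, self-contained derivation, and every step checks out: the congruence identity $\Re(Z^{-1})=Z^{-1}(\Re Z)(Z^{-1})^{*}$ is the right normalization, the resulting decomposition $\Re(A!_{t}B)=(1-t)C(\Re A)C^{*}+tD(\Re B)D^{*}$ with $(1-t)C+tD=\mathcal{I}$ is exactly the structure needed, and the weighted Anderson--Trapp variational formula (which you re-derive from the completion-of-squares bound, so nothing is left to citation) closes the argument. This is essentially the classical route by which such ``real part versus harmonic mean'' inequalities are proved in the sectorial-matrix literature, and your observation that Lemma \ref{RA} points in the wrong direction is accurate and worth keeping: the upper bounds $\Re(X^{-1})\leq(\Re X)^{-1}$ only yield the reverse comparison (which is where the $\sec^{2}(\alpha)$ factor of Lemma \ref{lemma_reverse_real_!t} comes from), whereas the lower bound genuinely requires the extremal property of the parallel sum. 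The one cosmetic slip is the phrase ``convex cone of $\mathcal{M}_n^{+}$'' echoed from the paper's introduction; the accretive matrices are a convex cone containing $\mathcal{M}_n^{+}$, not contained in it, but this does not affect your argument, which only uses closure under convex combinations and inversion together with invertibility of accretive matrices, all of which you justify.
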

It is interesting to investigate this result for an arbitrary mean $\sigma_f$. This will be done in Proposition \ref{r_a_sigma_b>} below.
\begin{lemma}(\cite{Lin 1}) \label{lemma_reverse_real_!t} Let $ A, B \in\mathcal{M}_n$ be accretive matrices and $W(A), W(B)\subset S_{\alpha} $. Then, for $0<t<1$,
\begin{equation}
 \Re(A!_{t}B)\leq \sec^{2}(\alpha)(\Re A)!_{t}(\Re B).
\label{sec!}
\end{equation}
\end{lemma}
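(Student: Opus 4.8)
The plan is to express the weighted harmonic mean as a single matrix inverse and then apply the two available inversion estimates, Lemma \ref{RA} for accretive matrices and Lemma \ref{sec} for sectorial ones, once to $A,B$ and once to their harmonic combination. Set $C=(1-t)A^{-1}+tB^{-1}$, so that $A!_tB=C^{-1}$. Because the accretive matrices form a convex cone invariant under inversion, $A^{-1}$ and $B^{-1}$ are accretive and hence so is $C$; in particular $\Re C>0$, $C^{-1}$ is well defined, and Lemma \ref{RA} applies to $C$.

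First I would use Lemma \ref{RA} on $C$ to obtain the upper bound
\[\Re(A!_tB)=\Re\bigl(C^{-1}\bigr)\leq(\Re C)^{-1},\]
which reduces the problem to bounding $(\Re C)^{-1}$ from above by $\sec^2(\alpha)\,(\Re A)!_t(\Re B)$, equivalently to bounding $\Re C$ from below. Since $\Re$ is real-linear, $\Re C=(1-t)\Re(A^{-1})+t\,\Re(B^{-1})$; and since $W(A),W(B)\subset S_\alpha$, Lemma \ref{sec} gives $\Re(A^{-1})\geq\cos^2(\alpha)(\Re A)^{-1}$ and $\Re(B^{-1})\geq\cos^2(\alpha)(\Re B)^{-1}$. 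Combining these with weights $1-t$ and $t$,
\[\Re C\geq\cos^2(\alpha)\Bigl((1-t)(\Re A)^{-1}+t(\Re B)^{-1}\Bigr)=\cos^2(\alpha)\bigl((\Re A)!_t(\Re B)\bigr)^{-1}.\]
As inversion reverses the order on $\mathcal{M}_n^+$, this yields $(\Re C)^{-1}\leq\sec^2(\alpha)\,(\Re A)!_t(\Re B)$, and chaining with the previous display gives exactly \eqref{sec!}.

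I do not expect a genuine obstacle here: the whole argument is a short combination of Lemma \ref{RA} and Lemma \ref{sec}, and the only points worth a line of justification are that $C$ is accretive (so Lemma \ref{RA} is applicable to it), the real-linearity of $\Re$, the identity $(1-t)(\Re A)^{-1}+t(\Re B)^{-1}=\bigl((\Re A)!_t(\Re B)\bigr)^{-1}$, and the order-reversing property of inversion on positive definite matrices. When $\alpha=0$ the factor $\sec^2(\alpha)$ equals $1$, the sector $S_0$ is $(0,\infty)$, and the bound collapses to the trivial equality $\Re(A!_tB)=(\Re A)!_t(\Re B)$ for $A,B>0$, which together with Lemma \ref{2} pins down the positive-definite case; for $\alpha>0$ the constant is genuinely needed and enters precisely through Lemma \ref{sec}. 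A more computational alternative would start from the identity $\Re(X^{-1})=\bigl(\Re X+\Im X(\Re X)^{-1}\Im X\bigr)^{-1}$, but the route above is cleaner.
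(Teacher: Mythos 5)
Your proof is correct: writing $A!_tB=C^{-1}$ with $C=(1-t)A^{-1}+tB^{-1}$, applying Lemma \ref{RA} to the accretive matrix $C$, applying Lemma \ref{sec} to $A$ and $B$, and then using the order-reversing property of inversion on positive definite matrices gives exactly \eqref{sec!}, and every intermediate step (accretivity of $C$, real-linearity of $\Re$, the identity $(1-t)(\Re A)^{-1}+t(\Re B)^{-1}=\bigl((\Re A)!_t(\Re B)\bigr)^{-1}$) is justified. The paper itself states this lemma without proof, citing \cite{Lin 1}; your argument is essentially the standard one from that reference and relies only on Lemmas \ref{RA} and \ref{sec}, both of which the paper already records.
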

This lemma has been also extended to any matrix mean in Proposition \ref{prop_real_a_sigma_b_less}.
\begin{lemma}\label{lemma_sharp>} \cite{raissouli} Let $ A, B \in\mathcal{M}_n$ be accretive matrices and let $0<t<1.$ Then 
\begin{equation}
\Re(A\sharp_{t} B)\geq (\Re A)\sharp_{t}(\Re B).\label{sharp}
\end{equation}
\end{lemma}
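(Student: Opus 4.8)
The plan is to run the argument at the level of the integral representation \eqref{raissouli_def}, reducing the matrix inequality to a pointwise operator inequality on the integrand, to which Lemma \ref{2} applies after a change of variables.

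First I would record the elementary facts about the integrand. Since $A,B$ are accretive, so are $A^{-1}$ and $B^{-1}$, hence for $s>0$ we have $\Re(A^{-1}+sB^{-1})=\Re(A^{-1})+s\,\Re(B^{-1})>0$; in particular $A^{-1}+sB^{-1}$ is invertible and $(A^{-1}+sB^{-1})^{-1}$ is accretive. The key observation is the identity
$$(A^{-1}+sB^{-1})^{-1}=\tfrac{1}{1+s}\,A!_{\frac{s}{1+s}}B,$$
which follows from $(1-t)A^{-1}+tB^{-1}=\tfrac{1}{1+s}(A^{-1}+sB^{-1})$ upon taking $t=\tfrac{s}{1+s}\in(0,1)$. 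Taking real parts and applying Lemma \ref{2} with this value of $t$ yields
$$\Re\big((A^{-1}+sB^{-1})^{-1}\big)=\tfrac{1}{1+s}\,\Re\big(A!_{\frac{s}{1+s}}B\big)\ \geq\ \tfrac{1}{1+s}\,(\Re A)!_{\frac{s}{1+s}}(\Re B)=\big((\Re A)^{-1}+s(\Re B)^{-1}\big)^{-1},$$
the last equality being the same computation read backwards, now for the positive matrices $\Re A$ and $\Re B$.

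Next I would integrate this pointwise inequality against the positive measure $\tfrac{\sin(t\pi)}{\pi}s^{t-1}\,ds$ on $(0,\infty)$, which is positive precisely because $0<t<1$. Since $X\mapsto \Re X$ is real-linear and bounded it commutes with the norm-convergent integral in \eqref{raissouli_def}, so
$$\Re(A\sharp_t B)=\tfrac{\sin(t\pi)}{\pi}\int_0^\infty s^{t-1}\,\Re\big((A^{-1}+sB^{-1})^{-1}\big)\,ds\ \geq\ \tfrac{\sin(t\pi)}{\pi}\int_0^\infty s^{t-1}\big((\Re A)^{-1}+s(\Re B)^{-1}\big)^{-1}\,ds.$$
Because $\Re A,\Re B>0$, the right-hand side is exactly $(\Re A)\sharp_t(\Re B)$, by the fact recorded just after \eqref{raissouli_def} that the representation \eqref{raissouli_def} reduces to the usual weighted geometric mean on $\mathcal{M}_n^+$. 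This gives the desired inequality.

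The only delicate point is the legitimacy of interchanging $\Re$ with the improper integral and the preservation of the order $\leq$ under integration against a positive measure; both are routine once one notes that $s\mapsto(A^{-1}+sB^{-1})^{-1}$ is continuous with $s^{t-1}\|(A^{-1}+sB^{-1})^{-1}\|$ integrable near $0$ and near $\infty$, which is implicit in \eqref{raissouli_def} being well defined. An alternative and shorter route bypasses the integrand-level estimate entirely: once one knows $A\sharp_t B=\int_0^1 A!_u B\,d\nu_t(u)$ for a probability measure $\nu_t$ on $[0,1]$ — the identity the paper extracts from Proposition \ref{prop_our_hansen} — applying $\Re$ termwise and using Lemma \ref{2} under the integral sign gives the conclusion at once.
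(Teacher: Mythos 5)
Your argument is correct. Note that the paper does not actually prove this lemma --- it is quoted from \cite{raissouli} as a preliminary --- but your second, shorter route (write $A\sharp_t B=\int_0^1 A!_u B\,d\nu_t(u)$ for the probability measure $d\nu_t(u)=\frac{\sin(t\pi)}{\pi}\frac{u^{t-1}}{(1-u)^{t}}du$, apply $\Re$ under the integral sign, and invoke Lemma \ref{2} pointwise) is exactly the argument the paper uses to prove the generalization to arbitrary means in Proposition \ref{r_a_sigma_b>}, of which this lemma is the special case $\sigma_f=\sharp_t$. Your first route, working directly with the $(0,\infty)$ integral in \eqref{raissouli_def} via the substitution $u=\frac{s}{1+s}$, is the same estimate before the change of variables; both are sound, and the pointwise identity $(A^{-1}+sB^{-1})^{-1}=\tfrac{1}{1+s}\,A!_{\frac{s}{1+s}}B$ together with positivity of the weight is all that is needed.
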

\begin{lemma}\cite{Tan}\label{lemma_sharpsec} Let $ A, B \in\mathcal{M}_n$ be accretive matrices such that $W(A), W(B)\subset S_{\alpha} $. Then, for $0<t<1,$
\begin{equation}
 \Re(A\sharp_{t}B)\leq \sec^{2}(\alpha)(\Re A)\sharp_{t}(\Re B).
\label{sharpsec}
\end{equation}

\end{lemma}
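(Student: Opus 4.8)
The plan is to reduce this sectorial estimate for the weighted geometric mean to the corresponding estimate for the weighted harmonic mean, namely Lemma \ref{lemma_reverse_real_!t}, by means of the integral formula \eqref{raissouli_def}.

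First I would rewrite \eqref{raissouli_def} in terms of harmonic means. Using
$(A^{-1}+sB^{-1})^{-1}=\tfrac{1}{1+s}\bigl(\tfrac{1}{1+s}A^{-1}+\tfrac{s}{1+s}B^{-1}\bigr)^{-1}=\tfrac{1}{1+s}\,A!_{s/(1+s)}B$,
the substitution $\lambda=s/(1+s)$ turns \eqref{raissouli_def} into
\[
A\sharp_t B=\int_0^1 A!_\lambda B\;d\nu_t(\lambda),\qquad d\nu_t(\lambda)=\frac{\sin(t\pi)}{\pi}\,\lambda^{t-1}(1-\lambda)^{-t}\,d\lambda ,
\]
and the Beta integral $\frac{\sin(t\pi)}{\pi}\int_0^1\lambda^{t-1}(1-\lambda)^{-t}\,d\lambda=1$ shows that $\nu_t$ is a probability measure on $(0,1)$. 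Here every matrix $A^{-1}+sB^{-1}$, hence every $A!_\lambda B$, is accretive (the accretive matrices form an inversion-invariant convex cone), so the integrand is well defined; moreover $\lambda\mapsto A!_\lambda B$ extends continuously to $[0,1]$ with endpoint values $A$ and $B$, while $\lambda^{t-1}(1-\lambda)^{-t}$ is integrable, so the integral converges absolutely (in the Bochner sense). Applying the identical computation to the positive definite matrices $\Re A$ and $\Re B$ gives $(\Re A)\sharp_t(\Re B)=\int_0^1(\Re A)!_\lambda(\Re B)\;d\nu_t(\lambda)$ with the \emph{same} measure $\nu_t$; equivalently this is \eqref{mean_har_prob_intro} applied to $f(x)=x^t$.

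Next I would invoke Lemma \ref{lemma_reverse_real_!t}: since $W(A),W(B)\subset S_\alpha$, for every $\lambda\in(0,1)$ we have $\Re(A!_\lambda B)\le\sec^2(\alpha)\,(\Re A)!_\lambda(\Re B)$. Because $\nu_t$ is a positive measure and the map $X\mapsto\Re X$ is real linear and continuous, it may be pulled inside the (absolutely convergent) integral, and integrating the pointwise inequality against $\nu_t$ yields
\[
\Re(A\sharp_t B)=\int_0^1\Re(A!_\lambda B)\,d\nu_t(\lambda)\le\sec^2(\alpha)\int_0^1(\Re A)!_\lambda(\Re B)\,d\nu_t(\lambda)=\sec^2(\alpha)\,(\Re A)\sharp_t(\Re B),
\]
which is the assertion. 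The companion inequality of Lemma \ref{lemma_sharp>} would follow from the very same argument after replacing Lemma \ref{lemma_reverse_real_!t} by Lemma \ref{2}.

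The change of variables and the Beta normalization are routine; the one step that deserves care is the verification that \eqref{raissouli_def} genuinely rearranges into the probability average $\int_0^1 A!_\lambda B\,d\nu_t(\lambda)$ for accretive $A,B$ and that $\Re(\cdot)$ commutes with that integral. This is exactly the bridge between the Drury/Ra\"{i}ssouli definitions \eqref{drury_def}, \eqref{raissouli_def} and the harmonic-mean picture underlying the whole paper, so I expect this to be the main (though mild) obstacle, with everything else being bookkeeping.
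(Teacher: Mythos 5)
Your argument is correct. Note, however, that the paper does not actually prove Lemma \ref{lemma_sharpsec}: it is imported verbatim from \cite{Tan} as a known result, so there is no in-paper proof to compare against. What you have written is a legitimate self-contained derivation, and it is in substance identical to the technique the paper itself uses one step later: the change of variables $\lambda=s/(1+s)$ turning \eqref{raissouli_def} into the probability average $A\sharp_t B=\int_0^1 A!_\lambda B\,d\nu_t(\lambda)$ is exactly the representation recorded at the start of Section 4 (with $d\nu_t(\lambda)=\frac{\sin(t\pi)}{\pi}\lambda^{t-1}(1-\lambda)^{-t}d\lambda$), and integrating the pointwise bound of Lemma \ref{lemma_reverse_real_!t} against $\nu_t$ is precisely the proof the paper gives for the generalization to arbitrary means, Proposition \ref{prop_real_a_sigma_b_less}; your proof is that proposition specialized to $f(x)=x^t$. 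All the supporting details you flag check out: $A^{-1}+sB^{-1}$ is accretive (hence invertible) because the accretive cone is closed under positive combinations and inversion, the Beta integral $\int_0^1\lambda^{t-1}(1-\lambda)^{-t}d\lambda=\pi/\sin(t\pi)$ normalizes $\nu_t$, the same measure represents $(\Re A)\sharp_t(\Re B)$ via \eqref{mean_har_prob_intro} with $f(x)=x^t$, and $\Re(\cdot)$ passes through the absolutely convergent Bochner integral while the Loewner order is preserved under integration against a positive measure. The one thing you gain over the paper's treatment is that Lemma \ref{lemma_sharpsec} need not be taken on faith from \cite{Tan}; the one thing you rely on externally is Lemma \ref{lemma_reverse_real_!t} from \cite{Lin 1}, which the paper also only cites, so the logical dependence is not reduced to zero, merely shifted to the harmonic-mean case.
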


It is well known that for any matrix $A\in\mathcal{M}_n$, $\|\Re A\|\leq \|A\|$, where $\|\cdot\|$ is any unitarily invariant norm  on $\mathcal{M}_n$. The following lemma presents a reversed version of this inequality for sectorial matrices.
\begin{lemma}\cite{Zhang}\label{norm}
Let $ A \in \mathcal{M}_{n} $ be such that $ W(A)\subset S_{\alpha}, $ for some $0\leq \alpha<\frac{\pi}{2}$ and let $ \parallel.\parallel $ be any unitarily invariant norm on $\mathcal{M}_n$. Then
\begin{center}
$ \cos(\alpha)\ \parallel A\parallel\ \leq\
 \parallel \Re(A)\parallel \leq \|A\|.$
\end{center}
\end{lemma}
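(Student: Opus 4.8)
The plan is to establish the two inequalities separately. The right-hand one, $\|\Re(A)\|\le\|A\|$, requires no sectoriality: writing $\Re(A)=\frac{1}{2}(A+A^{*})$ and recalling that every unitarily invariant norm satisfies $\|A^{*}\|=\|A\|$ (since $A$ and $A^{*}$ have the same singular values), the triangle inequality gives $\|\Re(A)\|\le\frac12\bigl(\|A\|+\|A^{*}\|\bigr)=\|A\|$.

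For the left-hand inequality $\cos(\alpha)\|A\|\le\|\Re(A)\|$ I would first note that $W(A)\subset S_\alpha$ forces $\Re(A)>0$, so $A$ is invertible, and then write
\[
A=H^{1/2}(\mathcal I+iT)H^{1/2},\qquad H:=\Re(A)>0,\quad T:=H^{-1/2}(\Im A)H^{-1/2}\ \text{(Hermitian)}.
\]
The substitution $y=H^{1/2}x$ turns the sector condition $|\Im\langle Ax,x\rangle|\le\tan(\alpha)\Re\langle Ax,x\rangle$ (valid for all unit vectors $x$) into $|\langle Ty,y\rangle|\le\tan(\alpha)\|y\|^{2}$ for all $y$, i.e.\ $-\tan(\alpha)\mathcal I\le T\le\tan(\alpha)\mathcal I$. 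Hence $(\mathcal I-iT)(\mathcal I+iT)=\mathcal I+T^{2}\le\sec^{2}(\alpha)\mathcal I$, so $\mathcal I+iT$ has operator norm at most $\sec(\alpha)$, and therefore $\sigma_j(\mathcal I+iT)\le\sec(\alpha)$ for every $j$.

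Next I would pass to singular values and run a weak majorization argument. Applying Horn's inequality $\prod_{j=1}^{k}\sigma_j(XY)\le\prod_{j=1}^{k}\sigma_j(X)\sigma_j(Y)$ twice to the product $H^{1/2}(\mathcal I+iT)H^{1/2}$, and using $\sigma_j(H^{1/2})^{2}=\lambda_j(H)$ together with $\sigma_j(\mathcal I+iT)\le\sec(\alpha)$, one gets for every $k$
\[
\prod_{j=1}^{k}\sigma_j(A)\ \le\ \prod_{j=1}^{k}\sigma_j(H^{1/2})^{2}\,\sigma_j(\mathcal I+iT)\ \le\ \prod_{j=1}^{k}\bigl(\sec(\alpha)\,\lambda_j(H)\bigr),
\]
i.e.\ $\bigl(\sigma_j(A)\bigr)_j$ is weakly log-majorized by $\bigl(\sec(\alpha)\lambda_j(H)\bigr)_j$. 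Since weak log-majorization implies weak majorization for nonnegative vectors, $\sum_{j=1}^{k}\sigma_j(A)\le\sec(\alpha)\sum_{j=1}^{k}\lambda_j(H)$ for every $k$; and because $H=\Re(A)>0$ we have $\lambda_j(H)=\sigma_j(\Re A)$. Thus $\cos(\alpha)\bigl(\sigma_j(A)\bigr)_j$ is weakly majorized by $\bigl(\sigma_j(\Re A)\bigr)_j$, and the Ky Fan dominance theorem (domination in all Ky Fan $k$-norms is equivalent to domination in all unitarily invariant norms) yields $\cos(\alpha)\|A\|\le\|\Re(A)\|$ for every unitarily invariant norm. Horn's inequality, the implication from weak log-majorization to weak majorization, and Ky Fan dominance are all classical; see e.g.\ \cite{bhatia_matrix}.

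The step that carries the argument is this majorization passage. A pointwise estimate bounding $\sigma_j\bigl(H^{1/2}(\mathcal I+iT)H^{1/2}\bigr)$ by a constant multiple of $\lambda_j(H)$ is false in general (it already fails for diagonal $2\times2$ matrices), so the comparison must be carried out at the level of partial sums; moreover it is precisely the Horn/log-majorization step that produces the sharp constant $\sec(\alpha)$, whereas the crude splitting $A=\Re(A)+i\Im(A)$ followed by the triangle inequality would only give the weaker $1+\tan(\alpha)$. Everything else --- the Cartesian-type factorization of $A$, the reformulation of the sector condition as a bound on $T$, and the appeal to Ky Fan dominance --- is routine.
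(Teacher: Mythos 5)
The paper does not prove this lemma at all: it is quoted verbatim from \cite{Zhang}, so there is no in-paper argument to compare against. Your proof is correct and self-contained. The upper bound via $\Re(A)=\tfrac12(A+A^{*})$ and $\|A^{*}\|=\|A\|$ is exactly right and needs no sectoriality. For the lower bound, the factorization $A=H^{1/2}(\mathcal I+iT)H^{1/2}$ with $H=\Re(A)$, the translation of the sector condition into $-\tan(\alpha)\mathcal I\le T\le\tan(\alpha)\mathcal I$, the bound $\|\mathcal I+iT\|_\infty\le\sec(\alpha)$ from $\mathcal I+T^{2}\le\sec^{2}(\alpha)\mathcal I$, and the Horn/weak-log-majorization/Ky Fan chain are all valid (the only degenerate case, zero entries in the log-majorization step, is excluded since $H>0$ makes everything invertible). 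Your remark that a termwise bound $\sigma_j(A)\le\sec(\alpha)\lambda_j(H)$ cannot be extracted from the product $H^{1/2}BH^{1/2}$ for a general contraction-like $B$ is a fair justification for working at the level of partial sums, though it is worth knowing that for this particular structure the pointwise inequality does hold (it is the Drury--Lin singular value inequality, Lemma~\ref{sec}'s source), proved by other means. Your route differs from the one the citation points to: Zhang derives the inequality from the congruence decomposition $A=X^{*}\mathrm{diag}(e^{i\theta_1},\dots,e^{i\theta_n})X$ with $|\theta_j|\le\alpha$, comparing $X^{*}X$ with $X^{*}\Re(D)X\ge\cos(\alpha)X^{*}X$; your Cartesian factorization avoids constructing that decomposition at the cost of invoking Horn's inequality and Ky Fan dominance explicitly. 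Either way the constant $\cos(\alpha)$ comes out sharp.
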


\begin{lemma}\cite{Hoa} Let $ A, B\in \mathcal{M}_n $ be such that $ 0<m\mathcal{I}\leq A, B\leq M\mathcal{I} $, for some positive scalars $m,M$, and let $f,g\in\mathfrak{m}.$  Then for every unital positive linear map $ \Phi $,
\begin{equation}
\Phi^{2}(A\sigma_f B)\leq K(h)^{2}\Phi^{2}(A\sigma_g B),
\end{equation}
where $h=\frac{M}{m}$ and $k(h)=\frac{(h+1)^2}{4h}$ is the well known Kantorovich constant.
\end{lemma}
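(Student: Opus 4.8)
The plan is to convert the squared matrix inequality into a single operator‑norm estimate and then exploit the elementary fact that $\|XY\|\le\frac14\|X+Y\|^2$ for positive semidefinite matrices $X,Y$. Set $W=\Phi(A\sigma_f B)$ and $V=\Phi(A\sigma_g B)$. Since $m\mathcal I\le A,B\le M\mathcal I$ and every matrix mean is monotone, $W$ and $V$ are positive definite; in particular $V$ is invertible. Conjugating the target $W^2\le K(h)^2V^2$ by $V^{-1}$ shows it is equivalent to $V^{-1}W^2V^{-1}\le K(h)^2\mathcal I$, and since $W,V$ are Hermitian, $V^{-1}W^2V^{-1}=(WV^{-1})^*(WV^{-1})$, so the whole lemma reduces to
\[\big\|\Phi(A\sigma_f B)\,\Phi(A\sigma_g B)^{-1}\big\|\le K(h).\]
It is worth stressing that one cannot simply square the one‑sided inequality $\Phi(A\sigma_f B)\le K(h)\,\Phi(A\sigma_g B)$, because squaring is not order preserving; passing to the norm of the product is exactly what circumvents this.

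To bound that norm I would first sandwich the two means between the harmonic and arithmetic means, $A!B\le A\sigma_g B$ and $A\sigma_f B\le A\nabla B$, and apply the positive linear map $\Phi$ to obtain $\Phi(A\sigma_f B)\le\Phi(A\nabla B)=\tfrac12\big(\Phi(A)+\Phi(B)\big)$ and $\Phi(A\sigma_g B)^{-1}\le\Phi(A!B)^{-1}$. For the last term, operator convexity of $t\mapsto t^{-1}$ gives $\Phi(A!B)^{-1}\le\Phi\big((A!B)^{-1}\big)=\tfrac12\big(\Phi(A^{-1})+\Phi(B^{-1})\big)$. Introducing the weight $Mm$ and adding these estimates yields
\[\Phi(A\sigma_f B)+Mm\,\Phi(A\sigma_g B)^{-1}\le\tfrac12\big(\Phi(A)+Mm\,\Phi(A^{-1})\big)+\tfrac12\big(\Phi(B)+Mm\,\Phi(B^{-1})\big).\]
Now the scalar inequality $t+\tfrac{Mm}{t}\le M+m$ on $[m,M]$, transported through the functional calculus and the unital positive map $\Phi$, gives $\Phi(A)+Mm\,\Phi(A^{-1})\le(M+m)\mathcal I$ together with the analogous bound for $B$, so that
\[\Phi(A\sigma_f B)+Mm\,\Phi(A\sigma_g B)^{-1}\le(M+m)\mathcal I.\]

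Finally, applying $\|XY\|\le\frac14\|X+Y\|^2$ with $X=\Phi(A\sigma_f B)$ and $Y=Mm\,\Phi(A\sigma_g B)^{-1}$ produces $Mm\,\big\|\Phi(A\sigma_f B)\Phi(A\sigma_g B)^{-1}\big\|\le\frac14(M+m)^2$, hence $\big\|\Phi(A\sigma_f B)\Phi(A\sigma_g B)^{-1}\big\|\le\frac{(M+m)^2}{4Mm}=K(h)$, and the lemma follows from the reduction above. The main obstacle is precisely this passage to the square: the core device is the pairing of the norm inequality $\|XY\|\le\frac14\|X+Y\|^2$ with the judicious weight $Mm$, which is what upgrades the additive bound $(M+m)\mathcal I$ into the Kantorovich factor $\tfrac{(M+m)^2}{4Mm}$; the remaining ingredients — the Kubo--Ando bounds $A!B\le A\sigma B\le A\nabla B$, the operator convexity of inversion, and the scalar estimate on $[m,M]$ — are classical and just need to be assembled in the right order.
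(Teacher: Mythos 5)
The paper does not actually prove this lemma (it is quoted from \cite{Hoa}); the natural benchmark is the proof of its accretive analogue, Theorem \ref{theorem_square_sigma}, and your argument is essentially that proof: the pairing of Lemma \ref{normA+B} with the weight $Mm$, the step $\Phi(X)^{-1}\le\Phi(X^{-1})$ from \eqref{phi(A)}, and the bound $\Phi(A\nabla B)+Mm\,\Phi\left((A!B)^{-1}\right)\le(M+m)\mathcal{I}$ are exactly the ingredients used there. Your additional reduction of $\Phi^{2}(A\sigma_f B)\le K(h)^{2}\Phi^{2}(A\sigma_g B)$ to the operator-norm bound $\|\Phi(A\sigma_f B)\Phi(A\sigma_g B)^{-1}\|_\infty\le K(h)$, via $V^{-1}W^{2}V^{-1}=(WV^{-1})^{*}(WV^{-1})$, is correct and is precisely what is needed to pass from the norm estimate back to the squared order inequality (the paper's Theorem \ref{theorem_square_sigma} stops at the norm bound and never makes this conversion).

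There is, however, one genuine gap: the sandwich $A!B\le A\sigma_g B$ and $A\sigma_f B\le A\nabla B$ that you invoke as ``the Kubo--Ando bounds'' is not valid for arbitrary $f,g\in\mathfrak{m}$. What Lemma \ref{AM-GM-HM} provides is the weighted version $A!_{t}B\le A\sigma_f B\le A\nabla_{t}B$ with $t=f'(1)$; the unweighted sandwich already fails for $\sigma_f=\nabla_{3/4}$. If $f'(1)=g'(1)=t$, your computation survives verbatim with $\nabla_{t}$ and $!_{t}$ in place of $\nabla$ and $!$, because $(1-t)\left(\Phi(A)+Mm\,\Phi(A^{-1})\right)+t\left(\Phi(B)+Mm\,\Phi(B^{-1})\right)\le(M+m)\mathcal{I}$ still holds. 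But if $f'(1)\ne g'(1)$ the two weighted estimates no longer recombine into $(M+m)\mathcal{I}$, and in fact the lemma as stated is false for arbitrary $f,g\in\mathfrak{m}$: taking $f(x)=x$, $g(x)\equiv 1$ (so $A\sigma_f B=B$, $A\sigma_g B=A$), $\Phi=\mathrm{id}$, $A=m\mathcal{I}$, $B=M\mathcal{I}$ reduces the claim to $h\le K(h)$, which fails for every $h>1$; the nondegenerate pair $\sharp_{0.9},\sharp_{0.1}$ gives the same contradiction. So the missing hypothesis --- that both means lie between $!$ and $\nabla$, equivalently $f'(1)=g'(1)=\frac{1}{2}$, as in the source \cite{Hoa} --- is needed both by your proof and by the statement itself; once it is added, your argument is complete.
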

 Theorem \ref{theorem_square_sigma} below presents the  accretive version of this lemma.

\begin{lemma}\cite{Bhatia1}
Let $ A\in\mathcal{M}_n^+ $ and $ \Phi $ be positive linear map.  Then we have 
\begin{equation}
\Phi(A^{-1})\geq\ \Phi(A)^{-1}.\label{phi(A)}
\end{equation}
\end{lemma}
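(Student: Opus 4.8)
The plan is to reduce \eqref{phi(A)}, via the spectral theorem, to a sum of manifestly positive $2\times 2$ block matrices and then to read the conclusion off the Schur complement criterion. (Here I take $\Phi$ to be unital, as everywhere else in the paper; without $\Phi(\mathcal I)=\mathcal I$ the inequality already fails for suitable scalar multiples of a fixed map.)

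First I would write $A=\sum_{j}\lambda_j P_j$ with $\lambda_j>0$ and the $P_j$ mutually orthogonal projections summing to $\mathcal I$, so that $A^{-1}=\sum_j \lambda_j^{-1}P_j$, and then set $Q_j:=\Phi(P_j)$. Positivity and linearity of $\Phi$ give $Q_j\ge 0$; unitality gives $\sum_j Q_j=\Phi(\mathcal I)=\mathcal I$; and linearity gives $\Phi(A)=\sum_j\lambda_j Q_j$ and $\Phi(A^{-1})=\sum_j\lambda_j^{-1}Q_j$. Since $A\ge\varepsilon\mathcal I$ for some $\varepsilon>0$, we also have $\Phi(A)\ge\varepsilon\mathcal I>0$, so $\Phi(A)$ is invertible and the right-hand side of \eqref{phi(A)} makes sense.

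The heart of the argument is the observation that for each index $j$ the $2r\times 2r$ matrix
$$\begin{pmatrix}\lambda_j Q_j & Q_j\\ Q_j & \lambda_j^{-1}Q_j\end{pmatrix}=\begin{pmatrix}\lambda_j & 1\\ 1 & \lambda_j^{-1}\end{pmatrix}\otimes Q_j$$
is positive semidefinite, being the Kronecker product of the rank-one positive semidefinite matrix $\left(\begin{smallmatrix}\lambda_j & 1\\ 1 & \lambda_j^{-1}\end{smallmatrix}\right)=\left(\begin{smallmatrix}\lambda_j^{1/2}\\ \lambda_j^{-1/2}\end{smallmatrix}\right)\!\left(\begin{smallmatrix}\lambda_j^{1/2} & \lambda_j^{-1/2}\end{smallmatrix}\right)$ with $Q_j\ge 0$. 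Summing over $j$ then yields
$$\begin{pmatrix}\Phi(A) & \mathcal I\\ \mathcal I & \Phi(A^{-1})\end{pmatrix}\ge 0,$$
and applying the Schur complement criterion at the invertible corner $\Phi(A)$ gives exactly $\Phi(A^{-1})\ge \Phi(A)^{-1}$.

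The point that needs care — and the reason I route through the spectral resolution of $A$ rather than simply applying $\Phi$ entrywise to the obviously positive block matrix $\left(\begin{smallmatrix}A & \mathcal I\\ \mathcal I & A^{-1}\end{smallmatrix}\right)$ — is that a positive linear map need not be $2$-positive, so a blockwise application of $\Phi$ need not preserve positivity. Working over the spectral decomposition evades this, since there $\Phi$ acts only on the projections $P_j$ while the scalars $\lambda_j$ are carried along unchanged. A slicker but less self-contained alternative would be to invoke the Davis--Choi (operator Jensen) inequality $f(\Phi(A))\le\Phi(f(A))$, valid for operator convex $f$ and unital positive $\Phi$, applied to $f(t)=t^{-1}$, whose operator convexity on $(0,\infty)$ is a restatement of the inequality $!_\lambda\le\nabla_\lambda$ recalled in the Introduction.
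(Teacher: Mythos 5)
Your argument is correct, but note that the paper offers no proof of this lemma at all: it is quoted from \cite{Bhatia1} as a known preliminary, so there is nothing internal to compare against. What you have written is essentially the standard proof of Choi's inequality from that reference: spectral resolution $A=\sum_j\lambda_jP_j$, positivity of each block $\left(\begin{smallmatrix}\lambda_j & 1\\ 1 & \lambda_j^{-1}\end{smallmatrix}\right)\otimes\Phi(P_j)$ as a Kronecker product of a rank-one positive matrix with a positive matrix, summation to get $\left(\begin{smallmatrix}\Phi(A) & \mathcal I\\ \mathcal I & \Phi(A^{-1})\end{smallmatrix}\right)\geq 0$, and the Schur complement test at the invertible corner. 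Each of these steps is sound, and your remark about why one cannot simply apply $\Phi$ blockwise to $\left(\begin{smallmatrix}A & \mathcal I\\ \mathcal I & A^{-1}\end{smallmatrix}\right)$ (positive maps need not be $2$-positive) is exactly the right point of care; the spectral route is what makes the blockwise image positive anyway. Your most valuable observation is that the lemma as printed omits the hypothesis $\Phi(\mathcal I)=\mathcal I$, without which the inequality is false (take $\Phi=c\,\mathrm{id}$ with $0<c<1$); the paper does use the lemma later (in Theorem \ref{theorem_square_sigma}) for a unital $\Phi$, so no downstream damage occurs, but the statement should be corrected. The alternative you sketch via the Davis--Choi inequality for the operator convex function $f(t)=t^{-1}$ is also valid and shorter, though it trades self-containedness for a heavier black box.
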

It is of potential interest to investigate the accretive version of this result. Theorem \ref{choi_davis_accretive} below present this interest for any matrix concave function.
\begin{lemma}\cite{Bhatia2}\label{normA+B}
Let $ A, B\in\mathcal{M}_n^+ $.  Then for any unitarily invariant norm $||\cdot||$, 
\begin{equation}
\parallel AB \parallel \leq\ \dfrac{1}{4}\parallel (A+B)^{2} \parallel. 
\end{equation}
\end{lemma}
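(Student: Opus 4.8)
The inequality concerns an arbitrary unitarily invariant norm, so the natural strategy is to pass to singular values and then to exploit the positivity of a $2\times 2$ block matrix. Since $(A+B)^2\in\mathcal{M}_n^{+}$, the singular values of $(A+B)^2$ are its eigenvalues, and by Ky Fan's dominance theorem the asserted inequality for all unitarily invariant norms is equivalent to the system of weak majorizations $\sum_{j=1}^{k}s_j(AB)\le\tfrac14\sum_{j=1}^{k}\lambda_j((A+B)^2)$, $k=1,\dots,n$ (here $s_j$ and $\lambda_j$ denote singular values resp. eigenvalues in decreasing order). The plan is to aim directly for the stronger pointwise bound $s_j(AB)\le\tfrac14\lambda_j((A+B)^2)=\big(\tfrac12\lambda_j(A+B)\big)^2$ for each $j$.

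The engine is the arithmetic--geometric mean inequality for singular values: for any $X,Y\in\mathcal{M}_n$ one has $2\,s_j(XY^{*})\le\lambda_j(X^{*}X+Y^{*}Y)$. I would prove this by considering the positive semidefinite block matrix $M=\left(\begin{smallmatrix}XX^{*}&XY^{*}\\ YX^{*}&YY^{*}\end{smallmatrix}\right)$, observing that its nonzero eigenvalues coincide with those of $X^{*}X+Y^{*}Y$, and pinching with the unitary $U=\mathcal{I}\oplus(-\mathcal{I})$: since $\left(\begin{smallmatrix}0&XY^{*}\\ YX^{*}&0\end{smallmatrix}\right)=\tfrac12 M-\tfrac12\,U\,M\,U$ with the subtracted matrix positive, monotonicity of eigenvalues gives $\lambda_j\!\left(\begin{smallmatrix}0&XY^{*}\\ YX^{*}&0\end{smallmatrix}\right)\le\tfrac12\lambda_j(M)$, and the positive eigenvalues on the left are exactly $s_1(XY^{*})\ge s_2(XY^{*})\ge\cdots$. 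Applying this with $X=A^{1/2}$, $Y=B^{1/2}$ yields $s_j(A^{1/2}B^{1/2})\le\tfrac12\lambda_j(A+B)$; since $A^{1/2}B^{1/2}$ is similar to the positive semidefinite matrix $B^{1/2}AB^{1/2}$ (first when $A,B$ are definite, then by continuity), this gives at once the eigenvalue estimate $\lambda_j(AB)=s_j(A^{1/2}B^{1/2})^2\le\tfrac14\lambda_j((A+B)^2)$. The cruder choice $X=A$, $Y=B$ gives, for free, the weaker companion bound $s_j(AB)\le\tfrac12\lambda_j(A^{2}+B^{2})$, i.e. $\|AB\|\le\tfrac12\|A^{2}+B^{2}\|$.

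The step I expect to be the main obstacle is the passage from the eigenvalue bound to the singular value bound: a unitarily invariant norm is controlled by $s_j(AB)$, which can strictly exceed $\lambda_j(AB)$, so the clean chain above does not on its own deliver $\|AB\|\le\tfrac14\|(A+B)^2\|$, and iterating the singular--value AGM inequality only reproduces the weaker $\tfrac12\|A^{2}+B^{2}\|$ estimate. Overcoming this is precisely the content of the cited theorem of Bhatia and Kittaneh; it requires a finer block--matrix and majorization analysis that produces the inequality $s_j(AB)\le\tfrac14\lambda_j((A+B)^2)$ at the level of singular values. Granting that inequality, one recovers $\|AB\|\le\tfrac14\|(A+B)^2\|$ for every unitarily invariant norm by applying the associated symmetric gauge function (equivalently, by summing over $j$ and invoking Ky Fan once more), and the special case $A,B\in\mathcal{M}_n^{+}$ falls under the same statement.
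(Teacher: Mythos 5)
First, be aware that the paper does not prove this statement: it is quoted from Bhatia and Kittaneh \cite{Bhatia2} and used as a black box (e.g.\ in the proof of Theorem \ref{theorem_square_sigma}), so there is no in-paper argument to compare against and your proposal must stand on its own.

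As a proof it has a genuine gap, which to your credit you identify yourself. The block-matrix pinching argument is fine and correctly yields $2s_j(X^*Y)\le\lambda_j(XX^*+YY^*)$; from it you correctly get the eigenvalue estimate $\lambda_j(AB)=s_j(A^{1/2}B^{1/2})^2\le\tfrac14\lambda_j\left((A+B)^2\right)$ and, with $X=A$, $Y=B$, the genuine but weaker norm bound $\|AB\|\le\tfrac12\|A^2+B^2\|$. But $AB$ is not normal, so $s_j(AB)$ can strictly exceed $\lambda_j(AB)$ (Weyl's inequalities run in the wrong direction), and $(A+B)^2\le 2(A^2+B^2)$ shows that $\tfrac14\|(A+B)^2\|$ is the \emph{stronger} of the two targets. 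Hence everything you actually establish falls short of the lemma, and the step you explicitly ``grant'' --- passing from the eigenvalue bound to a singular-value (weak-majorization) bound --- is precisely the entire content of the result. Moreover, the specific statement you propose to borrow, the pointwise inequality $4s_j(AB)\le\lambda_j\left((A+B)^2\right)$, is not proved in \cite{Bhatia2}: that paper establishes only the unitarily invariant norm inequality, by a direct majorization argument, and poses the pointwise singular-value version as an open problem, which was settled only years later (by Drury). So as written the central inequality is assumed rather than proved; to complete the argument you would need to reproduce the Bhatia--Kittaneh norm-level argument or supply a proof of the pointwise singular-value bound.
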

The following characterization was given in \cite{ando_hiai} for matrix monotone functions.
\begin{lemma}\cite{ando_hiai}\label{lemma_ando_hiai} Let $ A, B \in\mathcal{M}_n^+ $  and $ f\in\mathfrak{m}. $  Then
\begin{equation}
f(A)\sharp f(B) \leq f(A\nabla B).
\end{equation}
\end{lemma}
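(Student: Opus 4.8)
The plan is to obtain the inequality by chaining two facts that are already available in the excerpt: every $f\in\mathfrak m$ is matrix concave, and the geometric mean is dominated by the arithmetic mean. First I would note that, since $f\in\mathfrak m$ is matrix monotone increasing with range in $(0,\infty)$, Proposition \ref{oper_intro_prop}(ii) applies, so $f$ is matrix concave on $(0,\infty)$. Because $A,B\in\mathcal M_n^+$ have spectra in $(0,\infty)$, so does $A\nabla B=\tfrac{A+B}{2}$, and matrix concavity with weights $\tfrac12,\tfrac12$ gives
\[
f(A\nabla B)=f\!\left(\frac{A+B}{2}\right)\ \ge\ \frac{f(A)+f(B)}{2}=f(A)\nabla f(B).
\]

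Next, because $f$ is positive-valued on $(0,\infty)$, the matrices $f(A)$ and $f(B)$ are positive definite, so $f(A)\sharp f(B)$ is well defined, and the matrix AM--GM inequality recalled in the Introduction (the case $\lambda=\tfrac12$ of $\sharp_\lambda\le\nabla_\lambda$) yields $f(A)\sharp f(B)\le f(A)\nabla f(B)$. Combining this with the previous display,
\[
f(A)\sharp f(B)\ \le\ f(A)\nabla f(B)\ \le\ f(A\nabla B),
\]
which is exactly the assertion.

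I do not expect a genuine obstacle here; the argument is essentially two lines once the right tools are named. The only points I would be careful about are that the left-hand geometric mean is legitimate --- which is guaranteed by $f>0$ on $(0,\infty)$, forcing $f(A),f(B)$ to be invertible and positive --- and that I invoke the correct direction of the monotone/concave dictionary, namely part (ii) of Proposition \ref{oper_intro_prop}. Should a proof independent of Proposition \ref{oper_intro_prop} be preferred, the same conclusion follows from Hansen's representation $f(x)=\int_0^1(1!_tx)\,d\nu_f(t)$ of Lemma \ref{hansen_lem}: joint concavity of $(X,Y)\mapsto X!_tY$ gives $\mathcal I!_t(A\nabla B)\ge(\mathcal I!_tA)\nabla(\mathcal I!_tB)$ pointwise in $t$, and integrating against the probability measure $\nu_f$ recovers $f(A\nabla B)\ge f(A)\nabla f(B)$, after which $\sharp\le\nabla$ finishes as before.
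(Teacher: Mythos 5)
The paper does not actually prove this statement: it is listed among the preliminaries and quoted verbatim from Ando--Hiai \cite{ando_hiai}, so there is no in-paper argument to compare against. Your proof is correct and self-contained, and it is the natural elementary derivation: Proposition \ref{oper_intro_prop}(ii) gives matrix concavity of $f$, hence $f(A\nabla B)\ge f(A)\nabla f(B)$, and the AM--GM inequality $\sharp\le\nabla$ (valid since $f(A),f(B)>0$, as you check) finishes the chain $f(A)\sharp f(B)\le f(A)\nabla f(B)\le f(A\nabla B)$. Your alternative route through Hansen's integral representation and the joint concavity of $(X,Y)\mapsto X!_tY$ is also sound and is more in the spirit of how this paper manipulates means elsewhere. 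The only contextual remark worth making is that in \cite{ando_hiai} this inequality is not merely a corollary of concavity but one half of a characterization: for continuous $f\ge 0$ the mixed inequality $f(A)\sharp f(B)\le f(A\nabla B)$ is \emph{equivalent} to matrix monotonicity, which is why Ando and Hiai treat it with more care than the one-directional argument you give. For the purposes of this paper, which only uses the stated direction, your two-line proof suffices.
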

The extension of this lemma to accretive matrices can be found in Theorem \ref{theorem_sharp_nabla}  below.

\begin{lemma}\label{lemma_f_of_norm_f}\cite{Hiai} Let $A\in\mathcal{M}_n^+$  and let $\|\cdot\|$ be a normalized unitarily invariant norm. If $f\in\mathfrak{m},$ then
$$f(\|A\|)\leq \|f(A)\|.$$
\end{lemma}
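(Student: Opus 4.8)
The plan is to reduce the claimed norm inequality to the operator inequality
\[
f(A)\ \ge\ \frac{f(\|A\|)}{\|A\|}\,A
\]
and then to invoke the monotonicity of unitarily invariant norms on the positive semidefinite cone.

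First I would record a purely scalar observation. Since $f\in\mathfrak{m}$, $f$ is concave on $(0,\infty)$ (being matrix concave, by Proposition \ref{oper_intro_prop}(ii)) and strictly positive; from this it follows that the map $t\mapsto f(t)/t$ is non-increasing on $(0,\infty)$. Indeed, for $0<s\le t$ and $0<\varepsilon<s$ one writes $s$ as a convex combination of $\varepsilon$ and $t$, applies concavity together with $f(\varepsilon)>0$, and lets $\varepsilon\to0^{+}$; this gives $f(s)/s\ge f(t)/t$.

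Next I would use the normalization of $\|\cdot\|$ to bound the spectrum of $A$ by $\|A\|$. If $P$ is a rank-one projection onto an eigenvector of $A$ for its largest eigenvalue $\lambda_{\max}(A)$, then $A\ge\lambda_{\max}(A)P\ge 0$, hence $\|A\|\ge\lambda_{\max}(A)\,\|P\|=\lambda_{\max}(A)$. Consequently every eigenvalue $\lambda$ of $A$ satisfies $\lambda\le\|A\|$, so the previous paragraph yields $f(\lambda)\ge \dfrac{f(\|A\|)}{\|A\|}\,\lambda$ for each such $\lambda$. Since $f(A)$ and $A$ are simultaneously diagonalizable, this eigenvalue-wise inequality upgrades to $f(A)\ge \dfrac{f(\|A\|)}{\|A\|}\,A\ge 0$. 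Applying monotonicity of the unitarily invariant norm ($0\le X\le Y\Rightarrow\|X\|\le\|Y\|$) then gives
\[
\|f(A)\|\ \ge\ \frac{f(\|A\|)}{\|A\|}\,\|A\|\ =\ f(\|A\|),
\]
which is the assertion.

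The argument is short and I do not anticipate a genuine obstacle; the two points needing (easy) care are the monotonicity of $t\mapsto f(t)/t$, a standard consequence of concavity and positivity, and the estimate $\|A\|\ge\lambda_{\max}(A)$, which is precisely where normalization of the norm enters. Normalization cannot be dropped: for $f(t)=\sqrt{t}$, the non-normalized norm $\frac{1}{2}\|\cdot\|_{\mathrm{op}}$, and $A=[4]\in\mathcal{M}_1$, one has $\|f(A)\|=1<\sqrt{2}=f(\|A\|)$.
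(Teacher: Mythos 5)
The paper does not prove this lemma at all; it is quoted verbatim from Hiai--Zhan \cite{Hiai} as a known preliminary, so there is no in-paper argument to measure yours against. Your proof is correct and self-contained. The chain is sound: scalar concavity of $f$ (which follows from matrix concavity via Proposition \ref{oper_intro_prop}(ii)) plus $f\ge 0$ gives that $t\mapsto f(t)/t$ is non-increasing; normalization gives $\|P\|=1$ for rank-one projections and hence $\|A\|\ge\lambda_{\max}(A)$ via $A\ge\lambda_{\max}(A)P$ and the monotonicity of unitarily invariant norms on the positive cone (Weyl monotonicity of eigenvalues plus Fan dominance); the eigenvalue-wise bound $f(\lambda)\ge \frac{f(\|A\|)}{\|A\|}\lambda$ then transfers to the operator inequality $f(A)\ge \frac{f(\|A\|)}{\|A\|}A$ because both sides are diagonal in the same basis, and one more application of norm monotonicity finishes. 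Two remarks: first, your argument never uses matrix monotonicity beyond scalar concavity and nonnegativity, so you have in fact proved the stronger statement that the inequality holds for every nonnegative concave $f$ on $(0,\infty)$, which is in the spirit of the Bourin--Uchiyama refinement mentioned after Lemma \ref{lemma_ando_zhan}; second, your counterexample showing that normalization cannot be dropped is correct and is a nice sanity check that the hypothesis enters exactly at the estimate $\|A\|\ge\lambda_{\max}(A)$.
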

Proposition \ref{prop_f_norm_of_f}  below provides the accretive version of this lemma.

It is well-known that a concave function $f$ with $f(0)\geq 0$ is subadditive in the sense that 
\begin{equation}\label{conc_sub_intro}
f(a+b)\leq f(a)+f(b),
\end{equation} 
 for the non-negative numbers $a,b.$ A similar inequality is not necessarily valid for matrix concave functions. That is, an matrix concave function $f$ does not necessarily satisfy 
$$f(A+B)\leq f(A)+f(B),$$ for the positive matrices $A,B.$ In 1999, Ando and Zhan \cite{ando_zhan} proved a subadditivity inequality for $f\in\mathfrak{m}$.
\begin{lemma}\label{lemma_ando_zhan}\cite{ando_zhan} Let $A, B\in \mathcal{M}_n^+$. Then for any unitarily invariant norm ${{\left\| \cdot \right\|}}$ and any $f\in\mathfrak{m}$,
\begin{equation}\label{17}
{{\left\| f\left( A+B \right) \right\|}}\le {{\left\| f\left( A \right)+f\left( B \right) \right\|}}.
\end{equation}
\end{lemma}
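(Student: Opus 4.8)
The plan is to follow Ando and Zhan \cite{ando_zhan}: pass to a family of eigenvalue inequalities, strip off the additive part of $f$, and reduce everything to a single ``trace'' case.

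\smallskip\noindent\textit{Step 1 (reduction to Ky Fan sums; normalization).} Since $f(A+B)$ and $f(A)+f(B)$ are positive, by Ky Fan's dominance theorem it suffices to prove
\[ \sum_{j=1}^{k}\lambda_j\big(f(A+B)\big)\ \le\ \sum_{j=1}^{k}\lambda_j\big(f(A)+f(B)\big),\qquad k=1,\dots ,n, \]
where $\lambda_1\ge\lambda_2\ge\cdots$ are eigenvalues in decreasing order. Next, as $f\in\mathfrak m$ is monotone with range in $(0,\infty)$, the limit $c:=f(0^{+})\ge 0$ exists and $g:=f-c$ is operator monotone, nonnegative, with $g(0^{+})=0$; from $f(A+B)=cI+g(A+B)$, $f(A)+f(B)=2cI+g(A)+g(B)$ and $c\ge 0$ one checks that the Ky Fan inequalities for $g$ force those for $f$. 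So I would assume henceforth that $f(0)=0$.

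\smallskip\noindent\textit{Step 2 (the trace case).} Put $T=(A^{1/2}\ \ B^{1/2})$, an $n\times 2n$ matrix, so that $A+B=TT^{*}$ while
\[ T^{*}T=\begin{pmatrix} A & A^{1/2}B^{1/2}\\[2pt] B^{1/2}A^{1/2} & B\end{pmatrix}\in\mathcal M_{2n}. \]
Because $f(0)=0$, the matrices $f(T^{*}T)$ and $f(A+B)$ have the same eigenvalues up to extra zeros, so $\mathrm{tr}\,f(A+B)=\mathrm{tr}\,f(T^{*}T)$. Pinching $T^{*}T$ onto its block diagonal $\mathrm{diag}(A,B)$ only moves its eigenvalue vector down in the majorization order, and $\mu\mapsto\sum_i f(\mu_i)$ is Schur concave since $f$ is concave (Proposition~\ref{oper_intro_prop}(ii)); hence
\[ \mathrm{tr}\,f(A+B)=\mathrm{tr}\,f(T^{*}T)\ \le\ \mathrm{tr}\,f\big(\mathrm{diag}(A,B)\big)=\mathrm{tr}\,f(A)+\mathrm{tr}\,f(B), \]
which settles the case $k=n$, i.e.\ Lemma~\ref{lemma_ando_zhan} for the trace norm.

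\smallskip\noindent\textit{Step 3 (general $k$).} Let $P$ be the orthogonal projection onto the span of eigenvectors of $A+B$ for its $k$ largest eigenvalues. Since $P$ commutes with $A+B$, hence with $f(A+B)$,
\[ \sum_{j=1}^{k}\lambda_j\big(f(A+B)\big)=\mathrm{tr}\big(Pf(A+B)P\big)=\mathrm{tr}\,f\big(PAP+PBP\big), \]
the last trace being taken on $\operatorname{ran}P\cong\mathbb C^{k}$. Applying Step 2 in dimension $k$,
\[ \sum_{j=1}^{k}\lambda_j\big(f(A+B)\big)\ \le\ \mathrm{tr}\,f(PAP)+\mathrm{tr}\,f(PBP) \]
(compressions taken on $\operatorname{ran}P$), and one is left to bound the right-hand side by $\sum_{j=1}^{k}\lambda_j(f(A)+f(B))$, using $\sum_{j\le k}\lambda_j(X)=\max\{\mathrm{tr}(XE):E=E^{*}=E^{2},\ \operatorname{rank}E=k\}$.

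\smallskip\noindent\textit{Main obstacle.} The hard part is precisely this last comparison in Step 3. For the unital positive map $X\mapsto PXP$, the Choi--Davis operator Jensen inequality for the operator concave $f$ runs the \emph{unfavourable} way, $f(PXP)\ge Pf(X)P$, so one cannot simply compress $f(A)$ and $f(B)$; and estimating $\mathrm{tr}\,f(PAP)$ and $\mathrm{tr}\,f(PBP)$ \emph{separately} by eigenvalue interlacing yields only $\sum_{j\le k}\lambda_j f(A)+\sum_{j\le k}\lambda_j f(B)$, which in general strictly exceeds $\sum_{j\le k}\lambda_j(f(A)+f(B))$. Overcoming this is the essence of Ando and Zhan's argument: one must use operator \emph{monotonicity} of $f$, not merely its concavity --- e.g.\ via the integral representation $f(x)=\int_{0}^{1}(1!_t x)\,d\nu_f(t)$ of Lemma~\ref{hansen_lem} (equivalently L\"owner's representation, \cite{bhatia_matrix}) --- and treat the two compressions jointly rather than one at a time. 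I expect this to be the crux. A tempting shortcut --- proving the inequality for the resolvent functions $x\mapsto x/(x+s)$ and integrating against $d\nu_f$ --- does not work, since subadditivity for a unitarily invariant norm is not a linear condition and cannot be pushed through the integral.
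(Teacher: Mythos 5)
This statement is quoted in the paper as a known result of Ando and Zhan, with a citation to \cite{ando_zhan} and no proof supplied; so there is no in-paper argument to compare yours against, and your attempt has to stand on its own as a proof. It does not: it is explicitly incomplete. Steps 1 and 2 are correct and standard --- the reduction to Ky Fan sums via the dominance theorem, the normalization $g=f-f(0^{+})$, and the Rotfel'd-type trace argument using $T=(A^{1/2}\ B^{1/2})$, the equality of the nonzero spectra of $TT^{*}$ and $T^{*}T$, pinching, and Schur concavity of $\mu\mapsto\sum_i f(\mu_i)$. But the lemma is equivalent to the full family of Ky Fan inequalities, and in Step 3 you stop exactly at the one comparison that carries all the content, namely bounding $\mathrm{tr}\,f(PAP)+\mathrm{tr}\,f(PBP)$ by $\sum_{j\le k}\lambda_j\bigl(f(A)+f(B)\bigr)$. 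Your own discussion correctly explains why the obvious tools fail there (the Choi--Davis inequality $f(PXP)\ge Pf(X)P$ points the wrong way, and separate interlacing estimates overshoot), so what remains is precisely the theorem, not a routine verification. An honest statement that ``this is the crux'' is not a proof of the crux.

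A secondary concern is that your closing remarks may steer you away from the argument that actually closes the gap. It is true that one cannot integrate a norm (or Ky Fan sum) inequality against $d\nu_f$, since a supremum over projections does not commute with the integral; but the standard way to exploit L\"owner's representation here is to fix a \emph{single} projection $P$ (the top-$k$ spectral projection of $A+B$, which is simultaneously the top-$k$ spectral projection of $g(A+B)$ for every increasing $g$, since all these matrices commute with $A+B$) and push the \emph{linear} functional $X\mapsto\mathrm{tr}(PX)$ through the integral, proving the required trace-against-$P$ estimate for each elementary integrand $t\mapsto st/(s+t)$ and only at the very end using $\mathrm{tr}\bigl(P(f(A)+f(B))\bigr)\le\sum_{j\le k}\lambda_j\bigl(f(A)+f(B)\bigr)$. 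Some genuine work is still needed for the elementary integrands (note that the pointwise operator inequality $g_s(A+B)\le g_s(A)+g_s(B)$ is false in general, as a $2\times 2$ example with two non-commuting rank-one directions shows), and this is where operator monotonicity enters in Ando and Zhan's paper. As it stands, your proposal establishes the trace-norm case and a correct framework, but not the lemma.
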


 Bourin and Uchiyama \cite{bourin} showed that the condition matrix concavity in \eqref{17} can be replaced by scalar concavity. 

 The extension of \eqref{17} to accretive or sectorial matrices can be found in Theorem \ref{ando_zhan} below.
 
 In \cite{Gumus},  some inequalities among matrix means for positive matrices (i.e., $ ! $, $\sharp$, $\nabla$) were shown. We summarize these inequalities in the following proposition.
 \begin{proposition}\label{gumus}
 Let $m,M$ be positive scalars and let $A,B \in\mathcal{M}_n^+$ be such that $m\mathcal{I}\leq A,B\leq M\mathcal{I}.$ If $0<t<1$ and $\lambda=\min\lbrace t, 1-t\rbrace$, then
 \begin{align}\label{gumus_1}
A\nabla_{t} B\leq\dfrac{m\nabla_{\lambda}M}{m\sharp_{\lambda}M}A\sharp_{t}B,
\end{align}
\begin{align}\label{gumus_2}
A\sharp_{t} B\leq\dfrac{m\nabla_{\lambda}M}{m\sharp_{\lambda}M}A!_{t}B,
\end{align}
\begin{align}\label{gumus_3}
A\nabla_{t}B-M(\dfrac{m\nabla_{\lambda}M}{m\sharp_{\lambda}M}-1)\mathcal{I} \leq A\sharp_{t}B\leq M(\dfrac{m\nabla_{\lambda}M}{m\sharp_{\lambda}M}-1)\mathcal{I}+A!_{t}B.
\end{align}

 \end{proposition}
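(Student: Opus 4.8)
The plan is to reduce all three inequalities to one elementary scalar estimate, transfer it to matrices by the usual functional-calculus-and-congruence argument, and then deduce \eqref{gumus_3} from \eqref{gumus_1} and \eqref{gumus_2}.

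\emph{Scalar reduction.} Put $h=M/m\ge 1$, and for $0<t<1$ let $g_t(x)=\dfrac{(1-t)+tx}{x^{t}}=\dfrac{1\nabla_t x}{1\sharp_t x}$ on $(0,\infty)$, so that $\dfrac{m\nabla_\lambda M}{m\sharp_\lambda M}=g_\lambda(h)$. A direct computation gives $g_t'(x)=t(1-t)x^{-t-1}(x-1)$, so $g_t$ strictly decreases on $(0,1]$ and strictly increases on $[1,\infty)$; hence on $[1/h,h]$ it is maximized at an endpoint, and since $g_t(1/x)=g_{1-t}(x)$ this maximum equals $\max\{g_t(h),g_{1-t}(h)\}$. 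I would then show this maximum is $g_\lambda(h)$ with $\lambda=\min\{t,1-t\}$: assuming $t\le\frac12$ (the other case is symmetric), the required inequality $g_{1-t}(h)\le g_t(h)$ becomes, with $p=h^{1-t}$ and $q=h^{t}$ (so $p\ge q\ge 1$), simply $t\,q(p^{2}-1)\ge(1-t)\,p(q^{2}-1)$; writing $p=q^{r}$ with $r=(1-t)/t\ge 1$ and $w=\ln q\ge 0$, this is exactly $\sinh(rw)\ge r\sinh w$, which is elementary (e.g. because $\frac{d}{dr}(\sinh(rw)-r\sinh w)=w\cosh(rw)-\sinh w\ge w\cosh w-\sinh w\ge 0$). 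Running the same endpoint analysis on $x\mapsto(1-t)x^{t}+tx^{t-1}=\dfrac{1\sharp_t x}{1!_t x}$, whose supremum over $[1/h,h]$ coincides with that of $g_t$ via $x\mapsto 1/x$, yields $1\sharp_t x\le g_\lambda(h)\,(1!_t x)$ on $[1/h,h]$ as well.

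\emph{Transfer to matrices.} Given $m\mathcal I\le A,B\le M\mathcal I$, set $C=A^{-1/2}BA^{-1/2}>0$; then $\frac1h\mathcal I\le C\le h\mathcal I$. Applying the two scalar inequalities to $C$ through the functional calculus gives $(1-t)\mathcal I+tC\le g_\lambda(h)\,C^{t}$ and $C^{t}\le g_\lambda(h)\,((1-t)\mathcal I+tC^{-1})^{-1}$, and congruence by $A^{1/2}$ converts these respectively into $A\nabla_t B\le g_\lambda(h)(A\sharp_t B)$ and $A\sharp_t B\le g_\lambda(h)(A!_t B)$. Since $g_\lambda(h)=\dfrac{m\nabla_\lambda M}{m\sharp_\lambda M}$, these are precisely \eqref{gumus_1} and \eqref{gumus_2}.

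\emph{The difference form and the main obstacle.} For \eqref{gumus_3} I would use that $\sharp_t$ and $!_t$ are matrix means, so monotonicity and normalization force $m\mathcal I=m\mathcal I\,\sigma\,m\mathcal I\le A\,\sigma\,B\le M\mathcal I$ for $\sigma\in\{\sharp_t,!_t\}$, together with $g_\lambda(h)\ge 1$ (from $1\nabla_\lambda y\ge 1\sharp_\lambda y$). Then \eqref{gumus_1} gives $A\nabla_t B-A\sharp_t B\le(g_\lambda(h)-1)A\sharp_t B\le M(g_\lambda(h)-1)\mathcal I$, which is the left inequality of \eqref{gumus_3}, and \eqref{gumus_2} gives $A\sharp_t B-A!_t B\le(g_\lambda(h)-1)A!_t B\le M(g_\lambda(h)-1)\mathcal I$, the right one. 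The only genuinely delicate point is the scalar maximization: recognizing that the extremum of $g_t$ on $[1/h,h]$ is at an endpoint and then identifying which endpoint wins via the $\sinh$ inequality; the matrix steps (functional calculus, congruence, and the a priori bound $A\sigma B\le M\mathcal I$) are routine.
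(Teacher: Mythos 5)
The paper does not prove Proposition \ref{gumus} at all: it is quoted verbatim from the reference \cite{Gumus} as background, so there is no in-paper argument to compare yours against. Judged on its own, your proof is correct and complete. The scalar reduction is sound: with $g_t(x)=(1\nabla_t x)/(1\sharp_t x)$ one indeed has $g_t'(x)=t(1-t)x^{-t-1}(x-1)$, the symmetry $g_t(1/x)=g_{1-t}(x)$, and hence $\max_{[1/h,h]}g_t=\max\{g_t(h),g_{1-t}(h)\}$; your identification of the winner via $tq(p^2-1)\ge(1-t)p(q^2-1)$, equivalently $\sinh(rw)\ge r\sinh w$ for $r\ge1$, $w\ge0$, is right (it also follows from the monotonicity of $u\mapsto\sinh(u)/u$), and the observation that $(1\sharp_t x)/(1!_t x)=g_{1-t}(x)$ cleanly recycles the same bound for \eqref{gumus_2}. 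The transfer via $C=A^{-1/2}BA^{-1/2}$ with $\tfrac1h\mathcal I\le C\le h\mathcal I$ and congruence by $A^{1/2}$ is the standard mechanism, and the deduction of \eqref{gumus_3} from \eqref{gumus_1}--\eqref{gumus_2} using $A\sigma B\le M\mathcal I$ and $g_\lambda(h)\ge1$ is valid. This reduction to a scalar extremal problem followed by functional calculus and congruence is the same route taken in the cited source, so your argument supplies a correct, self-contained proof of a statement the present paper only imports.
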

Propsoitions \ref{sbbh}, \ref{sbbh1} and \ref{sbbh2} discusses possible accretive versions of this last proposition.

Besides, in \cite{Ando},   some relations for positive definite matrices have been shown as follows:
 \begin{align}
 (A\nabla B)\sharp (A!B)=A\sharp B ,\label{sharpando}
 \end{align}
 \begin{align}
 A\nabla_{t}(A\sharp_{s}B)\geq A\sharp_{s}(A\nabla_{t}B).\label{ts}
 \end{align}
The accretive versions of these relations can be found in Theorems \ref{theorem_mixed_sharp_nabla_har} and \ref{theorem_mixed_nabla_sharp} below.

\section{An alternative formula for $f(A)$}
Inspired by the positive case, in this section we present an easier formula for $f(A)$, when $A$ is accretive and $f\in\mathfrak{m}.$ In fact, the result treats more general matrices than accretive ones, which is needed in our analysis. We recall that for such parameters, $f(A)$ is defined by \eqref{dunford_intro}. Our main result in this section reads as follows.
\begin{theorem}\label{thm_main_1}
Let $f\in\mathfrak{m}$ and $A\in\mathcal{M}_n$ be any matrix with eigenvalues set $\lambda(A).$ If $\lambda(A)\cap (-\infty,0]=\phi$, then
$$f(A)=\int_{0}^{1}\mathcal{I}!_{t}A\;d\nu_{f}(t),$$ where $\nu_f$ is a probability measure on $[0,1].$
\end{theorem}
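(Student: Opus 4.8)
The plan is to reduce the matrix identity to the already-established scalar integral representation of Proposition \ref{prop_our_hansen} by means of the Dunford functional calculus. First I would fix $A\in\mathcal{M}_n$ with $\lambda(A)\cap(-\infty,0]=\phi$, and choose a simple closed curve $\Gamma$ lying in $\mathbb{C}\backslash(-\infty,0]$, inside the resolvent set of $A$, winding once around each eigenvalue of $A$. By definition \eqref{dunford_intro},
$$f(A)=\frac{1}{2\pi i}\int_{\Gamma}f(z)(z\mathcal{I}-A)^{-1}dz.$$
Now substitute the scalar representation from Proposition \ref{prop_our_hansen}, namely $f(z)=\int_{0}^{1}1!_t z\,d\nu_f(t)$, valid for every $z\in\Gamma$ since $\Gamma\subset\mathbb{C}\backslash(-\infty,0]$. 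This yields
$$f(A)=\frac{1}{2\pi i}\int_{\Gamma}\left(\int_{0}^{1}1!_t z\;d\nu_f(t)\right)(z\mathcal{I}-A)^{-1}dz.$$

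Next I would justify interchanging the two integrals, exactly as in the proof of Proposition \ref{prop_our_hansen}: the integrand $(t,z)\mapsto (1!_t z)(z\mathcal{I}-A)^{-1}$ is continuous (hence bounded in norm) on the compact set $[0,1]\times\Gamma$, so Fubini's theorem applies to each matrix entry. After swapping we get
$$f(A)=\int_{0}^{1}\left(\frac{1}{2\pi i}\int_{\Gamma}(1!_t z)(z\mathcal{I}-A)^{-1}dz\right)d\nu_f(t).$$
The inner integral is, by the Dunford calculus, the matrix $g_t(A)$ where $g_t(z)=1!_t z=\big((1-t)+tz^{-1}\big)^{-1}$; since $g_t$ is analytic on $\mathbb{C}\backslash(-\infty,0]$ and $\lambda(A)$ avoids $(-\infty,0]$, this is legitimate. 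So it remains to identify $g_t(A)$ with $\mathcal{I}!_t A$.

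For that identification I would argue that the Dunford calculus is a unital algebra homomorphism on analytic functions, so $g_t(A)=\big((1-t)\mathcal{I}+t A^{-1}\big)^{-1}$, where $A^{-1}$ is the Dunford calculus applied to $z\mapsto z^{-1}$ (which agrees with the usual inverse because $0\notin\lambda(A)$) — and this is precisely $\mathcal{I}!_t A$ by the definition of the weighted harmonic mean extended to matrices with spectra off $(-\infty,0]$. Alternatively, and perhaps more cleanly, one can verify $g_t(A)=\mathcal{I}!_t A$ on the Jordan form: both sides are rational functions of $A$ with no poles on $\lambda(A)$, and they agree as scalar functions, hence agree as matrix functions. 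Combining, $f(A)=\int_0^1 \mathcal{I}!_t A\,d\nu_f(t)$, as claimed. I expect the main (though modest) obstacle to be the careful bookkeeping in the Fubini step — ensuring the continuity/boundedness of the matrix-valued integrand on $[0,1]\times\Gamma$ and that $\Gamma$ can indeed be chosen in $\mathbb{C}\backslash(-\infty,0]$ simultaneously encircling $\lambda(A)$ — together with making explicit that the scalar-to-matrix passage for $g_t$ is valid; the algebraic identification $g_t(A)=\mathcal{I}!_t A$ is then routine.
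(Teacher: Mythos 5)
Your proof is correct, but it follows a genuinely different route from the paper's. You work directly from the Dunford integral: insert the scalar representation $f(z)=\int_0^1 1!_t z\,d\nu_f(t)$ of Proposition \ref{prop_our_hansen} into \eqref{dunford_intro}, swap the integrals by Fubini (the integrand is continuous, hence bounded, on the compact set $[0,1]\times\Gamma$), and identify the inner contour integral with $\mathcal{I}!_tA$ via the multiplicativity of the Riesz--Dunford calculus. The paper instead first verifies the identity for diagonalizable $A=V^{-1}D[\lambda_i]V$ by applying Proposition \ref{prop_our_hansen} to each eigenvalue, and then reaches general $A$ by density of diagonalizable matrices, continuity of $X\mapsto f(X)$ and of $X\mapsto\mathcal{I}!_tX$ on matrices with spectrum off $(-\infty,0]$, and dominated convergence. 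Your argument buys directness: no approximation, no appeal to continuity of the matrix functional calculus, and no domination step (the paper's dominating bound is only established pointwise in $t$ for $m$ large, which is the least tidy part of its proof). What it costs is reliance on the homomorphism property of the Dunford calculus, which the paper never states; you should make explicit that the poles of $g_t(z)=z/((1-t)z+t)$ lie at $0$ and $-t/(1-t)\in(-\infty,0]$, hence outside the region enclosed by $\Gamma$ (automatic, since a simple closed curve in the simply connected domain $\mathbb{C}\setminus(-\infty,0]$ has its interior contained in that domain), so that $g_t(A)=((1-t)\mathcal{I}+tA^{-1})^{-1}=\mathcal{I}!_tA$ as claimed. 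Your fallback verification on the Jordan form settles this point equally well. The existence of a suitable $\Gamma$ is not an extra gap on your part, since the paper's own definition \eqref{dunford_intro} presupposes it.
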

\begin{proof}
First, we show the result when $A$ is a diagonalizable matrix with $\lambda(A)\cap (-\infty,0]=\phi.$ So, let $A$ be such matrix and let $f\in\mathfrak{m}.$ If $A=V^{-1}D[\lambda_i]V,$ where $D[\lambda_i]$ is diagonal, then immediate calculations show that
\begin{align*}
\int_{0}^{1}(\mathcal{I}!_tA)d\nu_f(t)&=V^{-1}D\left[\int_{0}^{1}(1!_s\lambda_i)d\nu_f(t)\right]V\\
&=V^{-1}D\left[\frac{1}{2\pi i}\int_{\Gamma}\frac{f(z)}{z-\lambda_i}dz\right]V\\
&=\frac{1}{2\pi i}\int_{\Gamma}f(z)(z\mathcal{I}-A)^{-1}dz,
\end{align*}
where we have used Proposition \ref{prop_our_hansen} to obtain the second identity, noting that $\lambda_i\not\in (-\infty,0]$. This shows the result for diagonalizable matrices with no eigenvalues in $(-\infty,0].$\\
For the general case, let $A\in\mathcal{M}_n$ with no eigenvalue in $(-\infty,0]$. Since diagonalizable matrices are dense in $\mathcal{M}_n$, with respect to the operator norm, we can find a sequence of diagonalizable matrices $(A_m)$ such that $A_m\to A$ in the usual operator norm, \cite[Corollary 5.1]{denis}. Further, since $A_m\to A$ and $\lambda(A)\cap (-\infty,0]=\phi$, we may assume without loss of generality that $\lambda(A_m)\cap (-\infty,0]=\phi$ for all $m$, due to the continuity of the map that maps each matrix  $A\in\mathcal{M}_n$ to its eigenvalues set $\lambda(A)$, \cite[Proposition 5.2.2]{martin}. Since $\lambda(A_m)\cap (-\infty,0]=\phi,$ and $A_m$ is diagonalizable, the first part of the proof implies that
\begin{align}\label{nee1}
f(A_m)=\int_{0}^{1}\mathcal{I}!_t A_m\;d\nu_f(t).
\end{align}
Since $f\in\mathfrak{m},$ it is analytically continued to $D:=\mathbb{C}\backslash (-\infty,0]$, and hence it is $n-1$ continuously differentiable in $D$. Therefore, the mapping $A\to f(A)$ is a continuous mapping on the set of matrices with spectrum in $D$, \cite[Theorem 1.19]{high}. This continuity implies that $f(A_m)\to f(A),$ which in turns implies (by \eqref{nee1})
\begin{equation}\label{nee2}
\int_{0}^{1}\mathcal{I}!_t A_m\;d\nu_f(t)\to \frac{1}{2\pi i}\int_{\Gamma}f(z)(z\mathcal{I}-A)^{-1}dz,
\end{equation}
where $\Gamma$ is a simple closed curve in $D$ that surrounds the spectrum of $A$. It remains to show that 
$$\int_{0}^{1}\mathcal{I}!_t A_m\;d\nu_f(t)\to \int_{0}^{1}\mathcal{I}!_t A\;d\nu_f(t).$$
We first notice that for each $t\in [0,1]$, the mapping $X\to \mathcal{I}!_tX$ is continuous on the class of matrices with spectrum in $D$, \cite[Theorem 1.19]{high}. Consequently, for every $t\in [0,1],$
\begin{equation}\label{needed_dunford_3}
\|\mathcal{I}!_tA_m-\mathcal{I}!_tA\|_\infty \to 0.
\end{equation}
This means that, for large $m$,
\begin{align*}
\|\mathcal{I}!_tA_m-\mathcal{I}!_tA\|_\infty&\leq \|\mathcal{I}!_tA_m\|_\infty+\|\mathcal{I}!_tA\|_\infty\\
&\leq 2 \|\mathcal{I}!_tA\|_\infty+1,
\end{align*}
where the above inequality follows from \eqref{needed_dunford_3}.
Noting the latter inequality and applying the dominated Lebesgue convergence theorem imply
$$\int_{0}^{1}\|\mathcal{I}!_tA_m-\mathcal{I}!_tA\|_\infty\;d\nu_f(t)\to 0,$$ which shows that 
\begin{align}\label{needed_dunford_4}
\int_{0}^{1}\mathcal{I}!_tA_md\nu_f(t)\to \int_{0}^{1}\mathcal{I}!_tAd\nu_f(t).
\end{align}
This together with \eqref{nee2} complete the proof of the theorem.
\end{proof}
Theorem \ref{thm_main_1} will be a key result in the subsequent sections. We notice that the statement of the theorem applies for accretive matrices, since the eigenvalues of such matrices are not in $(-\infty,0].$
\section{The geometric mean of accretive matrices}
In this section, we explore more properties of the geometric mean of accretive matrices. Our first observation is that the definition given in \eqref{raissouli_def} is consistent with \eqref{mean_har_prob_intro}. This provides a better understanding that geometric mean of accretive matrices follow the same rule as that of positive ones.

In \cite{raissouli}, it is shown that the definition of the weighted geometric mean for accretive matrices given in \eqref{raissouli_def} is equivalent to

\begin{align*}
    A\sharp_{\lambda}B=\int_{0}^{1}A!_{t}B\;d\nu_{\lambda}(t),
\end{align*}
where $d\nu_{\lambda}(t)=\frac{\sin(\lambda \pi)}{\pi}\frac{t^{\lambda-1}}{(1-t)^{\lambda}}dt$ is a probability measure.

The fact that $d\nu_{\lambda}(t)$ is a probability measure follows immediately on letting $A=B=\mathcal{I}.$

In the following result, we show that the definition in \eqref{raissouli_def} is consistent with that for positive matrices. It should be remarked that this result has been shown in \cite{drury} for $\lambda=\frac{1}{2}.$

\begin{theorem}\label{thm} Let $ A, B\in\mathcal{M}_n $ be two accretive matrices and $0<\lambda<1$. Then 
\begin{align}
A\sharp_{\lambda} B=A^{\frac{1}{2}}(A^{\frac{-1}{2}}BA^{\frac{-1}{2}})^{\lambda}A^{\frac{1}{2}},
\end{align}
where $(A^{\frac{-1}{2}}BA^{\frac{-1}{2}})^{\lambda}$ is defined via the Dunford integral as in \eqref{def_frac_power_intro}.
\end{theorem}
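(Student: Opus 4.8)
The plan is to reduce the accretive statement to the positive-definite case by exploiting the integral representation established in Proposition \ref{prop_our_hansen} together with the formula already recorded for $A\sharp_\lambda B$ in terms of the harmonic mean. Specifically, take $f(z)=z^{\lambda}$ on $\mathbb{C}\backslash(-\infty,0]$. Although $f\notin\mathfrak{m}$ in the normalized sense used elsewhere (it maps $(0,\infty)$ to $(0,\infty)$, is matrix monotone, and $f(1)=1$, so in fact it \emph{is} in $\mathfrak{m}$), Lemma \ref{hansen_lem} gives a probability measure $\nu_\lambda$ on $[0,1]$ with $x^\lambda=\int_0^1 (1!_t x)\,d\nu_f(t)$ for $x>0$, and one identifies $\nu_f=\nu_\lambda$ by matching against the known representation $A\sharp_\lambda B=\int_0^1 A!_t B\,d\nu_\lambda(t)$ with $d\nu_\lambda(t)=\tfrac{\sin(\lambda\pi)}{\pi}\tfrac{t^{\lambda-1}}{(1-t)^\lambda}dt$ (evaluate at scalars, i.e. $1\sharp_\lambda x = x^\lambda$, to pin down the measure). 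By Proposition \ref{prop_our_hansen}, this same representation $z^\lambda=\int_0^1 1!_t z\,d\nu_\lambda(t)$ persists for all $z\in\mathbb{C}\backslash(-\infty,0]$.

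Next I would apply Theorem \ref{thm_main_1} with this $f$: for \emph{any} matrix $X$ with $\lambda(X)\cap(-\infty,0]=\phi$ one has
$$X^{\lambda}=\int_0^1 \mathcal{I}!_t X\,d\nu_\lambda(t),$$
where the left side is the Dunford-integral fractional power \eqref{def_frac_power_intro}. Now set $X=A^{-1/2}BA^{-1/2}$. Here $A^{1/2}$ is itself defined via the Dunford integral since $A$ is accretive (its eigenvalues avoid $(-\infty,0]$); moreover $A^{1/2}$ is invertible with the same spectral property, so $X$ is well defined. The key point is that $X$ has no eigenvalue in $(-\infty,0]$: indeed $X=A^{-1/2}BA^{-1/2}$ is similar to $A^{-1}B$, and since $A,B$ are accretive, $A^{-1}B$ has spectrum off $(-\infty,0]$ — this can be seen, e.g., from the fact that accretive matrices form a cone invariant under inversion and products of accretive matrices have spectrum in the open right half-plane's "doubled sector", hence certainly off the negative axis. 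So Theorem \ref{thm_main_1} applies to $X$, giving $X^\lambda=\int_0^1 \mathcal{I}!_t X\,d\nu_\lambda(t)$.

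The final step is to conjugate by $A^{1/2}$ and move it inside the integral. Using the transformer identity $C^{*}(P!_t Q)C=(C^{*}PC)!_t(C^{*}QC)$ for invertible $C$ — valid for positive $P,Q$ and, by analytic continuation in the entries or by a direct resolvent computation, for the relevant accretive arguments — with $C=A^{1/2}$ (note $A^{1/2}$ need not be Hermitian, so one uses the congruence $A^{1/2}(\cdot)A^{1/2}$ rather than $C^*(\cdot)C$; the harmonic mean $P!_t Q=((1-t)P^{-1}+tQ^{-1})^{-1}$ transforms correctly under $P\mapsto SPS$, $Q\mapsto SQS$ for any invertible $S$), I get
$$A^{1/2}\!\left(\int_0^1 \mathcal{I}!_t X\,d\nu_\lambda(t)\right)\!A^{1/2}=\int_0^1 A^{1/2}(\mathcal{I}!_t X)A^{1/2}\,d\nu_\lambda(t)=\int_0^1 A!_t B\,d\nu_\lambda(t)=A\sharp_\lambda B,$$
where pulling $A^{1/2}(\cdot)A^{1/2}$ through the integral is justified by continuity/linearity of the congruence map. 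Combining, $A^{1/2}X^\lambda A^{1/2}=A\sharp_\lambda B$, which is the claim. The main obstacle I anticipate is the bookkeeping around $A^{1/2}$: verifying that $A^{1/2}$ for accretive $A$ behaves as expected (invertibility, that $(A^{1/2})^{-1}=(A^{-1})^{1/2}$, spectrum off $(-\infty,0]$) and, more delicately, that the congruence identity for $!_t$ and the interchange of $\int_0^1$ with $A^{1/2}(\cdot)A^{1/2}$ are legitimate for non-Hermitian $A^{1/2}$ and for matrix arguments whose numerical ranges are only in a sector, not in $\mathcal{M}_n^+$; this should follow from the explicit inverse-sum formula for $!_t$ plus dominated convergence exactly as in the proof of Theorem \ref{thm_main_1}, but it is where the care is needed.
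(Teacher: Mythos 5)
Your proposal is correct and follows essentially the same route as the paper: both arguments rest on showing that the spectrum of $A^{-\frac{1}{2}}BA^{-\frac{1}{2}}$ avoids $(-\infty,0]$ (the paper does this via Drury's eigenvector computation $y^{*}By=\mu\, y^{*}Ay$, which is the clean version of the fact you gesture at), then applying Theorem \ref{thm_main_1} with the measure $d\nu_{\lambda}(t)=\frac{\sin(\lambda\pi)}{\pi}\frac{t^{\lambda-1}}{(1-t)^{\lambda}}dt$ and conjugating the harmonic-mean integral by $A^{\pm\frac{1}{2}}$. The only cosmetic difference is the direction of the congruence (the paper computes $A^{-\frac{1}{2}}(A\sharp_{\lambda}B)A^{-\frac{1}{2}}$ rather than conjugating $X^{\lambda}$ by $A^{\frac{1}{2}}$), which is the same calculation.
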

\begin{proof}
In order to use Theorem \ref{thm_main_1},  we first show that the eigenvalues of $A^{\frac{-1}{2}}BA^{\frac{-1}{2}}$ are not in $(-\infty,0].$ The proof of this fact was given in \cite{drury}, but we present it here for the sake of convenience for the reader. So, let $ \mu $ be an eigenvalue of $ A^{\frac{-1}{2}}BA^{\frac{-1}{2}} $. Then, there is a nonzero vector $ x $ such that $ A^{\frac{-1}{2}}BA^{\frac{-1}{2}}x=\mu x $. Putting $ y= A^{\frac{-1}{2}}x$, we get $ By=\mu Ay $. Therefore, $ y^{*}By=\mu y^{*}Ay  $, and since $ A,B $ are accretive matrices, then $ \mu $ does not lie on $(-\infty,0].$\\
Using $d\nu_{\lambda}(t)= \dfrac{\sin(\lambda\pi)}{\pi}\dfrac{t^{\lambda-1}}{(1-t)^{\lambda}},$ we have
\begin{align*}
A^{\frac{-1}{2}}(A\sharp_{\lambda} B)A^{\frac{-1}{2}} &=
\int_{0}^{1}A^{\frac{-1}{2}}(A!_{t}B)A^{\frac{-1}{2}}\ \ d\nu_{\lambda}(t)\nonumber\\
 &=\int_{0}^{1}((1-t)\mathcal{I}+tA^{\frac{1}{2}}B^{-1}A^{\frac{1}{2}})^{-1}\ \ d\nu_{\lambda}(t) \nonumber\\
 &=\int_{0}^{1}(\mathcal{I}!_{t}A^{\frac{-1}{2}}BA^{\frac{-1}{2}})\ \ d\nu_{\lambda}(t)\nonumber\\
 &=(A^{\frac{-1}{2}}BA^{\frac{-1}{2}})^{\lambda}\hspace{5cm} (\text{by\; Theorem \ref{thm_main_1}}).\nonumber
\end{align*}
This implies
\[ A\sharp_{\lambda} B=A^{\frac{1}{2}}(A^{\frac{-1}{2}}BA^{\frac{-1}{2}})^{\lambda}A^{\frac{1}{2}},\] as desired.
\end{proof}

Having shown Theorem \ref{thm}, we can deal with the definition of the geometric mean in a similar manner, whether our matrices are positive or accretive. This allows us to obtain many other properties for the geometric mean for accretive matrices, which are similar to those for positive ones. For example, the following applies.

\begin{corollary} Let $ A,B\in\mathcal{M}_n $ be accretive matrices. Then for $\lambda\in (0,1),$ 
\[(A\sharp_{\lambda} B)^{-1}=A^{-1}\sharp_{\lambda} B^{-1}.\]
\end{corollary}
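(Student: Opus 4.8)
The plan is to reduce the corollary to Theorem~\ref{thm} by exploiting the two standard facts already available: that the class of accretive matrices is invariant under inversion (stated in the introduction), and that the weighted geometric mean of accretive matrices admits the explicit representation $A\sharp_{\lambda}B=A^{\frac12}\bigl(A^{-\frac12}BA^{-\frac12}\bigr)^{\lambda}A^{\frac12}$, where the fractional power is the Dunford-integral power. Since $A,B$ are accretive, so are $A^{-1},B^{-1}$, and hence $A^{-1}\sharp_{\lambda}B^{-1}$ is defined and, by Theorem~\ref{thm}, equals $A^{-\frac12}\bigl(A^{\frac12}B^{-1}A^{\frac12}\bigr)^{\lambda}A^{-\frac12}$.

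First I would set $X=A^{-\frac12}BA^{-\frac12}$; as shown in the proof of Theorem~\ref{thm}, the eigenvalues of $X$ avoid $(-\infty,0]$, and the same holds for $X^{-1}=A^{\frac12}B^{-1}A^{\frac12}$. Then I would invoke the identity $(X^{\lambda})^{-1}=(X^{-1})^{\lambda}$ for $0<\lambda<1$ for matrices with spectrum off the negative real axis. This is a routine consequence of the holomorphic functional calculus: the functions $z\mapsto z^{\lambda}$ and $z\mapsto z^{-\lambda}$ are holomorphic on $\mathbb{C}\setminus(-\infty,0]$, they are reciprocal there, and $z\mapsto z^{-\lambda}$ is the composition of $z\mapsto z^{-1}$ (which maps the relevant spectrum back into $\mathbb{C}\setminus(-\infty,0]$) with $z\mapsto z^{\lambda}$, so the composition rule and the multiplicativity of the functional calculus give $(X^{-1})^{\lambda}=(X^{\lambda})^{-1}$.

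With that in hand the computation is purely formal:
\begin{align*}
(A\sharp_{\lambda}B)^{-1}&=\Bigl(A^{\frac12}X^{\lambda}A^{\frac12}\Bigr)^{-1}
=A^{-\frac12}(X^{\lambda})^{-1}A^{-\frac12}
=A^{-\frac12}(X^{-1})^{\lambda}A^{-\frac12}\\
&=A^{-\frac12}\Bigl(A^{\frac12}B^{-1}A^{\frac12}\Bigr)^{\lambda}A^{-\frac12}
=A^{-1}\sharp_{\lambda}B^{-1},
\end{align*}
the last equality being Theorem~\ref{thm} applied to the accretive pair $(A^{-1},B^{-1})$, since $(A^{-1})^{-\frac12}B^{-1}(A^{-1})^{-\frac12}=A^{\frac12}B^{-1}A^{\frac12}=X^{-1}$.

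The only genuine point requiring care — the "hard part" such as it is — is justifying $(X^{\lambda})^{-1}=(X^{-1})^{\lambda}$ for non-Hermitian $X$ whose spectrum merely avoids $(-\infty,0]$; for positive $X$ it is trivial, but here one must appeal to the holomorphic functional calculus and verify that the branch of $z^{\lambda}$ used in \eqref{def_frac_power_intro} is consistent under inversion, i.e. that no branch-cut crossing occurs because $z\mapsto z^{-1}$ preserves $\mathbb{C}\setminus(-\infty,0]$. Everything else is bookkeeping with Theorem~\ref{thm} and the inversion-invariance of the accretive cone.
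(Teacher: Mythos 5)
Your proof is correct and follows the same route as the paper, which simply states the corollary is an immediate consequence of Theorem~\ref{thm}; you have supplied exactly the details that "immediate" suppresses, including the one genuinely non-trivial point, namely that $(X^{\lambda})^{-1}=(X^{-1})^{\lambda}$ for a non-Hermitian $X$ with spectrum off $(-\infty,0]$, which holds for the principal branch since $z\mapsto z^{-1}$ preserves $\mathbb{C}\setminus(-\infty,0]$ and $z^{\lambda}\cdot(1/z)^{\lambda}=1$ there.
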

\begin{proof}
 This is an immediate consequence of Theorem \ref{thm}.
\end{proof}

Referring to the literature dealing with geometric mean of positive matrices, we find a considerable attention to $\sharp_{\lambda}$ when $\lambda\not\in [0,1].$ In the next definition, we present the corresponding definition for accretive matrices.
\begin{definition} Let $ A,B\in\mathcal{M}_n$ be accretive matrices and let $\lambda\in\mathbb{R}$. We define
\begin{align*}
    A\sharp_{\lambda}B=A^{\frac{1}{2}} \left( A^{\frac{-1}{2}}BA^{\frac{-1}{2}}\right)^{\lambda}A^{\frac{1}{2}}.
\end{align*}
\end{definition}
In particular, we have:
\begin{proposition} If $ A,B\in\mathcal{M}_n$ are accretive matrices and $0<\lambda<1$, then 
\begin{align}
A\sharp_{-\lambda}B=A\left\{\dfrac{\sin(\lambda\pi)}{\pi}\int_{0}^{1}\dfrac{t^{\lambda-1}}{(1-t)^{\lambda}}A^{-1}!_{t}B^{-1}\ \ dt\right\}A.\label{-t}
\end{align}
In particular, \eqref{-t} holds when $A,B\in\mathcal{M}_n^+$. 
\end{proposition}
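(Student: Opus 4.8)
The plan is to reduce the statement to the definition $A\sharp_{-\lambda}B=A^{1/2}\left(A^{-1/2}BA^{-1/2}\right)^{-\lambda}A^{1/2}$ together with the representation of $(\cdot)^{\lambda}$ as a harmonic-mean integral already established in Theorem \ref{thm} and Theorem \ref{thm_main_1}. First I would observe that since $A,B$ are accretive, so are $A^{-1},B^{-1}$ (the class of accretive matrices is invariant under inversion), so $A^{-1}\sharp_\lambda B^{-1}$ is defined and, by Theorem \ref{thm}, equals $A^{-1/2}\left(A^{1/2}B^{-1}A^{1/2}\right)^{\lambda}A^{-1/2}$; equivalently, by the consistency of \eqref{raissouli_def} with the harmonic-mean integral recalled just before Theorem \ref{thm}, $A^{-1}\sharp_\lambda B^{-1}=\int_0^1 A^{-1}!_tB^{-1}\,d\nu_\lambda(t)$ with $d\nu_\lambda(t)=\frac{\sin(\lambda\pi)}{\pi}\frac{t^{\lambda-1}}{(1-t)^\lambda}dt$.

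Next I would use the Corollary following Theorem \ref{thm}, namely $(A\sharp_\lambda B)^{-1}=A^{-1}\sharp_\lambda B^{-1}$, but applied in the reverse direction: from the definition, $A\sharp_{-\lambda}B=A^{1/2}\left(A^{-1/2}BA^{-1/2}\right)^{-\lambda}A^{1/2}=A^{1/2}\left(\left(A^{-1/2}BA^{-1/2}\right)^{\lambda}\right)^{-1}A^{1/2}=A^{1/2}\left(\left(A^{-1/2}BA^{-1/2}\right)^{-1}\right)^{\lambda}A^{1/2}=A^{1/2}\left(A^{1/2}B^{-1}A^{1/2}\right)^{\lambda}A^{1/2}$. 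Here one must note that $A^{-1/2}BA^{-1/2}$ has no eigenvalues in $(-\infty,0]$ (shown in the proof of Theorem \ref{thm}), so its fractional powers behave like genuine analytic functional calculus and $(X^{\lambda})^{-1}=(X^{-1})^{\lambda}$ holds — this is the one point where I would be careful, invoking the Dunford-integral definition \eqref{def_frac_power_intro} and the fact that $z\mapsto z^{\lambda}$ and $z\mapsto z^{-\lambda}$ are analytic on $\mathbb{C}\setminus(-\infty,0]$ and reciprocal there.

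Then I would apply Theorem \ref{thm_main_1} to the matrix $X:=A^{1/2}B^{-1}A^{1/2}$, whose eigenvalues avoid $(-\infty,0]$ (they coincide with those of $A^{-1}B^{-1}A^{-1}\cdot$, or more directly with those of $BA^{-1}$ up to the accretivity argument), to get $X^{\lambda}=\int_0^1 \mathcal{I}!_tX\,d\nu_\lambda(t)$. Conjugating by $A^{1/2}$ and using the congruence-invariance of the harmonic mean, $A^{1/2}(\mathcal{I}!_tX)A^{1/2}=A!_t\left(A^{1/2}XA^{1/2}\right)^{\!}=A!_t\left(A\cdot\! B^{-1}\!\cdot\! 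A\right)$; wait — more cleanly, $A^{1/2}\big(\mathcal{I}!_t(A^{1/2}B^{-1}A^{1/2})\big)A^{1/2}=A!_t(AB^{-1}A)$, and since $AB^{-1}A=A(A^{-1}!_tB^{-1})A\big|_{\ldots}$ does not simplify that way, I would instead pull the outer $A$'s inside: $A^{1/2}\big((1-t)\mathcal{I}+tA^{1/2}B^{-1}A^{1/2}\big)^{-1}A^{1/2}=\big((1-t)A^{-1}+tB^{-1}\big)^{-1}=A^{-1}!_tB^{-1}$. Hence $A\sharp_{-\lambda}B=A^{1/2}X^{\lambda}A^{1/2}=\int_0^1 A\big(A^{-1}!_tB^{-1}\big)A\,d\nu_\lambda(t)=A\left\{\int_0^1 (A^{-1}!_tB^{-1})\,d\nu_\lambda(t)\right\}A$, which is exactly \eqref{-t}.

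Finally, for the "in particular" clause, when $A,B\in\mathcal{M}_n^+$ all manipulations above are classical and the formula reduces to the usual identity $A\sharp_{-\lambda}B=A(A^{-1}\sharp_\lambda B^{-1})A$; alternatively it follows at once since $\mathcal{M}_n^+\subset$ accretive matrices. The main obstacle I anticipate is purely bookkeeping: justifying the identity $(X^{\lambda})^{-1}=(X^{-1})^{\lambda}$ for the non-Hermitian matrix $X=A^{-1/2}BA^{-1/2}$ via the Dunford calculus, and keeping the conjugations by $A^{1/2}$ straight so that the harmonic-mean integrand lands on $A^{-1}!_tB^{-1}$ rather than on some other rearrangement; there is no deep difficulty, only the need to cite \eqref{def_frac_power_intro}, Theorem \ref{thm}, Theorem \ref{thm_main_1}, and the congruence-covariance of $!_t$ in the right order.
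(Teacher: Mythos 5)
Your overall route is exactly the paper's: start from $A\sharp_{-\lambda}B=A^{1/2}\left(A^{-1/2}BA^{-1/2}\right)^{-\lambda}A^{1/2}$, pass to $\left(A^{1/2}B^{-1}A^{1/2}\right)^{\lambda}$ via the functional-calculus identity $(X^{-1})^{\lambda}=X^{-\lambda}$, expand that power by the harmonic-mean integral of Theorem \ref{thm_main_1} with $d\nu_\lambda(t)=\frac{\sin(\lambda\pi)}{\pi}\frac{t^{\lambda-1}}{(1-t)^{\lambda}}\,dt$, and conjugate by $A^{1/2}$. The conclusion you reach is the correct one, and the points you flag (eigenvalues of $A^{-1/2}BA^{-1/2}$ avoiding $(-\infty,0]$, the Dunford-calculus justification of $(X^{\lambda})^{-1}=(X^{-1})^{\lambda}$) are exactly the right ones to worry about.

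However, the displayed computation in your third paragraph contains two compensating slips, so the individual equalities you write are false even though their composition is true. With the paper's convention $X!_tY=\left((1-t)X^{-1}+tY^{-1}\right)^{-1}$, the integrand of Theorem \ref{thm_main_1} applied to $X=A^{1/2}B^{-1}A^{1/2}$ is $\mathcal{I}!_tX=\left((1-t)\mathcal{I}+tX^{-1}\right)^{-1}=\left((1-t)\mathcal{I}+tA^{-1/2}BA^{-1/2}\right)^{-1}$, not $\left((1-t)\mathcal{I}+tA^{1/2}B^{-1}A^{1/2}\right)^{-1}$ as you wrote; and the quantity $\left((1-t)A^{-1}+tB^{-1}\right)^{-1}$ you obtain after conjugation is $A!_tB$, not $A^{-1}!_tB^{-1}$ (the latter equals $\left((1-t)A+tB\right)^{-1}$). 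Conjugating the correct integrand gives $A^{1/2}(\mathcal{I}!_tX)A^{1/2}=\left((1-t)A^{-1}+tA^{-1}BA^{-1}\right)^{-1}=A\left(A^{-1}!_tB^{-1}\right)A$, which is what the statement requires and is precisely the paper's final step. Incidentally, the congruence computation you started and then abandoned was fine: $A^{1/2}(\mathcal{I}!_tX)A^{1/2}=A!_t(AB^{-1}A)$, and a second congruence by $A$ gives $A\left(A^{-1}!_tB^{-1}\right)A=(AA^{-1}A)!_t(AB^{-1}A)=A!_t(AB^{-1}A)$, so that route closes the argument as well.
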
 
\begin{proof}
 For $ 0<\lambda <1$, we have
\begin{align*}
\displaystyle
A\sharp_{-\lambda}B &=A^{\frac{1}{2}} \left( A^{\frac{-1}{2}}BA^{\frac{-1}{2}}\right)^{-\lambda} A^{\frac{1}{2}} \\
&=A^{\frac{1}{2}}\left[ \dfrac{\sin(\lambda \pi)}{\pi}\int_{0}^{1}\dfrac{t^{\lambda-1}}{(1-t)^{\lambda}}\ \ \mathcal{I}!_{t}\left( A^{\frac{-1}{2}}BA^{\frac{-1}{2}}\right)^{-1} \ \ dt\right]A^{\frac{1}{2}}\\
&=A^{\frac{1}{2}}\left[ \dfrac{\sin(\lambda \pi)}{\pi}\int_{0}^{1}\dfrac{t^{\lambda-1}}{(1-t)^{\lambda}}\ \ \mathcal{I}!_{t}(A^{\frac{1}{2}}B^{-1}A^{\frac{1}{2}})\ \ dt\right]A^{\frac{1}{2}}\\
&=A\left[ \dfrac{\sin(\lambda \pi)}{\pi}\int_{0}^{1}\dfrac{t^{\lambda-1}}{(1-t)^{\lambda}}A^{-1}!_{t}B^{-1}\ \ dt\right] A.
\end{align*}
\end{proof} 

For the rest of this section, we will present several inequalities for the geometric mean of accretive matrices. These inequalities simulate similar results for positive ones. 

The following three propositions present the accretive version of Proposition \ref{gumus}.
\begin{proposition}\label{sbbh} Let $ A, B \in\mathcal{M}_{n}$ be accretive matrices such that $ 0<m\mathcal{I}\leq \Re A, \Re B\leq M\mathcal{I} $, for some scalars $ 0<m<M $. If $0<t<1$ and $ \lambda=\min\lbrace t, 1-t\rbrace$, then
\begin{equation}
\Re(A\nabla_{t} B)\leq\dfrac{m\nabla_{\lambda}M}{m\sharp_{\lambda}M}\Re(A\sharp_{t}B).
\end{equation}
\end{proposition}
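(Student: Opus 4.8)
The plan is to reduce the accretive statement to the scalar inequality \eqref{gumus_1} of Proposition \ref{gumus} by passing through the harmonic-mean integral representation of the means involved, exactly as was done for Theorem \ref{thm}. The key point is that $A\sharp_t B$ is given (by the discussion preceding Theorem \ref{thm}) as $\int_0^1 A!_s B\,d\nu_t(s)$ with $\nu_t$ a probability measure on $[0,1]$, and $A\nabla_t B$ is itself a convex combination which can trivially be written as $\int_0^1 A!_0 B\,d\mu(s)$-type expressions; more usefully, I would exploit that $\Re$ is linear and commutes with integration against a positive measure, so that $\Re(A\sharp_t B)=\int_0^1\Re(A!_s B)\,d\nu_t(s)$ and similarly $\Re(A\nabla_t B)=(1-t)\Re A+t\Re B=(\Re A)\nabla_t(\Re B)$.

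The first step is therefore to apply Lemma \ref{2}: since $A,B$ are accretive, $\Re(A!_s B)\geq (\Re A)!_s(\Re B)$ for every $s\in(0,1)$. Integrating this against the probability measure $d\nu_t(s)$ and using the integral formula for $\sharp_t$ on positive matrices (equation \eqref{mean_har_prob_intro}, valid since $\Re A,\Re B\in\mathcal{M}_n^+$), one gets
\begin{align*}
\Re(A\sharp_t B)=\int_0^1\Re(A!_s B)\,d\nu_t(s)\geq\int_0^1(\Re A)!_s(\Re B)\,d\nu_t(s)=(\Re A)\sharp_t(\Re B).
\end{align*}
This is just Lemma \ref{lemma_sharp>}, so in fact I would simply cite Lemma \ref{lemma_sharp>} directly rather than re-deriving it. The second step is to apply Proposition \ref{gumus}, inequality \eqref{gumus_1}, to the genuinely positive definite matrices $\Re A$ and $\Re B$, which satisfy $m\mathcal{I}\leq\Re A,\Re B\leq M\mathcal{I}$ by hypothesis: with $\lambda=\min\{t,1-t\}$,
\begin{align*}
(\Re A)\nabla_t(\Re B)\leq\frac{m\nabla_\lambda M}{m\sharp_\lambda M}\,(\Re A)\sharp_t(\Re B).
\end{align*}

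The third and final step chains these together. Since the scalar constant $c:=\frac{m\nabla_\lambda M}{m\sharp_\lambda M}\geq 1$, multiplying the inequality of step one by $c$ preserves it, and then
\begin{align*}
\Re(A\nabla_t B)=(\Re A)\nabla_t(\Re B)\leq c\,(\Re A)\sharp_t(\Re B)\leq c\,\Re(A\sharp_t B),
\end{align*}
which is the claim. The main obstacle — or rather the only point that needs care — is justifying $\Re(A\nabla_t B)=(\Re A)\nabla_t(\Re B)$, which is immediate from linearity of $\Re$, and confirming that $\Re A,\Re B$ fall in the range of applicability of Proposition \ref{gumus}, which is precisely the hypothesis $0<m\mathcal{I}\leq\Re A,\Re B\leq M\mathcal{I}$. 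Everything else is a direct concatenation of Lemma \ref{lemma_sharp>} (the "$\Re$ of a geometric mean dominates the geometric mean of the $\Re$'s") with the positive-matrix inequality \eqref{gumus_1}; no new estimate is required. I expect the same template to yield Propositions \ref{sbbh1} and \ref{sbbh2} from \eqref{gumus_2} and \eqref{gumus_3}, using Lemma \ref{2} and Lemma \ref{lemma_sharp>} in place of the corresponding positive-case facts.
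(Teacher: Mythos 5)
Your proposal is correct and follows essentially the same route as the paper: write $\Re(A\nabla_t B)=(\Re A)\nabla_t(\Re B)$, apply inequality \eqref{gumus_1} to the positive matrices $\Re A,\Re B$, and finish with Lemma \ref{lemma_sharp>}. The only difference is your optional re-derivation of Lemma \ref{lemma_sharp>} from Lemma \ref{2}, which you rightly note can be replaced by a direct citation.
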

\begin{proof} 
Let $ \lambda=\min\lbrace t, 1-t\rbrace$ for $t\in(0, 1). $ Then
\begin{eqnarray}
\Re(A\nabla_{t} B)&=&\Re A\nabla_{t}\Re B\leq\dfrac{m\nabla_{\lambda}M}{m\sharp_{\lambda}M}(\Re A\sharp_{t}\Re B)\leq \dfrac{m\nabla_{\lambda}M}{m\sharp_{\lambda}M}\Re(A\sharp_{t}B),\nonumber
\end{eqnarray}
where we have used \eqref{gumus_1} and Lemma \ref{lemma_sharp>} to obtain the first and second inequalities, respectively.
\end{proof}
\begin{proposition}\label{sbbh1} Let $ A, B \in\mathcal{M}_{n}$ be accretive matrices such that $ 0<m\mathcal{I}\leq \Re A, \Re B\leq M\mathcal{I} $, for some scalars $ 0<m<M $, and $ W(A), W(B)\subset S_{\alpha},$ for some $0\leq\alpha<\frac{\pi}{2}$. If $0<t<1$ and $ \lambda=\min\lbrace t, 1-t\rbrace$, then
\begin{equation}
\Re(A\sharp_{t} B)\leq\sec^{2}(\alpha)\dfrac{m\nabla_{\lambda}M}{m\sharp_{\lambda}M}\Re(A!_{t}B).
\end{equation}

\end{proposition}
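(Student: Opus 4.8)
The plan is to reproduce the strategy of the proof of Proposition \ref{sbbh}, but replacing the plain accretive estimates by their sectorial counterparts. Concretely, I would chain together three inequalities already recorded in Section 2: the sectorial upper bound for $\Re(A\sharp_t B)$ from Lemma \ref{lemma_sharpsec}, the positive-matrix mean inequality \eqref{gumus_2} from Proposition \ref{gumus} applied to the real parts, and the accretive lower bound for the harmonic mean from Lemma \ref{2}.

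In more detail, the argument would run as follows. First, since $W(A),W(B)\subset S_\alpha$, Lemma \ref{lemma_sharpsec} gives
$$\Re(A\sharp_t B)\leq \sec^2(\alpha)\,(\Re A\sharp_t\Re B).$$
Next, because $A,B$ are accretive with $0<m\mathcal{I}\leq \Re A,\Re B\leq M\mathcal{I}$, the Hermitian positive definite matrices $\Re A$ and $\Re B$ satisfy the hypotheses of Proposition \ref{gumus}, so \eqref{gumus_2} (with $\lambda=\min\{t,1-t\}$) yields
$$\Re A\sharp_t\Re B\leq \frac{m\nabla_\lambda M}{m\sharp_\lambda M}\,(\Re A\,!_t\,\Re B).$$
Finally, Lemma \ref{2}, applied to the accretive matrices $A,B$, gives $\Re A\,!_t\,\Re B\leq \Re(A\,!_t\,B)$. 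Stringing these three bounds together, and using that the scalar factors $\sec^2(\alpha)$ and $\frac{m\nabla_\lambda M}{m\sharp_\lambda M}$ are both nonnegative (indeed $\geq 1$), so multiplication by them preserves the Löwner order, produces
$$\Re(A\sharp_t B)\leq \sec^2(\alpha)\,\frac{m\nabla_\lambda M}{m\sharp_\lambda M}\,\Re(A\,!_t\,B),$$
which is the claim.

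There is no genuine analytic obstacle here; the only points requiring care are bookkeeping ones. One must verify that each invoked lemma applies: $\Re A,\Re B$ have spectra in $[m,M]$ for \eqref{gumus_2}; $A,B$ are accretive for Lemma \ref{2}; and $W(A),W(B)\subset S_\alpha$ for Lemma \ref{lemma_sharpsec}. One should also confirm that $\Re(A\nabla_t B)=\Re A\nabla_t\Re B$ is not needed here (unlike in Proposition \ref{sbbh}), and that the intermediate quantities $\Re A\sharp_t\Re B$ and $\Re A\,!_t\,\Re B$ are genuine positive definite matrices so that the chain of Löwner inequalities is meaningful. Once these routine checks are in place, the proof is a two-line computation analogous to that of Proposition \ref{sbbh}.
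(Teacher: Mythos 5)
Your proof is correct and follows exactly the paper's own argument: the chain Lemma \ref{lemma_sharpsec}, then \eqref{gumus_2} applied to $\Re A,\Re B$, then Lemma \ref{2} is precisely what the authors use. The extra bookkeeping remarks are fine but not needed beyond what the paper records.
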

\begin{proof} 
Let $ \lambda=\min\lbrace t, 1-t\rbrace$ for $0<t<1.$ Then
\begin{eqnarray}
\Re(A\sharp_{t}B)&\leq &\sec^{2}(\alpha)(\Re A\sharp_{t}\Re B)
\leq \sec^{2}(\alpha)\dfrac{m\nabla_{\lambda}M}{m\sharp_{\lambda}M}(\Re A!_{t}\Re B)
\leq \sec^{2}(\alpha)\dfrac{m\nabla_{\lambda}M}{m\sharp_{\lambda}M}\Re(A!_{t}B),\nonumber
\end{eqnarray}
where we have used Lemma \ref{lemma_sharpsec}, \eqref{gumus_2} and Lemma \ref{2} to obtain the first, second and third inequalities, respectively.
\end{proof}
\begin{proposition}\label{sbbh2}  Let $ A, B \in\mathcal{M}_{n}$ be accretive matrices such that $ 0<m\mathcal{I}\leq \Re A, \Re B\leq M\mathcal{I} $, for some scalars $ 0<m<M $, and $ W(A), W(B)\subset S_{\alpha},$ for some $0\leq \alpha<\frac{\pi}{2}$.  If $0<t<1$ and  $ \lambda=\min\lbrace t, 1-t\rbrace$, then
\begin{equation}
\Re(A\nabla_{t}B)-M\left(\dfrac{m\nabla_{\lambda}M}{m\sharp_{\lambda}M}-1\right)\mathcal{I} \leq \Re(A\sharp_{t}B)\leq \sec^{2}(\alpha)\left( M\left(\dfrac{m\nabla_{\lambda}M}{m\sharp_{\lambda}M}-1\right)\mathcal{I}+\Re(A!_{t}B)\right).
\end{equation}
\end{proposition}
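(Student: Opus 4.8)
The plan is to follow exactly the template of the proofs of Propositions \ref{sbbh} and \ref{sbbh1}: reduce both halves of the claimed double inequality to the positive-definite statement \eqref{gumus_3} applied to the matrices $\Re A$ and $\Re B$ (which satisfy $0<m\mathcal{I}\le \Re A,\Re B\le M\mathcal{I}$ by hypothesis), and then move the real parts back inside the means using the one-sided comparison Lemmas \ref{lemma_sharp>}, \ref{lemma_sharpsec} and \ref{2}. Throughout, write $c=\frac{m\nabla_{\lambda}M}{m\sharp_{\lambda}M}$ for the scalar constant appearing in the statement.

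For the lower bound, I would first use the elementary identity $\Re(A\nabla_{t}B)=(\Re A)\nabla_{t}(\Re B)$, valid because $\nabla_{t}$ is an affine combination and $\Re$ is real-linear. The left half of \eqref{gumus_3} applied to $\Re A,\Re B$ then gives
$$(\Re A)\nabla_{t}(\Re B)-M(c-1)\mathcal{I}\le (\Re A)\sharp_{t}(\Re B),$$
and Lemma \ref{lemma_sharp>} yields $(\Re A)\sharp_{t}(\Re B)\le \Re(A\sharp_{t}B)$. Chaining these three relations produces $\Re(A\nabla_{t}B)-M(c-1)\mathcal{I}\le \Re(A\sharp_{t}B)$, which is the left-hand inequality.

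For the upper bound, I would start from Lemma \ref{lemma_sharpsec}, which uses the sectorial hypothesis $W(A),W(B)\subset S_{\alpha}$ to give $\Re(A\sharp_{t}B)\le \sec^{2}(\alpha)\,(\Re A)\sharp_{t}(\Re B)$. Applying the right half of \eqref{gumus_3} to $\Re A,\Re B$ gives $(\Re A)\sharp_{t}(\Re B)\le M(c-1)\mathcal{I}+(\Re A)!_{t}(\Re B)$, and Lemma \ref{2} replaces $(\Re A)!_{t}(\Re B)$ by the larger quantity $\Re(A!_{t}B)$, so that $(\Re A)\sharp_{t}(\Re B)\le M(c-1)\mathcal{I}+\Re(A!_{t}B)$. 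Since $\sec^{2}(\alpha)>0$, multiplying this by $\sec^{2}(\alpha)$ preserves the order, and combining with the first estimate gives $\Re(A\sharp_{t}B)\le \sec^{2}(\alpha)\bigl(M(c-1)\mathcal{I}+\Re(A!_{t}B)\bigr)$, as claimed.

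There is no serious obstacle here; the proof is essentially bookkeeping. The one point requiring care is keeping track of the directions of the one-sided real-part inequalities: $\Re$ commutes with $\nabla_{t}$ exactly, but for $\sharp_{t}$ and $!_{t}$ one has only $\Re(A\sharp_{t}B)\ge(\Re A)\sharp_{t}(\Re B)$ and $\Re(A!_{t}B)\ge(\Re A)!_{t}(\Re B)$ in general, while the reverse direction $\Re(A\sharp_{t}B)\le \sec^{2}(\alpha)(\Re A)\sharp_{t}(\Re B)$ is available only through the sectorial hypothesis. It is also worth noting that $c=\frac{m\nabla_{\lambda}M}{m\sharp_{\lambda}M}\ge 1$, since $\sharp_{\lambda}\le\nabla_{\lambda}$ for positive scalars, so the shift constant $M(c-1)$ is nonnegative; this is not strictly needed for the chaining, but it makes the two bounds mirror the positive-matrix statement \eqref{gumus_3} cleanly.
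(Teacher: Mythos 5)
Your proposal is correct and follows essentially the same route as the paper: both halves are reduced to \eqref{gumus_3} applied to $\Re A,\Re B$, with the lower bound chained through Lemma \ref{lemma_sharp>} and the upper bound through Lemma \ref{2} and the sectorial reverse Lemma \ref{lemma_sharpsec}. The only difference is cosmetic (the paper writes the upper-bound chain in the reverse order, ending at $\cos^{2}(\alpha)\Re(A\sharp_{t}B)$), so no further comment is needed.
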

\begin{proof} 
 Following the same reasoning as in the proof of the above propositions, we have
\begin{eqnarray}
\Re(A\nabla_{t}B)-M\left(\dfrac{m\nabla_{\lambda}M}{m\sharp_{\lambda}M}-1\right)\mathcal{I} &=&\Re A\nabla_{t}\Re B-M\left(\dfrac{m\nabla_{\lambda}M}{m\sharp_{\lambda}M}-1\right)\mathcal{I}\nonumber\\
&\leq &\Re A\sharp_{t}\Re B\leq \Re (A\sharp_{t}B).\nonumber
\end{eqnarray}
This proves the first inequality. For the second inequality, notice that
\begin{eqnarray}
M\left(\dfrac{m\nabla_{\lambda}M}{m\sharp_{\lambda}M}-1\right)\mathcal{I}+\Re(A!_{t}B)&\geq & M\left(\dfrac{m\nabla_{\lambda}M}{m\sharp_{\lambda}M}-1\right)\mathcal{I}+\Re A!_{t}\Re B\nonumber\\
&\geq &\Re A\sharp_{t}\Re B\geq  \cos^{2}(\alpha)\Re (A\sharp_{t}B),\nonumber
\end{eqnarray}
This completes the proof.
\end{proof}
The following two theorems present the accretive versions of relations \eqref{sharpando} and \eqref{ts} respectively.
\begin{theorem}\label{theorem_mixed_sharp_nabla_har} Let $ A, B\in\mathcal{M}_n $ be accretive matrices such that  $ W(A), W(B)\subset S_{\alpha},$ for some $0\leq \alpha<\frac{\pi}{2}$. Then 
\begin{equation}
\cos^{3}(\alpha)\Re[(A\nabla B)\sharp(A!B)]\leq \Re(A\sharp B)\leq \sec^{2}(\alpha)\Re [(A\nabla B)\sharp (A!B)]. 
\end{equation}
\end{theorem}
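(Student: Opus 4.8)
The plan is to reduce the matrix statement to the known scalar/positive-matrix identity \eqref{sharpando} together with the sectorial sandwich lemmas already collected in Section 2. The key classical fact is that for positive definite matrices one has $(A\nabla B)\sharp(A!B)=A\sharp B$; applied to the real parts $\Re A,\Re B\in\mathcal{M}_n^+$ this gives
\[
(\Re A\,\nabla\,\Re B)\sharp(\Re A\,!\,\Re B)=\Re A\,\sharp\,\Re B .
\]
So the whole argument is about passing $\Re(\cdot)$ through the three means $\nabla$, $!$, $\sharp$ with the sectorial constants $\cos^2(\alpha)$ and $\sec^2(\alpha)$, and then combining one more outer $\sharp$.

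\textbf{Upper bound.} First I would apply Lemma \ref{lemma_sharpsec} (with $t=\tfrac12$) to get $\Re(A\sharp B)\leq\sec^2(\alpha)(\Re A\,\sharp\,\Re B)=\sec^2(\alpha)\big[(\Re A\nabla\Re B)\sharp(\Re A!\,\Re B)\big]$ by the scalar identity above. Next, since $\Re A\nabla\Re B=\Re(A\nabla B)$ exactly (the arithmetic mean commutes with $\Re$), and since by Lemma \ref{2} one has $\Re(A!B)\geq\Re A\,!\,\Re B$, monotonicity of $\sharp$ in both arguments (Kubo--Ando axiom, valid for positive matrices) yields
\[
(\Re A\nabla\Re B)\sharp(\Re A!\,\Re B)\ \leq\ \Re(A\nabla B)\sharp\Re(A!B).
\]
Here I must check that $\Re(A\nabla B)$ and $\Re(A!B)$ are positive definite so that $\sharp$ makes sense and is monotone — this is immediate because $A\nabla B$ and $A!B$ are accretive (the accretive cone is closed under $\nabla$, and under $!$ by invariance under inversion and convexity). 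Combining the two displays gives the right-hand inequality. For this last monotonicity step one may alternatively note that $\Re(A\nabla B)\sharp\Re(A!B)$ is the geometric mean $(\Re(A\nabla B))^{1/2}\big((\Re(A\nabla B))^{-1/2}\Re(A!B)(\Re(A\nabla B))^{-1/2}\big)^{1/2}(\Re(A\nabla B))^{1/2}$, and congruence invariance plus operator monotonicity of $t\mapsto t^{1/2}$ handles the comparison.

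\textbf{Lower bound.} Symmetrically, Lemma \ref{lemma_sharp>} (with $t=\tfrac12$) gives $\Re(A\sharp B)\geq\Re A\,\sharp\,\Re B=(\Re A\nabla\Re B)\sharp(\Re A!\,\Re B)$. Now I want to replace $\Re A!\,\Re B$ by $\Re(A!B)$ \emph{from above}, which is exactly the content of Lemma \ref{lemma_reverse_real_!t} at $t=\tfrac12$: $\Re(A!B)\leq\sec^2(\alpha)\,\Re A!\,\Re B$, i.e. $\Re A!\,\Re B\geq\cos^2(\alpha)\,\Re(A!B)$. Together with $\Re A\nabla\Re B=\Re(A\nabla B)$, and using congruence homogeneity of $\sharp$ in the second slot ($X\sharp(cY)=\sqrt{c}\,(X\sharp Y)$ for a scalar $c>0$) plus monotonicity, I obtain
\[
(\Re A\nabla\Re B)\sharp(\Re A!\,\Re B)\ \geq\ \Re(A\nabla B)\sharp\big(\cos^2(\alpha)\Re(A!B)\big)=\cos(\alpha)\,\Re(A\nabla B)\sharp\Re(A!B),
\]
which together with the previous inequality only gives the constant $\cos(\alpha)$. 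To reach $\cos^3(\alpha)$ I would instead also push $\sec^2(\alpha)$ out of the \emph{outer} geometric mean: apply Lemma \ref{lemma_sharpsec} once more, now to the accretive matrices $A\nabla B$ and $A!B$ (both sectorial in $S_\alpha$, since $S_\alpha$ is convex and closed under inversion), to get $\Re[(A\nabla B)\sharp(A!B)]\leq\sec^2(\alpha)\,\Re(A\nabla B)\sharp\Re(A!B)$, hence $\Re(A\nabla B)\sharp\Re(A!B)\geq\cos^2(\alpha)\,\Re[(A\nabla B)\sharp(A!B)]$. Chaining: $\Re(A\sharp B)\geq\cos(\alpha)\cdot\cos^2(\alpha)\,\Re[(A\nabla B)\sharp(A!B)]=\cos^3(\alpha)\,\Re[(A\nabla B)\sharp(A!B)]$, as claimed.

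\textbf{Main obstacle.} The routine parts are the algebraic identities $\Re(A\nabla B)=\Re A\nabla\Re B$ and the positive-matrix identity \eqref{sharpando}; the delicate bookkeeping is tracking exactly which sectorial lemma (Lemma \ref{2}, Lemma \ref{lemma_reverse_real_!t}, Lemma \ref{lemma_sharp>}, Lemma \ref{lemma_sharpsec}) is applied to which pair of matrices and in which direction, so that the constants $\cos^2(\alpha)$ accumulate to exactly $\cos^3(\alpha)$ on the left and only $\sec^2(\alpha)$ survives on the right. I would also double-check that all intermediate matrices to which a $\sharp$ is applied are genuinely positive definite (so Kubo--Ando monotonicity and congruence homogeneity apply), which follows from the accretive cone being closed under $\nabla$, $!$ and $\sharp$ and from $\Re$ of an accretive matrix being positive definite. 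No new inequality beyond the listed lemmas should be needed.
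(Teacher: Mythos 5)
Your lower-bound argument is exactly the paper's: Lemma \ref{lemma_sharp>} applied to $A,B$, then the identity \eqref{sharpando} for the positive matrices $\Re A,\Re B$, then Lemma \ref{lemma_reverse_real_!t} together with the homogeneity $X\sharp(cY)=\sqrt{c}\,(X\sharp Y)$ to produce the factor $\cos(\alpha)$, and finally Lemma \ref{lemma_sharpsec} applied to the sectorial pair $A\nabla B$, $A!B$ to produce the remaining $\cos^{2}(\alpha)$; the justification that $W(A!B)\subset S_\alpha$ is also the one the paper gives. So on that half there is nothing to add.

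The upper bound, however, has a genuine (if small) gap as written. Your two displays combine to give
\begin{equation*}
\Re(A\sharp B)\ \leq\ \sec^{2}(\alpha)\,\bigl[\Re(A\nabla B)\,\sharp\,\Re(A!B)\bigr],
\end{equation*}
whose right-hand side is the geometric mean of the real parts, \emph{not} the real part of the geometric mean $\sec^{2}(\alpha)\,\Re\bigl[(A\nabla B)\sharp(A!B)\bigr]$ demanded by the theorem; the claim that "combining the two displays gives the right-hand inequality" silently identifies these two quantities. The missing step is one further application of Lemma \ref{lemma_sharp>}, this time to the accretive pair $A\nabla B$ and $A!B$ (accretivity, not sectoriality, suffices for that lemma), which yields $\Re(A\nabla B)\,\sharp\,\Re(A!B)\leq\Re\bigl[(A\nabla B)\sharp(A!B)\bigr]$ and closes the argument with no loss of constant. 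This is precisely how the paper runs its chain for the second inequality (it starts from $\Re\bigl[(A\nabla B)\sharp(A!B)\bigr]\geq\Re(A\nabla B)\sharp\Re(A!B)$ and works down to $\cos^{2}(\alpha)\Re(A\sharp B)$); once you insert that step, your proof coincides with the paper's.
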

\begin{proof}
First, 
\begin{align*}
\Re(A\sharp B)&\geq\Re A\sharp\Re B\hspace{3.5cm}({\text{by Lemma}}\;\ref{lemma_sharp>})\\
&=(\Re A\nabla\Re B)\sharp(\Re A!\Re B)\hspace{2cm}({\text{by}}\;\eqref{sharpando})\\
&\geq\Re(A\nabla B)\sharp(\cos^{2}(\alpha)\Re(A!B))\hspace{0.4cm}({\text{by Lemma}}\;\ref{lemma_reverse_real_!t})\\
&=\cos(\alpha)\Re(A\nabla B)\sharp\Re(A!B)\\
&\geq \cos^{3}(\alpha)\Re[(A\nabla B)\sharp(A!B)]
\end{align*}
where we have used Lemma \ref{lemma_sharpsec} to obtain the last inequality. Notice that the use of Lemma \ref{lemma_sharpsec} here is justified since $S_{\alpha}$ is invariant under addition and inversion, which ensures that when $W(A),W(B)\subset S_{\alpha}$ then $W(A!B)\subset S_{\alpha};$ see \cite[Proposition 3.2]{sab-lama}. This proves the first inequality. For the second inequality,
\begin{align*}
\Re \left((A\nabla B)\sharp (A!B)\right)&\geq\Re(A\nabla B)\sharp \Re(A!B) \hspace{1.5cm}({\text{by Lemma}}\;\ref{lemma_sharp>})\\
&\geq\Re(A\nabla B)\sharp (\Re A !\Re B) \\
&=(\Re A\nabla\Re B)\sharp (\Re A !\Re B)\\
&= \Re A \sharp \Re B \  \hspace{3cm} (\text{by}\; \eqref{sharpando})\\
&\geq \cos^{2}(\alpha)\ \Re(A\sharp B),\hspace{2cm} ({\text{by Lemma}}\;\ref{lemma_sharpsec})
\end{align*}
which completes the proof.
\end{proof}

\begin{theorem}\label{theorem_mixed_nabla_sharp}  Let $ A, B \in\mathcal{M}_n $ be accretive matrices such that $ W(A), W(B)\subset S_{\alpha},$ for some $0\leq \alpha<\frac{\pi}{2}$. Then for $ t,s\in(0,1) $,
\begin{equation}
\Re(A\sharp_{s}(A\nabla_{t}B))\leq \sec^{2}(\alpha)\Re(A\nabla_{t}(A\sharp_{s}B)).
\end{equation}
\end{theorem}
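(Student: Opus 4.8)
The plan is to reduce everything to the scalar sectorial relation \eqref{ts} together with the real-part comparison lemmas already available. The starting point is the relation $\Re(A\nabla_{t}B)=\Re A\,\nabla_{t}\,\Re B$, which is immediate since $\nabla_{t}$ is affine, and the inner-/outer-bounds on $\Re(A\sharp_{s}B)$ coming from Lemma \ref{lemma_sharp>} and Lemma \ref{lemma_sharpsec}. So I would first observe that, since $S_{\alpha}$ is invariant under addition, inversion, and the formation of geometric means, all of $A\nabla_{t}B$, $A\sharp_{s}B$, $A\sharp_{s}(A\nabla_{t}B)$ and $A\nabla_{t}(A\sharp_{s}B)$ have numerical range in $S_{\alpha}$; this is exactly the point invoked in the proof of Theorem \ref{theorem_mixed_sharp_nabla_har} via \cite[Proposition 3.2]{sab-lama}, so the sectorial lemmas apply to all the relevant matrices.

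Next I would estimate the left-hand side from above: using Lemma \ref{lemma_sharpsec} applied to the pair $A$, $A\nabla_{t}B$ (both in $S_{\alpha}$),
\begin{align*}
\Re\big(A\sharp_{s}(A\nabla_{t}B)\big)\le \sec^{2}(\alpha)\,\big(\Re A\,\sharp_{s}\,\Re(A\nabla_{t}B)\big)=\sec^{2}(\alpha)\,\big(\Re A\,\sharp_{s}\,(\Re A\,\nabla_{t}\,\Re B)\big).
\end{align*}
Now $\Re A,\Re B$ are genuinely positive definite, so the scalar-matrix relation \eqref{ts} gives $\Re A\,\sharp_{s}(\Re A\,\nabla_{t}\,\Re B)\le \Re A\,\nabla_{t}(\Re A\,\sharp_{s}\,\Re B)$. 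Then apply Lemma \ref{lemma_sharp>} in the form $\Re A\,\sharp_{s}\,\Re B\le \Re(A\sharp_{s}B)$, followed by the affineness of $\nabla_{t}$, to continue the chain:
\begin{align*}
\Re A\,\nabla_{t}(\Re A\,\sharp_{s}\,\Re B)\le \Re A\,\nabla_{t}\,\Re(A\sharp_{s}B)=\Re\big(A\nabla_{t}(A\sharp_{s}B)\big).
\end{align*}
Concatenating these inequalities yields $\Re(A\sharp_{s}(A\nabla_{t}B))\le \sec^{2}(\alpha)\,\Re(A\nabla_{t}(A\sharp_{s}B))$, which is the assertion.

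The only genuinely delicate point is the justification that \eqref{ts} may be invoked: in \eqref{ts} the inequality is for positive definite matrices, and here I apply it to $\Re A$ and $\Re B$, which is legitimate precisely because those are positive definite. I would also take care that the weighted means $\nabla_{t},\sharp_{s}$ in \eqref{ts} really are the same objects as in the accretive definitions when restricted to positive matrices — this is guaranteed by Theorem \ref{thm} for $\sharp_{s}$ and is trivial for $\nabla_{t}$. A secondary bookkeeping point is monotonicity: each step replacing one positive matrix by a larger one must be applied inside an expression that is monotone in that slot, which holds because $X\mapsto X\sharp_{s}Y$, $X\mapsto Y\nabla_{t}X$ and scalar multiplication by $\sec^{2}(\alpha)\ge 1$ are all monotone. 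Beyond that, the argument is a routine assembly of the cited lemmas, so I expect no real obstacle.
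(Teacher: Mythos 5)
Your proposal is correct and follows essentially the same route as the paper's proof: both arguments chain together the affineness of $\nabla_t$, Lemma \ref{lemma_sharp>}, relation \eqref{ts} applied to the positive matrices $\Re A,\Re B$, and Lemma \ref{lemma_sharpsec} applied to the pair $A$, $A\nabla_t B$ — you merely traverse the chain from the left-hand side forward while the paper starts from the right-hand side and works backward. Your extra care about $W(A\nabla_t B)\subset S_\alpha$ (via the convex-cone property of $S_\alpha$) is a point the paper leaves implicit, but the substance of the argument is identical.
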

\begin{proof} We have,
\begin{eqnarray}
\Re(A\nabla_{t}(A\sharp_{s}B))&=&\Re A\nabla_{t}\Re(A\sharp_{s}B)\nonumber\\
&\geq &\Re A\nabla_{t}(\Re A\sharp_{s}\Re B)\nonumber\\
&\geq &\Re A\sharp_{s}(\Re A\nabla_{t}\Re B)\hspace{2cm} (\text{by}\; \eqref{ts})\nonumber\\
&=&\Re A\sharp_{s}\Re(A\nabla_{t}B)\nonumber\\
&\geq & \cos^{2}(\alpha)\ \Re(A\sharp_{s}(A\nabla_{t}B)),\hspace{2cm} (\text{by}\; \eqref{sharpsec})\nonumber
\end{eqnarray}
which completes the proof.
\end{proof}  

\section{Arbitrary means of accretive matrices}
We have introduced matrix means for positive matrices earlier in the introduction, and we have seen that if $f\in \mathfrak{m}$, then a probability measure $\nu_f$ exists such that for positive  $A,B$, one has
\begin{align}\label{nee3}
    A\sigma_f B&=A^{\frac{1}{2}}f\left(A^{-\frac{1}{2}}BA^{-\frac{1}{2}}\right)A^{\frac{1}{2}}\\
    &=\int_{0}^{1}A!_{t}B\;d\nu_f(t).
\end{align}
Also, we have discussed the geometric mean of accretive matrices following this point of view.

Our goal in this section is to extend the definition of  an arbitrary matrix mean to the context of accretive matrices. This study will generalize the geometric mean idea to all matrix means. Our central definition in this section reads as follows.
\begin{definition}\label{def_sigma_f}
Let $A,B\in\mathcal{M}_n$ be two accretive matrices, $f\in \mathfrak{m}$, and let $\nu_f$ be the probability measure as in Lemma \ref{hansen_lem}. We define the matrix mean $\sigma_f$, induced by $f$, of $A$ and $B$ by
$$A\sigma_f B=\int_{0}^{1}A!_tB\;d\nu_f(t).$$
\end{definition}

\begin{remark}
Our first remark is that we adopt the above defintion for accretive matrices only. Notice that for $A\sigma_f B$ to be defined, we must have $A!_t B$ defined for all $t\in [0,1].$ This means that we must have $(1-t)A^{-1}+tB^{-1}$ invertible, for all $t\in [0,1].$ When $A$ and $B$ are both accretive, this is guaranteed. However, if they are not accretive, we have no control over this. This is the main reason we restrict ourselves to accretive matrices in this definition, and in the following discussion.
\end{remark}

When $A$ and $B$ are accretive, Drury \cite{drury} showed that the spectrum of the matrix $A^{-1/2}BA^{-1/2}$ is disjoint from $(-\infty,0]$ (see Theorem \ref{thm}); justifying the use of $f\left(A^{-1/2}BA^{-1/2}\right)$ in the following result, with the aid of Theorem \ref{thm_main_1}. In what follows, we show the accretive version of \eqref{nee3}, as a main application of Theorem \ref{thm_main_1}.
 \begin{theorem}
 Let $A,B\in\mathcal{M}_n$ be accretive matrices and let $f\in \mathfrak{m}.$ Then
 $$A\sigma_f B=A^{\frac{1}{2}}f\left(A^{-\frac{1}{2}}BA^{-\frac{1}{2}}\right)A^{\frac{1}{2}}.$$
 \end{theorem}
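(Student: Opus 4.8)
The plan is to reduce the claimed identity to Theorem~\ref{thm_main_1} by conjugating with $A^{1/2}$. Write $C=A^{-1/2}BA^{-1/2}$. Since $A$ and $B$ are accretive, Drury's observation (reproduced in the proof of Theorem~\ref{thm}) shows $\lambda(C)\cap(-\infty,0]=\emptyset$, so $f(C)$ is well defined via the Dunford integral \eqref{dunford_intro} and, by Theorem~\ref{thm_main_1}, satisfies
\begin{equation*}
f(C)=\int_{0}^{1}\mathcal{I}!_t C\;d\nu_f(t).
\end{equation*}
Thus $A^{1/2}f(C)A^{1/2}=\int_{0}^{1}A^{1/2}(\mathcal{I}!_t C)A^{1/2}\,d\nu_f(t)$, provided we may pull the (bounded linear) operation $X\mapsto A^{1/2}XA^{1/2}$ inside the $\nu_f$-integral; this is routine since the integrand is continuous in $t$ on the compact interval $[0,1]$ and the measure is finite.

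The heart of the argument is then the pointwise identity
\begin{equation*}
A^{1/2}\bigl(\mathcal{I}!_t C\bigr)A^{1/2}=A!_t B,\qquad t\in[0,1],
\end{equation*}
which is exactly the congruence invariance of the (weighted) harmonic mean. For $0<t<1$ one computes directly: $\mathcal{I}!_tC=\bigl((1-t)\mathcal{I}+tC^{-1}\bigr)^{-1}$ with $C^{-1}=A^{1/2}B^{-1}A^{1/2}$, so $A^{1/2}(\mathcal{I}!_tC)A^{1/2}=\bigl((1-t)A^{-1}+tB^{-1}\bigr)^{-1}=A!_tB$; the endpoint cases $t=0,1$ give $A$ and $B$ and hold by continuity (or by the convention $A!_0B=A$, $A!_1B=B$). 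Integrating this identity against $d\nu_f(t)$ and invoking Definition~\ref{def_sigma_f} yields
\begin{equation*}
A\sigma_f B=\int_{0}^{1}A!_t B\;d\nu_f(t)=A^{1/2}\!\left(\int_{0}^{1}\mathcal{I}!_tC\;d\nu_f(t)\right)\!A^{1/2}=A^{1/2}f\!\left(A^{-1/2}BA^{-1/2}\right)A^{1/2},
\end{equation*}
as claimed.

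I do not anticipate a serious obstacle: every ingredient is already in place. The one point needing a word of care is the justification of interchanging the fixed congruence $X\mapsto A^{1/2}XA^{1/2}$ with the vector-valued integral $\int_0^1(\,\cdot\,)\,d\nu_f(t)$ — but this is immediate from linearity and continuity of the map together with finiteness of $\nu_f$, and the same type of interchange was already used in the proof of Theorem~\ref{thm} (and, in a more delicate form, in Proposition~\ref{prop_our_hansen}). Everything else is the elementary algebra of the harmonic mean under congruence, which is well known in the positive case and holds verbatim here because all the inverses involved exist (guaranteed by accretivity, as noted in the Remark following Definition~\ref{def_sigma_f}).
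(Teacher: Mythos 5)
Your proposal is correct and follows essentially the same route as the paper: both proofs rest on the congruence invariance $A^{1/2}(\mathcal{I}!_tC)A^{1/2}=A!_tB$ with $C=A^{-1/2}BA^{-1/2}$, followed by an application of Theorem~\ref{thm_main_1} to identify $\int_0^1\mathcal{I}!_tC\,d\nu_f(t)$ with $f(C)$. You simply spell out two points the paper leaves implicit (the eigenvalue condition on $C$ and the interchange of the congruence with the integral), which is fine but not a different argument.
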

\begin{proof}
By Definition \ref{def_sigma_f}, we have
\begin{align*}
A\sigma_f B&=\int_{0}^{1}A!_t B\;d\nu_f(t)\\
&=\int_{0}^{1}\left((1-t)A^{-1}+tB^{-1}\right)^{-1}d\nu_f(t)\\
&=A^{\frac{1}{2}}\int_{0}^{1}\left((1-t)\mathcal{I}+t\left(A^{-\frac{1}{2}}BA^{-\frac{1}{2}}\right)^{-1}\right)^{-1}d\nu_f(t)A^{\frac{1}{2}}\\
&=A^{\frac{1}{2}}f\left(A^{-\frac{1}{2}}BA^{-\frac{1}{2}}\right)A^{\frac{1}{2}}, 
\end{align*}
where we have used Theorem \ref{thm_main_1} to obtain the last identity. This completes the proof.
\end{proof}
We refer the reader to  \cite{sano}, where operator means of accretive operators on Hilbert spaces were treated. Our definition coincides with the main definition in \cite{sano} when we restrict ourselves to $\mathcal{M}_n$.

Now we begin our investigation by reciting the following result which extends Lemma \ref{2} to any matrix mean.
\begin{proposition}\label{r_a_sigma_b>} Let $ A, B \in \mathcal{M}_n $ be accretive matrices and let $f\in \mathfrak{m}.$ Then  
\begin{equation}
\Re(A\sigma_f B)\geq (\Re A)\;\sigma_f\;(\Re B).\label{3}
\end{equation}
As a consequence, if $A$ and $B$ are accretive, then so is $A\sigma_f B$.
\end{proposition}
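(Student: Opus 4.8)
The plan is to reduce the matrix inequality \eqref{3} to the scalar harmonic-mean inequality already recorded in Lemma \ref{2}, and then to integrate against the probability measure $\nu_f$. First I would invoke Definition \ref{def_sigma_f} to write
$$
A\sigma_f B=\int_{0}^{1}A!_tB\;d\nu_f(t),\qquad
(\Re A)\sigma_f(\Re B)=\int_{0}^{1}(\Re A)!_t(\Re B)\;d\nu_f(t),
$$
the second identity being legitimate because $\Re A,\Re B\in\mathcal{M}_n^{+}$ and for positive matrices Definition \ref{def_sigma_f} reproduces the usual Kubo--Ando mean via \eqref{mean_har_prob_intro}. Taking real parts of the first integral and using that $X\mapsto\Re X=\tfrac{X+X^{*}}{2}$ is a (real-)linear map that commutes with Bochner integration gives
$$
\Re(A\sigma_f B)=\int_{0}^{1}\Re\bigl(A!_tB\bigr)\;d\nu_f(t).
$$

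The key step is then the pointwise comparison. For each fixed $t\in(0,1)$, Lemma \ref{2} gives $\Re(A!_tB)\ge(\Re A)!_t(\Re B)$; the endpoint cases $t=0$ and $t=1$ are trivial since $A!_0B=A$, $A!_1B=B$ and $\Re$ is additive there, so in fact $\Re(A!_tB)\ge(\Re A)!_t(\Re B)$ for all $t\in[0,1]$. Because $\nu_f$ is a positive measure, integrating this operator inequality over $[0,1]$ preserves it, yielding
$$
\Re(A\sigma_f B)=\int_{0}^{1}\Re(A!_tB)\;d\nu_f(t)
\ge\int_{0}^{1}(\Re A)!_t(\Re B)\;d\nu_f(t)=(\Re A)\sigma_f(\Re B),
$$
which is exactly \eqref{3}.

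For the consequence, note that $A,B$ accretive means $\Re A,\Re B>0$, hence $(\Re A)\sigma_f(\Re B)>0$ (a Kubo--Ando mean of positive definite matrices is positive definite, being sandwiched between the harmonic and arithmetic means by $!_\lambda\le\sigma_f\le\nabla_\lambda$-type bounds, or directly from Lemma \ref{hansen_lem} since each $\Re A!_t\Re B>0$ and $\nu_f$ is a probability measure). Combining with \eqref{3} gives $\Re(A\sigma_f B)\ge(\Re A)\sigma_f(\Re B)>0$, so $A\sigma_f B$ is accretive.

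I expect the only genuine subtlety to be the interchange of $\Re(\cdot)$ with the integral and the well-definedness of the Bochner integrals: one must know that $t\mapsto A!_tB$ is continuous (hence integrable) on $[0,1]$, which holds because $A,B$ accretive guarantees $(1-t)A^{-1}+tB^{-1}$ is invertible for every $t\in[0,1]$ (its real part is $(1-t)\Re(A^{-1})+t\Re(B^{-1})>0$ by the invariance of accretivity under inversion), so the integrand is a norm-continuous matrix-valued function on a compact interval. Once that is in place, everything else is the monotonicity of the integral against the positive measure $\nu_f$ together with Lemma \ref{2}; no new estimate is required.
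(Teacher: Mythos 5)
Your proof is correct and follows essentially the same route as the paper's: express $\Re(A\sigma_f B)$ as $\int_0^1 \Re(A!_tB)\,d\nu_f(t)$, apply Lemma \ref{2} pointwise, and integrate against the probability measure $\nu_f$. The extra care you take with the endpoint cases, the interchange of $\Re(\cdot)$ with the integral, and the positivity argument for the accretivity consequence only fills in details the paper leaves implicit.
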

\begin{proof}
Let $ A, B \in \mathcal{M}_n $ be accretive. Then 
\begin{eqnarray}
\displaystyle
\Re(A\sigma_f B)&=&\int_{0}^{1}\Re( A!_{t}B) \; d\nu_f(t) \nonumber \\
&\geq &\int_{0}^{1}\Re( A)!_{t}\Re(B) \; d\nu_f(t) \hspace{1.5cm}(\text{by Lemma}\;\ref{2})\nonumber \\
&=&(\Re A)\;\sigma_f\;(\Re B).\nonumber
\end{eqnarray}
This completes the proof.
\end{proof}

When $A$ and $B$ are sectorial, we have the following reverse of Proposition \ref{r_a_sigma_b>}. 
\begin{proposition} \label{prop_real_a_sigma_b_less}
Let $ A, B\in\mathcal{M}_n $ be accretive matrices such that  $ W(A), W(B)\subset S_{\alpha},$ for some $0\leq \alpha<\frac{\pi}{2}$. If $f\in \mathfrak{m}$, then
\begin{equation}
\Re(A\sigma_f B)\leq \sec^{2}(\alpha)\;(\Re A)\;\sigma_f\;(\Re B).\label{rleqsec}
\end{equation}
\end{proposition}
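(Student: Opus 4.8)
The plan is to reduce everything to the scalar-weighted harmonic-mean integral representation supplied by Definition \ref{def_sigma_f}, exactly as in the proof of Proposition \ref{r_a_sigma_b>}, and then apply the sectorial reverse inequality for the weighted harmonic mean, namely Lemma \ref{lemma_reverse_real_!t}. Concretely, I would first observe that since $W(A),W(B)\subset S_\alpha$ and the sector $S_\alpha$ is invariant under inversion and under convex combinations (addition), the matrix $A!_tB=((1-t)A^{-1}+tB^{-1})^{-1}$ again has numerical range in $S_\alpha$ for every $t\in[0,1]$; this is the point where one invokes \cite[Proposition 3.2]{sab-lama} (the same justification used in the proof of Theorem \ref{theorem_mixed_sharp_nabla_har}). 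This guarantees that Lemma \ref{lemma_reverse_real_!t} is applicable to the pair $(A,B)$ for each fixed $t$.

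Next I would write, using Definition \ref{def_sigma_f} and linearity of $\Re$ together with the fact that $\nu_f$ is a probability measure,
\begin{align*}
\Re(A\sigma_f B)=\int_0^1 \Re(A!_tB)\,d\nu_f(t)\leq \sec^2(\alpha)\int_0^1 (\Re A)!_t(\Re B)\,d\nu_f(t)=\sec^2(\alpha)\,(\Re A)\,\sigma_f\,(\Re B),
\end{align*}
where the middle inequality is Lemma \ref{lemma_reverse_real_!t} applied pointwise in $t$, and the last equality is again Definition \ref{def_sigma_f} applied to the positive definite matrices $\Re A$ and $\Re B$ (for which the scalar integral representation reproduces the Kubo–Ando mean $\sigma_f$, cf. \eqref{mean_har_prob_intro}). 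The scalar $\sec^2(\alpha)$ is nonnegative and independent of $t$, so it pulls out of the integral without difficulty, and integrating an operator inequality $X(t)\le Y(t)$ against a positive measure preserves the inequality.

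The only genuinely delicate point is the preservation of the sectorial condition under the harmonic mean, i.e. that $W(A!_tB)\subset S_\alpha$; this is what makes Lemma \ref{lemma_reverse_real_!t} legitimately applicable, and it is the step I would state carefully rather than wave at. Everything else—exchanging $\Re$ with the integral, pulling out the constant, and recognizing the resulting integral as $(\Re A)\,\sigma_f\,(\Re B)$—is routine given the machinery already established (Theorem \ref{thm_main_1} and the discussion around \eqref{mean_har_prob_intro}). One should also note in passing that $A!_tB$ itself is well-defined for all $t\in[0,1]$ precisely because $A$ and $B$ are accretive (as in the Remark following Definition \ref{def_sigma_f}), so the integrand makes sense throughout; no convergence issue arises since the integrand is continuous in $t$ on the compact interval $[0,1]$ and the measure is finite.
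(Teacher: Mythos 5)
Your proof is correct and is essentially identical to the paper's: both write $\Re(A\sigma_f B)=\int_0^1\Re(A!_tB)\,d\nu_f(t)$ via Definition \ref{def_sigma_f}, apply Lemma \ref{lemma_reverse_real_!t} pointwise in $t$, and recognize the resulting integral as $\sec^2(\alpha)\,(\Re A)\,\sigma_f\,(\Re B)$. The one point you flag as delicate --- that $W(A!_tB)\subset S_\alpha$ --- is actually superfluous here, since the hypotheses of Lemma \ref{lemma_reverse_real_!t} concern only $A$ and $B$ (which are sectorial by assumption), not the mean $A!_tB$ itself; all that is needed of $A!_tB$ is that it be well-defined, which accretivity already gives.
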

\begin{proof}
By Definition \ref{def_sigma_f}, we have 
\begin{eqnarray}
\displaystyle
\Re(A\sigma_f B)&=&\int_{0}^{1}\Re( A!_{t}B) \; d\nu_f(t) \nonumber \\
&\leq &\sec^{2}(\alpha)\int_{0}^{1} \;\left( \Re(A)!_{t}\Re(B)\right)  \; d\nu_f(t)\hspace{0.4cm}({\text{by Lemma}}\;\ref{lemma_reverse_real_!t})\nonumber \\
&=&\sec^{2}(\alpha)\;(\Re A)\;\sigma_f\; (\Re B)\nonumber .
\end{eqnarray}
This completes the proof.
\end{proof}

Now, we present a generalization of Lemma \ref{AM-GM-HM} from the setting of positive matrices to  sectorial ones.

\begin{theorem}\label{thm_amgmhm_accretive}
Let $ A, B\in\mathcal{M}_n $ be accretive matrices such that $W(A), W(B)\subset S_{\alpha}$ for some $0\leq \alpha<\frac{\pi}{2}$. If $f\in \mathfrak{m}$ is such that  $f'(1)=t$ for some $t\in (0,1),$ then 
\begin{equation}
 \cos^{2}(\alpha)\;\Re(A!_{t}B)   \ \leq\ \Re(A\sigma_f B)\ \leq\ \sec^{2}(\alpha)\;\Re(A\nabla_{t}B).\label{amgmhm_accretive}   
\end{equation}
\end{theorem}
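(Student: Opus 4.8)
The strategy is to reduce the sectorial inequality \eqref{amgmhm_accretive} to the positive case (Lemma \ref{AM-GM-HM}) applied to the real parts, using the two-sided control of $\Re(A!_{t}B)$ by $(\Re A)!_{t}(\Re B)$ provided by Lemma \ref{2} and Lemma \ref{lemma_reverse_real_!t}, together with the matching control of $\Re(A\nabla_{t}B)$, which is in fact an equality since the arithmetic mean commutes with taking real parts. The key observation is that $f'(1)=t$ forces $\nu_f$ to have ``barycenter'' $t$ in the precise sense that Lemma \ref{AM-GM-HM} gives $X!_{t}Y\leq X\sigma_f Y\leq X\nabla_{t}Y$ for positive $X,Y$; we will apply this with $X=\Re A$ and $Y=\Re B$.

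\textbf{Lower bound.} First I would write, using Definition \ref{def_sigma_f} and then Lemma \ref{2} termwise under the integral,
\begin{align*}
\Re(A\sigma_f B)&=\int_{0}^{1}\Re(A!_{t}B)\,d\nu_f(t)\\
&\geq \int_{0}^{1}(\Re A)!_{t}(\Re B)\,d\nu_f(t)\\
&=(\Re A)\,\sigma_f\,(\Re B)\\
&\geq (\Re A)!_{t}(\Re B),
\end{align*}
where the last inequality is the left half of Lemma \ref{AM-GM-HM} applied to the positive matrices $\Re A,\Re B$. Then Lemma \ref{lemma_reverse_real_!t} gives $(\Re A)!_{t}(\Re B)\geq \cos^{2}(\alpha)\,\Re(A!_{t}B)$, so chaining these yields $\Re(A\sigma_f B)\geq \cos^{2}(\alpha)\,\Re(A!_{t}B)$, which is the left inequality in \eqref{amgmhm_accretive}.

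\textbf{Upper bound.} Symmetrically, using Lemma \ref{lemma_reverse_real_!t} termwise gives $\Re(A\sigma_f B)\leq \sec^{2}(\alpha)\,(\Re A)\,\sigma_f\,(\Re B)$ (this is exactly Proposition \ref{prop_real_a_sigma_b_less}); then the right half of Lemma \ref{AM-GM-HM} gives $(\Re A)\,\sigma_f\,(\Re B)\leq (\Re A)\nabla_{t}(\Re B)=\Re(A\nabla_{t}B)$, the last step being the trivial identity $\Re((1-t)A+tB)=(1-t)\Re A+t\Re B$. Combining, $\Re(A\sigma_f B)\leq \sec^{2}(\alpha)\,\Re(A\nabla_{t}B)$.

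\textbf{Main obstacle.} The only subtle point is legitimacy of applying the scalar-type lemmas inside the integral and the interchange of $\Re$ with the integral against $\nu_f$; this is routine since $\Re$ is a real-linear continuous map and the integrand $t\mapsto \Re(A!_{t}B)$ is continuous and bounded on $[0,1]$ (as in the proofs of Propositions \ref{r_a_sigma_b>} and \ref{prop_real_a_sigma_b_less}), so no genuine difficulty arises. One should also note that $W(A),W(B)\subset S_{\alpha}$ is needed precisely to invoke Lemma \ref{lemma_reverse_real_!t}, while the lower bound's appeal to Lemma \ref{2} needs only accretivity; the sectorial hypothesis is therefore used symmetrically on both sides, which is why the constants $\cos^{2}(\alpha)$ and $\sec^{2}(\alpha)$ appear.
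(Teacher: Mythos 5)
Your proof is correct and follows essentially the same route as the paper: both bounds are obtained by sandwiching $\Re(A\sigma_f B)$ between $(\Re A)\sigma_f(\Re B)$ via Proposition \ref{r_a_sigma_b>} and Proposition \ref{prop_real_a_sigma_b_less}, then invoking Lemma \ref{AM-GM-HM} for the positive matrices $\Re A,\Re B$ and Lemma \ref{lemma_reverse_real_!t} to return to $\Re(A!_tB)$. The only difference is that you write the lower-bound chain starting from $\Re(A\sigma_f B)$ rather than from $\Re(A!_tB)$, which is purely presentational.
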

\begin{proof} First,
\begin{eqnarray}
\displaystyle
\Re(A!_{t}B)&\leq &\sec^{2}(\alpha) \left((\Re A)!_{t}(\Re B)\right) \hspace{4cm}(\text{by Lemma}\;\ref{lemma_reverse_real_!t}) \nonumber\\
&\leq &\sec^{2}(\alpha) \left((\Re A)\sigma_f(\Re B)\right) \hspace{3cm}(\text{by Lemma}\;\ref{AM-GM-HM})\nonumber\\
&\leq &\sec^{2}(\alpha)\;\Re (A\sigma_f B).\hspace{4cm}(\text{by}\;\eqref{3})\nonumber
\end{eqnarray}
Thus, we have shown the first inequality. To show the second inequality, we have
\begin{eqnarray}
\displaystyle
\Re(A\sigma_f B)&\leq &\sec^{2}(\alpha) (\Re A)\;\sigma_f\;(\Re B)\hspace{2cm}(\text{by}\;\eqref{rleqsec})\nonumber \\
&\leq &\sec^{2}(\alpha)\; (\Re A)\nabla_{t}(\Re B)
\hspace{2cm}(\text{by Lemma}\; \ref{AM-GM-HM})\nonumber\\
&= &\sec^{2}(\alpha)\;\Re(A\nabla_{t}B).\nonumber
\end{eqnarray}
This shows the second desired inequality, and the proof is complete.
\end{proof}
We notice that when $A,B$ are positive, then $\alpha$ can be taken as $\alpha=0$, which then retrieves Lemma \ref{AM-GM-HM} as a special case of Theorem \ref{thm_amgmhm_accretive}.

The next result is a monotonic result for matrix means of accretive matrices. This result simulates the same known conclusion for positive ones.
  \begin{theorem} Let $ A, B, C, D\in\mathcal{M}_n $ be accretive matrices such that $ \Re A\leq \Re C, \Re B\leq \Re D $ and $\;W(A), W(B)\subset S_{\alpha}$ for some $0\leq \alpha<\frac{\pi}{2}$. If $f\in \mathfrak{m}$, then 
 \begin{equation}
 \Re(A\sigma_f B)\leq\ \sec^{2}(\alpha)\; \Re(C\sigma_f D).
 \end{equation}
\end{theorem}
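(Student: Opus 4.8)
The plan is to chain together the monotonicity of the weighted harmonic mean in its two arguments with the two Real-part comparison results already established, namely Proposition \ref{r_a_sigma_b>} and Proposition \ref{prop_real_a_sigma_b_less}. First I would recall that for positive definite matrices $X,Y,Z,W$ with $X\le Z$ and $Y\le W$, one has $X!_tY\le Z!_tW$ for every $t\in[0,1]$; this is the standard monotonicity of the harmonic mean and follows from the operator antitonicity of inversion together with the linearity of the map $(X,Y)\mapsto (1-t)X^{-1}+tY^{-1}$. Applying this with $X=\Re A$, $Z=\Re C$, $Y=\Re B$, $W=\Re D$ (all positive definite since $A,B,C,D$ are accretive), and the hypotheses $\Re A\le\Re C$, $\Re B\le\Re D$, we get $(\Re A)!_t(\Re B)\le(\Re C)!_t(\Re D)$ for all $t$. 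Integrating against the probability measure $\nu_f$ yields $(\Re A)\,\sigma_f\,(\Re B)\le(\Re C)\,\sigma_f\,(\Re D)$.

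Next I would sandwich $\Re(A\sigma_f B)$ between real parts of the harmonic-mean-type quantities. By Proposition \ref{prop_real_a_sigma_b_less}, since $W(A),W(B)\subset S_\alpha$, we have
\begin{equation*}
\Re(A\sigma_f B)\le \sec^2(\alpha)\,(\Re A)\,\sigma_f\,(\Re B).
\end{equation*}
Then apply the monotonicity established in the previous paragraph to bound $(\Re A)\,\sigma_f\,(\Re B)\le(\Re C)\,\sigma_f\,(\Re D)$. Finally, by Proposition \ref{r_a_sigma_b>} applied to the accretive matrices $C,D$, we have $(\Re C)\,\sigma_f\,(\Re D)\le\Re(C\sigma_f D)$. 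Combining the three inequalities gives
\begin{equation*}
\Re(A\sigma_f B)\le \sec^2(\alpha)\,(\Re A)\,\sigma_f\,(\Re B)\le \sec^2(\alpha)\,(\Re C)\,\sigma_f\,(\Re D)\le \sec^2(\alpha)\,\Re(C\sigma_f D),
\end{equation*}
which is exactly the claimed estimate.

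I do not anticipate a serious obstacle here: every ingredient is already available in the paper. The only point requiring a line of care is the monotonicity of $!_t$ in both variables simultaneously, but this is classical and reduces immediately to $X\le Z\Rightarrow Z^{-1}\le X^{-1}$ for positive definite matrices. One should also note for completeness that the two cited propositions apply: Proposition \ref{prop_real_a_sigma_b_less} needs the sectoriality hypothesis $W(A),W(B)\subset S_\alpha$, which is assumed, while Proposition \ref{r_a_sigma_b>} needs only that $C,D$ be accretive, which holds since a matrix with $\Re(\,\cdot\,)$ dominating the positive definite $\Re A$ (resp.\ $\Re B$) is itself accretive. When $A,B$ are positive we may take $\alpha=0$ and recover the usual joint monotonicity of $\sigma_f$ on $\mathcal{M}_n^+$.
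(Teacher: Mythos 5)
Your argument is correct and follows essentially the same route as the paper: the authors also chain the sectorial reverse inequality $\Re(A\sigma_f B)\le\sec^2(\alpha)\,(\Re A)\sigma_f(\Re B)$ (obtained by applying Lemma \ref{lemma_reverse_real_!t} under the integral, which is exactly the content of Proposition \ref{prop_real_a_sigma_b_less}), the joint monotonicity of $!_t$ on positive matrices, and then $(\Re C)\sigma_f(\Re D)\le\Re(C\sigma_f D)$ from Proposition \ref{r_a_sigma_b>}. Your version merely cites the two propositions instead of re-deriving them at the level of the integrand, which is a harmless repackaging of the same proof.
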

\begin{proof} We have
\begin{eqnarray}
\displaystyle
\Re(A\sigma_f B)&=&\Re\left(\int_{0}^{1} A!_{t}B \; d\nu_f(t) \right)\nonumber \\
&\leq &\sec^{2}(\alpha) \int_{0}^{1}\Re( A)!_{t}\Re(B) \; d\nu_f(t)\hspace{3cm}(\text{by Lemma}\;\ref{lemma_reverse_real_!t})\nonumber \\
&\leq &\sec^{2}(\alpha) \int_{0}^{1}\Re(C)!_{t}\Re(D) \; d\nu_f(t) \nonumber \\
&=&\sec^{2}(\alpha) (\Re C)\sigma_f (\Re D)\nonumber \\
&\leq &\sec^{2}(\alpha)\;\Re( C\sigma_f D),\nonumber 
\end{eqnarray}
where in the above proof, we have used the fact that $\sigma_f$ is monotone on $\mathcal{M}_n^+$, justifying the inequality $\Re(A)!_t\Re(B)\leq \Re(C)!_t\Re(D).$ This completes the proof.
\end{proof}
Next we show the so called ``transformer identity" for matrix means of accretive matrices. This result, again, simulates the corresponding result for positive matrices. We first make the following observation. If $A$ is accretive and $C$ is any matrix, we have for any vector $x$,
$\left<C^*AC\;x,x\right>=\left<A(Cx),Cx\right>,$ which belongs to the right-half complex plane, since $A$ is accretive. This shows that when $A$ is accretive and $C$ is any matrix, then $C^*AC$ is also accretive. We can then state the following result.
\begin{theorem} Let $ A, B\in\mathcal{M}_n $ be accretive and let $f\in \mathfrak{m}$. Then for any invertible   $ C\in\mathcal{M}_n $,
\begin{equation}
C^{*}(A\sigma_f B)C=(C^{*}AC)\sigma_f (C^{*}BC).
\end{equation}
\end{theorem}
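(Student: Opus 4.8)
The plan is to reduce the transformer identity for $\sigma_f$ on accretive matrices to the already-known transformer identity for the harmonic mean $!_t$, using the integral definition in Definition~\ref{def_sigma_f}. First I would recall that for accretive matrices $A,B$ and an invertible $C$, both $C^*AC$ and $C^*BC$ are accretive (this is the observation just stated before the theorem), so $(C^*AC)\sigma_f(C^*BC)$ is well defined. Then I would write, using Definition~\ref{def_sigma_f},
\begin{align*}
(C^*AC)\sigma_f(C^*BC)&=\int_{0}^{1}(C^*AC)!_t(C^*BC)\;d\nu_f(t).
\end{align*}

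The key step is to verify the pointwise identity $(C^*AC)!_t(C^*BC)=C^*(A!_tB)C$ for each $t\in[0,1]$. For $0<t<1$ this is the classical transformer identity for the weighted harmonic mean: indeed
\begin{align*}
(C^*AC)!_t(C^*BC)&=\bigl((1-t)(C^*AC)^{-1}+t(C^*BC)^{-1}\bigr)^{-1}\\
&=\bigl(C^{-1}\bigl((1-t)A^{-1}+tB^{-1}\bigr)(C^*)^{-1}\bigr)^{-1}\\
&=C^*\bigl((1-t)A^{-1}+tB^{-1}\bigr)^{-1}C=C^*(A!_tB)C,
\end{align*}
where I used $(C^*AC)^{-1}=C^{-1}A^{-1}(C^*)^{-1}$ and the corresponding identity for $B$. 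The endpoint cases $t=0$ and $t=1$ give $A!_0B=A$ and $A!_1B=B$, for which the identity $C^*AC=C^*AC$ is trivial; since $\nu_f$ is a probability measure on $[0,1]$ these endpoints are in any case harmless.

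Finally I would pull $C^*$ and $C$ outside the integral by linearity:
\begin{align*}
(C^*AC)\sigma_f(C^*BC)&=\int_{0}^{1}C^*(A!_tB)C\;d\nu_f(t)=C^*\left(\int_{0}^{1}A!_tB\;d\nu_f(t)\right)C=C^*(A\sigma_fB)C,
\end{align*}
where the interchange of $C^*(\cdot)C$ with the Bochner-type integral is justified because $X\mapsto C^*XC$ is a bounded linear map on $\mathcal{M}_n$ and the integrand $t\mapsto A!_tB$ is continuous on $[0,1]$ (hence integrable against the probability measure $\nu_f$). I do not anticipate a serious obstacle here: the only point requiring a word of care is that all inverses appearing are legitimate, which is exactly why the accretivity hypothesis (preserved under $X\mapsto C^*XC$) is imposed, as already noted in the Remark following Definition~\ref{def_sigma_f}; everything else is the scalar-level transformer identity integrated termwise.
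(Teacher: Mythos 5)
Your proof is correct and follows essentially the same route as the paper: both arguments use the integral representation $A\sigma_f B=\int_{0}^{1}A!_tB\,d\nu_f(t)$, pass $C^*(\cdot)C$ through the integral by linearity, and invoke the transformer identity for the weighted harmonic mean $!_t$. The only difference is that you spell out the elementary inverse computation behind $(C^*AC)!_t(C^*BC)=C^*(A!_tB)C$ and the well-definedness of the right-hand side, which the paper takes for granted.
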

\begin{proof}
Let $ C\in\mathcal{M}_n $ be invertible. Then
\begin{eqnarray}
\displaystyle
C^{*}(A\sigma_f B)C&=&C^{*}\left(\int_{0}^{1} A!_{t}B \; d\nu_f(t)\right)C \nonumber \\
&= &\int_{0}^{1}C^{*}(A !_{t} B)C \; d\nu_f(t)  \nonumber \\
&= &\int_{0}^{1}(C^{*}AC)!_{t}(C^{*}BC) \; d\nu_f(t) \nonumber \\
&=&(C^{*}AC)\sigma_f (C^{*}BC),\nonumber 
\end{eqnarray}
which completes the proof.
\end{proof}

In studying matrix means, it is customary to compare between different means that arise from different matrix monotone functions. In the next result, we present such comparison for sectorial matrices.

\begin{theorem}\label{theorem_square_sigma}
 Let $ A, B\in\mathcal{M}_n $ be accretive matrices such that $0<m\mathcal{I}\leq \Re A, \Re B\leq M\mathcal{I} $ and $ W(A), W(B)\subset S_{\alpha} $ for some $0\leq \alpha<\frac{\pi}{2}$. If $f,g\in \mathfrak{m}$. Then for every unital positive linear map $ \Phi $,
\begin{equation}
\left\| \Phi(\Re(A\sigma_{f} B))\Phi(\Re(A\sigma_{g} B))^{-1}\right\|_\infty \leq \sec^{6}(\alpha)K(m,M) ,\label{tau}
\end{equation}
where $\|\cdot\|_{\infty}$ is the usual operator norm and $K(m,M)=\dfrac{(M+m)^{2}}{4Mm}. $
\end{theorem}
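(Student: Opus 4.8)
The plan is to reduce the two-sided operator-norm estimate \eqref{tau} to a Kantorovich-type comparison between the positive matrices $\Phi(\Re(A\sigma_f B))$ and $\Phi(\Re(A\sigma_g B))$, using the sectorial machinery already assembled to pass between $\Re(A\sigma_h B)$ and $(\Re A)\sigma_h(\Re B)$ for $h\in\mathfrak m$. First I would record that, since $W(A),W(B)\subset S_\alpha$ and $S_\alpha$ is closed under positive combinations and inversion, the harmonic means $A!_tB$ lie in $S_\alpha$, so by Proposition \ref{r_a_sigma_b>} and Proposition \ref{prop_real_a_sigma_b_less} one has the sandwich
\begin{equation*}
(\Re A)\sigma_h(\Re B)\ \le\ \Re(A\sigma_h B)\ \le\ \sec^2(\alpha)\,(\Re A)\sigma_h(\Re B)
\end{equation*}
for every $h\in\mathfrak m$, in particular for $h=f$ and $h=g$. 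Applying the positive unital map $\Phi$ (which is order preserving) gives, with $X_h:=\Phi\bigl((\Re A)\sigma_h(\Re B)\bigr)$,
\begin{equation*}
X_f\ \le\ \Phi(\Re(A\sigma_f B))\ \le\ \sec^2(\alpha)\,X_f,\qquad
X_g\ \le\ \Phi(\Re(A\sigma_g B))\ \le\ \sec^2(\alpha)\,X_g .
\end{equation*}

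Next I would invoke the positive-matrix input of the paper: since $m\mathcal I\le\Re A,\Re B\le M\mathcal I$, the means $(\Re A)\sigma_f(\Re B)$ and $(\Re A)\sigma_g(\Re B)$ both lie between $m\mathcal I$ and $M\mathcal I$, and the Hoa-type Lemma quoted in Section 2 (the one giving $\Phi^2(A\sigma_f B)\le K(h)^2\Phi^2(A\sigma_g B)$ with $h=M/m$) yields $X_f^2\le K(M/m)^2 X_g^2$, equivalently $X_f\le K(M/m)\,X_g$ after taking operator-monotone square roots, and symmetrically $X_g\le K(M/m)\,X_f$; note $K(M/m)=\frac{(M/m+1)^2}{4M/m}=\frac{(M+m)^2}{4Mm}=K(m,M)$. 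I would then chain the inequalities: from $\Phi(\Re(A\sigma_f B))\le\sec^2(\alpha)X_f\le\sec^2(\alpha)K(m,M)X_g\le\sec^2(\alpha)K(m,M)\Phi(\Re(A\sigma_g B))$ we get one half, and since $X_g\le\Phi(\Re(A\sigma_g B))$ has an invertible (hence positive definite) right-hand side bounded below by $m\mathcal I$, we may multiply on the right by $\Phi(\Re(A\sigma_g B))^{-1}$. Because the product of positive matrices need not be positive, I would pass to operator norm via $\|PQ^{-1}\|_\infty\le\|P^{1/2}\|_\infty\,\|P^{1/2}Q^{-1}\|_\infty$ and the standard fact that $0\le P\le cQ$ with $Q>0$ implies $\|Q^{-1/2}PQ^{-1/2}\|_\infty\le c$, hence $\|PQ^{-1}\|_\infty=\|Q^{-1/2}PQ^{-1/2}\cdot Q^{1/2}\cdot Q^{-1/2}\|$ — more cleanly, $\|PQ^{-1}\|_\infty^2=\|Q^{-1}P^2Q^{-1}\|_\infty\le\|Q^{-1/2}PQ^{-1/2}\|_\infty^2$ when $P,Q>0$, so $P\le cQ$ gives $\|PQ^{-1}\|_\infty\le c$.

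Putting the two chains together, with $c=\sec^2(\alpha)K(m,M)$ for the bound $\Phi(\Re(A\sigma_f B))\le c\,\Phi(\Re(A\sigma_g B))$, one would naively obtain $\sec^2(\alpha)K(m,M)$; the extra powers $\sec^6(\alpha)$ in \eqref{tau} come from being forced to use the \emph{two-sided} sectorial sandwich on both means and on both sides of the norm manipulation (each of the three applications of the sectorial comparison — once for $f$, once for $g$, once more when converting the lower bound $X_g\le\Phi(\Re(A\sigma_g B))$ into a usable reverse estimate $\Phi(\Re(A\sigma_g B))\le\sec^2(\alpha)X_g$ needed to close the product norm bound) contributes a factor $\sec^2(\alpha)$. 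The main obstacle I anticipate is precisely this bookkeeping: organizing the inequalities so that exactly three factors of $\sec^2(\alpha)$ and one factor of $K(m,M)$ emerge, and justifying the step $\|PQ^{-1}\|_\infty\le c$ from $0\le P\le cQ$, $Q>0$ carefully since we are outside the self-adjoint world where $P\le cQ$ would trivially give the norm bound. Everything else — the sectorial sandwich, the Kantorovich comparison of the positive means, the order-preservation of $\Phi$ — is quoted directly from the lemmas in Section 2 and the propositions proved earlier in this section.
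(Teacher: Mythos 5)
The structure of your reduction (sectorial sandwich, then a Kantorovich comparison of the positive means $X_f,X_g$) is reasonable, but the final step is wrong, and it is precisely the step the paper's proof is built to avoid. You claim that $0\le P\le cQ$ with $Q>0$ implies $\|PQ^{-1}\|_\infty\le c$, justified by ``$\|PQ^{-1}\|_\infty^2=\|Q^{-1}P^2Q^{-1}\|_\infty\le\|Q^{-1/2}PQ^{-1/2}\|_\infty^2$''. That inequality goes the wrong way: since $Q^{-1/2}PQ^{-1/2}$ is positive and similar to $PQ^{-1}$, one has $\|Q^{-1/2}PQ^{-1/2}\|_\infty=\rho(PQ^{-1})\le\|PQ^{-1}\|_\infty$, so $P\le cQ$ only controls the \emph{spectral radius} of $PQ^{-1}$, not its operator norm. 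A concrete counterexample: with
$P=\left(\begin{smallmatrix}1&0\\0&0\end{smallmatrix}\right)$ and $Q=\left(\begin{smallmatrix}2&1\\1&1\end{smallmatrix}\right)$ one has $Q-P=\left(\begin{smallmatrix}1&1\\1&1\end{smallmatrix}\right)\ge0$, so $P\le Q$, yet $PQ^{-1}=\left(\begin{smallmatrix}1&-1\\0&0\end{smallmatrix}\right)$ has $\|PQ^{-1}\|_\infty=\sqrt2>1$. So your chain, even if every earlier inequality is granted, does not yield \eqref{tau}; the fact that it would ``naively'' give the stronger constant $\sec^2(\alpha)K(m,M)$ is a symptom of this, and the hand-waving about where the remaining two factors of $\sec^2(\alpha)$ come from cannot be made precise along this route.

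The paper's proof circumvents exactly this obstruction by never passing from an operator inequality $P\le cQ$ to a norm bound on $PQ^{-1}$. Instead it uses the Bhatia--Kittaneh inequality $\|PQ\|\le\frac14\|(P+Q)^2\|$ (Lemma \ref{normA+B}) applied to $P=\sec^2(\alpha)\Phi(\Re(A\sigma_fB))$ and $Q=Mm\,\Phi(\Re(A\sigma_gB))^{-1}$, then controls the \emph{sum} $P+Q$ via Choi's inequality $\Phi(X)^{-1}\le\Phi(X^{-1})$, the accretive AM--GM--HM bounds of Theorem \ref{thm_amgmhm_accretive}, and the elementary Kantorovich-type estimate $\Re(A\nabla B)+Mm\,\Re\bigl((A!B)^{-1}\bigr)\le(M+m)\mathcal{I}$ obtained from $(M-\Re A)(m-\Re A)(\Re A)^{-1}\le0$ together with Lemma \ref{RA}. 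This is where the Hoa lemma's ``squaring'' and the extra powers of $\sec(\alpha)$ genuinely originate. If you want to salvage your approach you must replace the false norm lemma by an argument of this arithmetic--geometric mean type; the rest of your setup (the two-sided sandwich and the order-preservation of $\Phi$) is fine but insufficient on its own.
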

\begin{proof} Since $0<m\mathcal{I}\leq\ \Re A\leq M\mathcal{I},$\\
we have $$(M-\Re A)(m-\Re A)(\Re A)^{-1}\leq 0 ,$$
which is equivalent to
 $$ \Re A+Mm(\Re A)^{-1}\leq (M+m)\mathcal{I},$$
since $(\Re A)^{-1}\geq \Re (A^{-1})$ by Lemma \ref{RA}, we have 
\begin{align}
\dfrac{1}{2}\Re A+\dfrac{1}{2}Mm\Re (A^{-1})\leq \dfrac{1}{2}(M+m)\mathcal{I}.\label{*}
\end{align}
Similarly
\begin{align}
\dfrac{1}{2}\Re B+\dfrac{1}{2}Mm\Re (B^{-1})\leq \dfrac{1}{2}(M+m)\mathcal{I}.\label{**}
\end{align}
Adding \eqref{*} and \eqref{**}, we get  
\begin{equation}
\Re(A\nabla B)+Mm\Re(A!B)^{-1}\leq\ (M+m)\mathcal{I}. \label{M+m}
\end{equation}

Letting $\|\cdot\|_{\infty}$ denote the usual operator norm, we have

\begin{eqnarray}
&&\parallel \sec^{2}(\alpha)Mm\Phi(\Re(A\sigma_{f} B))\Phi(\Re(A\sigma_{g} B))^{-1}\parallel_\infty \nonumber\\
\displaystyle
&\leq &\dfrac{1}{4}\parallel \sec^{2}(\alpha)\Phi(\Re(A\sigma_{f} B))+Mm\Phi(\Re(A\sigma_{g} B))^{-1}\parallel_{\infty}^{2}\hspace{2cm}(\text{by Lemma}\; \ref{normA+B})\nonumber\\
&\leq &\dfrac{1}{4}\parallel \sec^{2}(\alpha)\Phi(\Re(A\sigma_{f} B))+Mm\Phi((\Re(A\sigma_{g} B))^{-1})\parallel_{\infty}^{2}\hspace{2cm}(\text{by \eqref{phi(A)}})\nonumber\\
&\leq &\dfrac{1}{4}\parallel \sec^{4}(\alpha)\Phi(\Re(A\nabla B))+\sec^{2}(\alpha)Mm\Phi(\Re(A! B))^{-1}\parallel_{\infty}^{2}\hspace{1cm}(\text{by}\; \eqref{amgmhm_accretive})\nonumber\\
&\leq &\dfrac{1}{4}\parallel \sec^{4}(\alpha)\Phi(\Re(A\nabla B))+\sec^{4}(\alpha)Mm\Phi(\Re(A! B)^{-1})\parallel_{\infty}^{2}\nonumber\\
&\leq &\dfrac{1}{4}\parallel \sec^{4}(\alpha)\Phi(\Re(A\nabla B)+Mm\Re(A! B)^{-1})\parallel_{\infty}^{2}\nonumber\\
&\leq &\dfrac{1}{4}\sec^{8}(\alpha)(M+m)^{2}\nonumber\hspace{7cm}(\text{by} \eqref{M+m}).\nonumber
\end{eqnarray}
That is
$$\parallel \Phi(\Re(A\sigma_{f} B))\Phi(\Re(A\sigma_{g} B))^{-1}\parallel_\infty\leq \sec^{6}(\alpha)K(m,M), $$
which completes the proof.
 
\end{proof}

\section{Ando-type inequalities for accretive matrices}
In this section we present versions of Ando's inequality \eqref{andos_inequality_intro}.
We begin by stating the following needed lemma which concerns the Ando-type
inequality for the harmonic matrix mean. For this purpose, we notice that if $\Phi$ is a positive linear map and $A$ is any matrix, then 
\begin{equation}\label{real_phi_real}
\Re \Phi(A)=\Phi(\Re A).
\end{equation}
\begin{lemma}  Let $ A, B \in \mathcal{M}_{n} $ be accretive and let $\Phi$ be a unital positive linear map. Then
\begin{equation}
\Phi(\Re A!_{t}\Re B)\leq \Re(\Phi(A)!_{t}\Phi(B)).\label{phi!!}
\end{equation}
\end{lemma}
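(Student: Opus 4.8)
The statement to be proved is the Ando-type inequality for the harmonic mean: for accretive $A,B$ and a unital positive linear map $\Phi$,
$$\Phi(\Re A\,!_t\,\Re B)\leq \Re(\Phi(A)\,!_t\,\Phi(B)).$$
The plan is to reduce everything to the classical Ando inequality for the harmonic mean on positive matrices together with the elementary identity \eqref{real_phi_real}. First I would observe that since $A$ and $B$ are accretive, $\Re A$ and $\Re B$ are positive definite, so $\Re A\,!_t\,\Re B$ is a genuine harmonic mean of positive matrices, and $\Phi(\Re A\,!_t\,\Re B)$ makes sense. Since $!_t$ is a matrix mean on $\mathcal{M}_n^+$, the classical Ando inequality (the harmonic-mean case of \eqref{andos_inequality_intro}, valid even without the unital assumption) gives
$$\Phi(\Re A\,!_t\,\Re B)\leq \Phi(\Re A)\,!_t\,\Phi(\Re B).$$

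The next step is to identify the right-hand side. By \eqref{real_phi_real} we have $\Phi(\Re A)=\Re\Phi(A)$ and $\Phi(\Re B)=\Re\Phi(B)$, so
$$\Phi(\Re A)\,!_t\,\Phi(\Re B)=\Re\Phi(A)\,!_t\,\Re\Phi(B).$$
Now $\Phi(A)$ and $\Phi(B)$ are accretive (the image of an accretive matrix under a positive linear map is accretive, since positivity of $\Re\Phi(A)=\Phi(\Re A)$ follows from $\Re A>0$), so Lemma \ref{2} applies to the pair $\Phi(A),\Phi(B)$:
$$\Re\Phi(A)\,!_t\,\Re\Phi(B)\leq \Re\bigl(\Phi(A)\,!_t\,\Phi(B)\bigr).$$
Chaining the three displayed inequalities yields the claim.

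The only point requiring a little care — and the step I expect to be the main (minor) obstacle — is justifying the use of the classical Ando inequality for $!_t$ together with \eqref{real_phi_real}; one must check that $\Phi$ restricted to Hermitian matrices with the order structure behaves as a positive linear map in the classical sense, and that $\Re$ and $\Phi$ genuinely commute as asserted in \eqref{real_phi_real} (this is immediate from linearity of $\Phi$ and $\Phi(A^*)=\Phi(A)^*$, which holds for positive linear maps). Everything else is a direct concatenation. Note that the unital hypothesis on $\Phi$ is not actually needed for the harmonic-mean Ando inequality itself, but it is harmless to assume it as the lemma does.
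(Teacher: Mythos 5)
Your proposal is correct and follows exactly the paper's own argument: the classical Ando inequality applied to the positive matrices $\Re A,\Re B$, the identity $\Phi(\Re A)=\Re\Phi(A)$, and then Lemma \ref{2} applied to the accretive matrices $\Phi(A),\Phi(B)$. Your extra remarks (accretivity of $\Phi(A)$, $\Phi(B)$ and the dispensability of the unital hypothesis) are correct details the paper leaves implicit.
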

\begin{proof}
Noting \eqref{andos_inequality_intro}, \eqref{real_phi_real} and Lemma \ref{2}, we have
\begin{eqnarray}
\displaystyle
\Phi(\Re A!_{t}\Re B)\leq \Phi (\Re A)!_{t}\Phi(\Re B) =\Re \Phi(A)!_{t}\Re \Phi(B) \leq \Re(\Phi(A)!_{t}\Phi(B)) .
\end{eqnarray}
\end{proof}
Now we are in the position
to state the sectorial  version of \eqref{andos_inequality_intro}, valid for any matrix mean.
\begin{theorem}\label{thm_Phicos}
Let $ A, B \in \mathcal{M}_{n} $ be accretive matrices such that $ W(A), W(B)\subset S_{\alpha},\; 0\leq \alpha<\frac{\pi}{2} $ and let $\Phi$ be a positive linear map. If $f\in\mathfrak{m},$ then
\begin{equation}
  \Re \Phi(A\sigma_f B)\leq \sec^{2}(\alpha)\; \Re
\left( \Phi(A)\sigma_f \Phi(B)\right).\label{Phicos}
\end{equation}
\end{theorem}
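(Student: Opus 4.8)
The plan is to reduce the statement to the harmonic-mean case, where we already have Lemma \eqref{phi!!} (with its accompanying $\sec^2$-type estimates supplied by the preliminary lemmas), and then integrate against the probability measure $\nu_f$. First I would write, using Definition \ref{def_sigma_f} and the linearity (together with continuity) of $\Phi$,
\begin{align*}
\Re\,\Phi(A\sigma_f B)=\Re\,\Phi\!\left(\int_0^1 A!_t B\;d\nu_f(t)\right)=\int_0^1 \Re\,\Phi(A!_t B)\;d\nu_f(t).
\end{align*}
The interchange of $\Phi$ with the integral is routine since $\nu_f$ is a probability measure and the integrand is norm-continuous in $t$ on the compact interval $[0,1]$ (the matrices $A,B$ being accretive guarantees $A!_tB$ is well defined throughout, as noted in the Remark), and $\Re(\cdot)$ is linear.

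Next I would estimate the integrand pointwise in $t$. Since $W(A),W(B)\subset S_\alpha$, I first apply Lemma \ref{lemma_reverse_real_!t} to get $\Re(A!_t B)\le \sec^2(\alpha)\,(\Re A)!_t(\Re B)$; applying the positive linear map $\Phi$ (which preserves the matrix order and commutes with $\Re$ via \eqref{real_phi_real}) yields
\begin{align*}
\Re\,\Phi(A!_t B)=\Phi(\Re(A!_t B))\le \sec^2(\alpha)\,\Phi\big((\Re A)!_t(\Re B)\big).
\end{align*}
Now $\Re A$ and $\Re B$ are genuine positive definite matrices, so the classical Ando inequality \eqref{andos_inequality_intro} for the harmonic mean $!_t$ gives $\Phi\big((\Re A)!_t(\Re B)\big)\le \Phi(\Re A)!_t\Phi(\Re B)=\Re\,\Phi(A)\,!_t\,\Re\,\Phi(B)$, again using \eqref{real_phi_real}. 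Combining, $\Re\,\Phi(A!_tB)\le \sec^2(\alpha)\,\big(\Re\,\Phi(A)\,!_t\,\Re\,\Phi(B)\big)$ for every $t\in[0,1]$.

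Finally I would integrate this pointwise bound against $d\nu_f(t)$, using monotonicity of the integral and the fact that $\Phi(A),\Phi(B)$ are accretive (since $A,B$ are, and positive linear maps send accretive matrices to accretive matrices — this follows from preservation of the numerical range being in the right half-plane), so that $\Phi(A)\sigma_f\Phi(B)=\int_0^1 \Phi(A)!_t\Phi(B)\,d\nu_f(t)$ is meaningful:
\begin{align*}
\Re\,\Phi(A\sigma_f B)\le \sec^2(\alpha)\int_0^1 \Re\,\Phi(A)\,!_t\,\Re\,\Phi(B)\;d\nu_f(t)=\sec^2(\alpha)\,\Re\big(\Phi(A)\sigma_f\Phi(B)\big),
\end{align*}
where the last equality uses Proposition \ref{r_a_sigma_b>}'s computation $\Re\big(\Phi(A)\sigma_f\Phi(B)\big)\ge \Re\Phi(A)\,\sigma_f\,\Re\Phi(B)$ — wait, that is only an inequality, so instead I would note directly that $\Re\big(\Phi(A)\sigma_f\Phi(B)\big)=\int_0^1\Re\big(\Phi(A)!_t\Phi(B)\big)d\nu_f(t)\ge \int_0^1 \Re\Phi(A)!_t\Re\Phi(B)\,d\nu_f(t)$ by Lemma \ref{2}, which is the direction we need since our bound sits above the right-hand integral. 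This finishes the argument. The only genuinely delicate point is the justification that $\Phi$ commutes with the $[0,1]$-integral and with $\Re$; everything else is a concatenation of the already-established Lemma \ref{lemma_reverse_real_!t}, classical Ando for $!_t$, and Lemma \ref{2}. I do not expect a serious obstacle, but I would be careful that $\Phi$ need not be unital here (the statement only assumes positive linear), which is fine because Ando's inequality for means does not require unitality.
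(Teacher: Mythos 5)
Your proof is correct and takes essentially the same route as the paper: the paper's proof chains \eqref{real_phi_real}, Proposition \ref{prop_real_a_sigma_b_less}, Ando's inequality \eqref{andos_inequality_intro} applied to the positive matrices $\Re A,\Re B$, and Proposition \ref{r_a_sigma_b>}, which is precisely your argument with the first and last steps packaged as propositions rather than unfolded into $!_t$-integrals. The only cosmetic difference is that you integrate the pointwise harmonic-mean bounds against $d\nu_f$ yourself instead of citing those two propositions directly.
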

\begin{proof}
We have
\begin{eqnarray*}
\displaystyle
\cos^{2}(\alpha)\;\Re \Phi(A\sigma_f B)&=&\Phi(\cos^2(\alpha)\;\Re(A\sigma_fB))\hspace{0.4cm}({\text{by}}\;\eqref{real_phi_real})\\
&\leq& \Phi(\Re A\sigma_f\Re B) \hspace{0.9cm}({\text{by Proposition}}\;\ref{prop_real_a_sigma_b_less})\\
&\leq&\Phi(\Re A)\sigma_f\Phi(\Re B) \hspace{0.4cm}({\text{by}}\;\eqref{andos_inequality_intro})\\
&=&\Re \Phi(A)\sigma_f\Re \Phi(B) \hspace{0.4cm}({\text{by}}\;\eqref{real_phi_real})\\
&\leq& \Re(\Phi(A)\sigma_f \Phi(B)) \hspace{0.4cm}({\text{by Proposition}}\;\ref{r_a_sigma_b>}),
\end{eqnarray*}
which completes the proof.
\end{proof}
As an application of Theorem \ref{thm_Phicos}, we present the following accretive version of Lemma \ref{<sigma_f>}.
 \begin{corollary}\label{theorem_sigma_inner}
Let $ A, B\in\mathcal{M}_n  $ be accretive matrices such that $W(A), W(B)\subset S_{\alpha}$ for some $0\leq \alpha<\frac{\pi}{2}$. If $f\in \mathfrak{m}$, then for any vector $x,$ we have 
\begin{equation}
\Re\left\langle (A\sigma_f B)x,x\right\rangle \leq \sec^2(\alpha)\Re\left(\left\langle Ax,x\right\rangle \sigma_f\left\langle Bx,x\right\rangle\right) .
\end{equation}
\end{corollary}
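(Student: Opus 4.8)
The plan is to deduce Corollary \ref{theorem_sigma_inner} from Theorem \ref{thm_Phicos} by choosing a suitable positive linear map $\Phi$. For a fixed unit vector $x\in\mathbb{C}^n$, consider the map $\Phi:\mathcal{M}_n\to\mathcal{M}_1\cong\mathbb{C}$ defined by $\Phi(X)=\langle Xx,x\rangle$. This is clearly linear, and it is positive since $X\geq 0$ implies $\langle Xx,x\rangle\geq 0$; it need not be unital, but unitality is not required in Theorem \ref{thm_Phicos}. (If $x$ is not a unit vector, $\Phi(X)=\langle Xx,x\rangle$ is still positive and linear, so the statement is fine as stated for an arbitrary vector $x$.) One checks that $\Re\Phi(X)=\Phi(\Re X)=\langle(\Re X)x,x\rangle$, consistent with \eqref{real_phi_real}.

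First I would verify that the hypotheses of Theorem \ref{thm_Phicos} are met: $A,B$ are accretive with $W(A),W(B)\subset S_\alpha$, and $\Phi$ as above is a positive linear map. Then I would apply \eqref{Phicos} directly to obtain
\begin{equation*}
\Re\Phi(A\sigma_f B)\leq\sec^2(\alpha)\,\Re\bigl(\Phi(A)\sigma_f\Phi(B)\bigr).
\end{equation*}
Next I would translate each side back in terms of inner products: the left-hand side is $\Re\langle(A\sigma_f B)x,x\rangle$, and on the right-hand side $\Phi(A)=\langle Ax,x\rangle$ and $\Phi(B)=\langle Bx,x\rangle$ are scalars (complex numbers with positive real part, hence accretive $1\times 1$ matrices), so $\Phi(A)\sigma_f\Phi(B)=\langle Ax,x\rangle\,\sigma_f\,\langle Bx,x\rangle$ is the scalar mean. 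This yields exactly
\begin{equation*}
\Re\langle(A\sigma_f B)x,x\rangle\leq\sec^2(\alpha)\,\Re\bigl(\langle Ax,x\rangle\,\sigma_f\,\langle Bx,x\rangle\bigr),
\end{equation*}
which is the claim.

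The only point requiring a little care — and the nearest thing to an obstacle — is confirming that the scalar $\sigma_f$ appearing on the right is genuinely the restriction of the matrix mean $\sigma_f$ to $1\times 1$ accretive matrices, i.e.\ to the right half-plane; this is immediate from Definition \ref{def_sigma_f} since for scalars $a,b$ with positive real part, $a!_t b=((1-t)a^{-1}+tb^{-1})^{-1}$ is well defined and $a\sigma_f b=\int_0^1 a!_t b\,d\nu_f(t)$, so no separate argument is needed. Everything else is a direct substitution into Theorem \ref{thm_Phicos}, so the proof is short.
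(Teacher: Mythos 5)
Your proof is correct and is essentially identical to the paper's own argument: both apply Theorem \ref{thm_Phicos} to the positive linear map $\Phi(X)=\langle Xx,x\rangle$ and read off the inequality. The extra checks you note (non-unit $x$, the scalar interpretation of $\sigma_f$) are fine but not points the paper dwells on.
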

\begin{proof} 
 Let  $\Phi(A)=\left\langle Ax,x\right\rangle$ in Theorem \ref{thm_Phicos}. Then $\Phi$ is a  positive linear map and 
$$\cos^{2}(\alpha)\  \Re \Phi(A\sigma_f B)\leq \Re
\left( \Phi(A)\sigma_f \Phi(B)\right)\Rightarrow \cos^2(\alpha)\Re\left\langle (A\sigma_f B)x,x\right\rangle \leq \Re\left(\left\langle Ax,x\right\rangle \sigma_f\left\langle Bx,x\right\rangle\right),$$ 
which completes the proof
\end{proof}

\begin{theorem} Let $ A, B\in\mathcal{M}_n $ be accretive matrices such that $W(A), W(B)\subset S_{\alpha}$ for some $0\leq \alpha<\frac{\pi}{2}$. If $f\in \mathfrak{m}$ is such that  $f'(1)=t$ for some $t\in (0,1).$ Then, for any positive linear map $\Phi$,
\begin{equation}
\Re \Phi(A\sigma_f B)\leq\sec^{2}(\alpha)\ \Re\Phi(A\nabla_t B).\label{sigmanabla}
\end{equation}
\end{theorem}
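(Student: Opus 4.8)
The plan is to reduce everything to Theorem \ref{thm_amgmhm_accretive} together with the elementary identity \eqref{real_phi_real} and the monotonicity of positive linear maps. The point is that the hard analytic content — comparing $\Re(A\sigma_f B)$ with $\Re(A\nabla_t B)$ for sectorial $A,B$ — has already been isolated in Theorem \ref{thm_amgmhm_accretive}, so what remains is bookkeeping about how $\Phi$ interacts with the real part.

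Concretely, I would first record that $A\nabla_t B$ is accretive (indeed $\Re(A\nabla_t B)=\Re A\,\nabla_t\,\Re B>0$), so both sides of the claimed inequality are Hermitian and the statement makes sense. Then, starting from the right-hand inequality in \eqref{amgmhm_accretive},
\begin{equation*}
\Re(A\sigma_f B)\le \sec^{2}(\alpha)\,\Re(A\nabla_t B),
\end{equation*}
I would apply $\Phi$. Since $\Phi$ is positive, it is monotone on Hermitian matrices, hence
\begin{equation*}
\Phi\bigl(\Re(A\sigma_f B)\bigr)\le \sec^{2}(\alpha)\,\Phi\bigl(\Re(A\nabla_t B)\bigr).
\end{equation*}
Finally, invoking \eqref{real_phi_real} on both sides, namely $\Re\Phi(A\sigma_f B)=\Phi(\Re(A\sigma_f B))$ and $\Re\Phi(A\nabla_t B)=\Phi(\Re(A\nabla_t B))$, yields exactly \eqref{sigmanabla}.

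There is essentially no obstacle here beyond making sure the hypotheses of Theorem \ref{thm_amgmhm_accretive} are met — namely $W(A),W(B)\subset S_\alpha$ and $f'(1)=t\in(0,1)$ — which are precisely the hypotheses assumed in the present statement. One subtlety worth a sentence is that the use of \eqref{real_phi_real} requires no unitality of $\Phi$; it holds for any positive linear map because positive linear maps are $*$-linear, so $\Re$ commutes with $\Phi$. Thus the result holds for an arbitrary positive linear map $\Phi$, consistent with the remark after \eqref{chois_inequality_intro} that unitality is not needed for Ando-type inequalities.
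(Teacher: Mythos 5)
Your proof is correct. It differs from the paper's in one respect worth noting: the paper does not pass through Theorem \ref{thm_amgmhm_accretive} as a black box. Instead it starts from $\cos^{2}(\alpha)\Phi(\Re(A\sigma_f B))\leq \Phi(\Re A\,\sigma_f\,\Re B)$ (Proposition \ref{prop_real_a_sigma_b_less} plus monotonicity of $\Phi$), then invokes Ando's inequality \eqref{andos_inequality_intro} to push $\Phi$ inside the mean, obtaining $\Phi(\Re A)\,\sigma_f\,\Phi(\Re B)$, and only then applies Lemma \ref{AM-GM-HM} to the positive matrices $\Phi(\Re A),\Phi(\Re B)$ before using linearity of $\Phi$ and $\nabla_t$. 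You instead perform the comparison $\sigma_f\leq\nabla_t$ on $\Re A,\Re B$ \emph{before} applying $\Phi$, which is exactly the content of the right-hand inequality of \eqref{amgmhm_accretive}; since the arithmetic mean is linear, $\Phi$ then commutes with it for free and Ando's inequality is never needed. The two arguments use the same underlying ingredients (Proposition \ref{prop_real_a_sigma_b_less}, Lemma \ref{AM-GM-HM}, the identity \eqref{real_phi_real}, and order-preservation of $\Phi$), but your ordering of the steps yields a shorter derivation that factors cleanly through an already-proved theorem and drops one lemma from the dependency list. Your side remarks are also accurate: positivity of $\Phi$ suffices for \eqref{real_phi_real} because positive linear maps are $*$-preserving, and unitality is indeed not required here.
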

\begin{proof} We have
\begin{align*}
\cos^{2}(\alpha)\Phi(\Re(A\sigma_f B))&\leq\Phi(\Re A\sigma_f\Re B)\hspace{1cm}(\text{by Proposition}\; \ref{prop_real_a_sigma_b_less})\\ 
&\leq \Phi(\Re A)\sigma_f\Phi(\Re B) \\
&\leq \Phi(\Re A)\nabla_t\Phi(\Re B)\hspace{1cm}(\text{by Lemma}\;  \ref{AM-GM-HM})\\
&=\Re \left(\Phi(A)\nabla_t\Phi(B)\right)\\
&=\Re \Phi(A\nabla_t B),
\end{align*}
which completes the proof.
\end{proof}

\section{Choi-Davis inequalities for accretive matrices} 
In this section we present several inequalities involving $f(A)$, where $f\in\mathfrak{m}$ and $A$ is accretive. 

 For completeness of the proof, it is important to recall that a function $f\in \mathfrak{m}$ can be analytically continued to $\mathbb{C}\backslash (-\infty,0]$. This means that $f(A)$ can be defined similarly for any  $A$ whose spectrum is disjoint from $(-\infty,0]$, by Theorem \ref{thm_main_1}.

Our first result in this direction is the following relation between $f(\Re A)$ and $\Re(f(A)).$
\begin{proposition}\label{prop_f_r_f} Let $ f\in \mathfrak{m} $ and $ A\in\mathcal{M}_n $ be accretive. Then  
\begin{equation}
\Re (f(A))\geq\ f(\Re A).
\end{equation}
Consequently, if $A$ is accretive, then so is $f(A)$.
\end{proposition}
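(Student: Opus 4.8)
The plan is to express $f(A)$ via the harmonic-mean integral representation of Theorem~\ref{thm_main_1}, which applies since $A$ accretive implies $\lambda(A)\cap(-\infty,0]=\phi$. Thus $f(A)=\int_0^1 \mathcal{I}!_t A\,d\nu_f(t)$, and therefore $\Re(f(A))=\int_0^1 \Re(\mathcal{I}!_t A)\,d\nu_f(t)$, using that $\Re$ is real-linear and commutes with integration against the probability measure $\nu_f$. The point is to push $\Re$ inside each term $\mathcal{I}!_t A = (\,(1-t)\mathcal{I}+tA^{-1}\,)^{-1}$ and compare with $\mathcal{I}!_t(\Re A)$.

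First I would apply Lemma~\ref{2} with the two accretive matrices $\mathcal{I}$ and $A$: since $\Re \mathcal{I}=\mathcal{I}$, we get $\Re(\mathcal{I}!_t A)\geq \mathcal{I}!_t(\Re A)$ for every $t\in(0,1)$, and the endpoints $t=0,1$ hold trivially (both sides equal $\mathcal{I}$, resp. $\Re A$). Integrating this pointwise inequality against $d\nu_f(t)$ preserves the order, so
\begin{align*}
\Re(f(A))=\int_0^1 \Re(\mathcal{I}!_t A)\,d\nu_f(t)\geq \int_0^1 \mathcal{I}!_t(\Re A)\,d\nu_f(t).
\end{align*}
Now $\Re A$ is positive definite, so by the identity \eqref{mean_har_prob_intro} (equivalently by Theorem~\ref{thm_main_1} applied to the positive matrix $\Re A$, or just by the scalar/positive functional calculus) the right-hand integral equals $f(\Re A)$. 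This yields $\Re(f(A))\geq f(\Re A)$, as desired.

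For the consequence: since $A$ is accretive, $\Re A>0$, and $f$ maps $(0,\infty)$ into $(0,\infty)$, so $f(\Re A)>0$; hence $\Re(f(A))\geq f(\Re A)>0$, meaning $f(A)$ is accretive. I expect the only delicate point to be the justification that $\Re$ commutes with the integral $\int_0^1 (\cdot)\,d\nu_f(t)$ and that the integrand is integrable — but this is already implicit in the well-definedness of the integral in Theorem~\ref{thm_main_1} (the integrand $t\mapsto \mathcal{I}!_t A$ is continuous on $[0,1]$, hence bounded, hence Bochner integrable against the probability measure $\nu_f$), so there is no real obstacle; the argument is essentially a one-line reduction to Lemma~\ref{2}.
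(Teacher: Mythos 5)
Your proof is correct and follows exactly the paper's argument: represent $f(A)=\int_0^1 \mathcal{I}!_t A\,d\nu_f(t)$ via Theorem \ref{thm_main_1}, apply Lemma \ref{2} to the pair $\mathcal{I},A$ under the integral sign, and recognize the resulting integral as $f(\Re A)$. The extra remarks you make on integrability and on the accretivity of $f(A)$ are sound but not needed beyond what the paper states.
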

\begin{proof}
We easily notice that
\begin{equation}
\displaystyle
\Re f(A)=\int_{0}^{1}\Re({\mathcal{I}}!_{t} A)\;
d\nu_f(t)\geq \int_{0}^{1}\mathcal{I}!_{t}(\Re A) \;d\nu_f(t)=f(\Re A),
\end{equation}
where we have used Lemma \ref{2} to obtain the first inequality.
\end{proof}
On the other hand, a reversed version of Proposition \ref{prop_f_r_f} can be found for sectorial matrices, as follows.
\begin{proposition}\label{prop_f_real_sec_f} Let $ f\in \mathfrak{m}$ and $ A\in\mathcal{M}_n $ be accretive such that $ W(A)\subset S_{\alpha}, $ for some $0\leq \alpha<\frac{\pi}{2}$. Then 
\begin{equation}
\Re (f(A))\leq\ \sec^{2}(\alpha) \;f(\Re A)
\end{equation}
\end{proposition}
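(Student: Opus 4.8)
The plan is to mirror the proof of Proposition \ref{prop_f_r_f}, replacing the use of Lemma \ref{2} by its sectorial reverse, Lemma \ref{lemma_reverse_real_!t}. The starting point is the integral representation from Theorem \ref{thm_main_1}: since $A$ is accretive, its eigenvalues avoid $(-\infty,0]$, so $f(A)=\int_0^1 \mathcal{I}!_t A\, d\nu_f(t)$, where $\nu_f$ is the probability measure attached to $f$ by Lemma \ref{hansen_lem}.

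First I would take real parts inside the integral, using linearity of $\Re$ and the fact that $\Re$ commutes with Bochner-type integration, to write $\Re(f(A))=\int_0^1 \Re(\mathcal{I}!_t A)\, d\nu_f(t)$. Next, for each fixed $t\in[0,1]$ I would apply Lemma \ref{lemma_reverse_real_!t} with the two accretive matrices taken to be $\mathcal{I}$ and $A$: since $W(\mathcal{I})=\{1\}\subset S_\alpha$ trivially and $W(A)\subset S_\alpha$ by hypothesis, the lemma gives $\Re(\mathcal{I}!_t A)\leq \sec^2(\alpha)\,(\Re \mathcal{I})!_t(\Re A)=\sec^2(\alpha)\,\mathcal{I}!_t(\Re A)$. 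Integrating this inequality against the probability measure $\nu_f$ — and pulling the constant $\sec^2(\alpha)$ out — yields $\Re(f(A))\leq \sec^2(\alpha)\int_0^1 \mathcal{I}!_t(\Re A)\, d\nu_f(t)$. Finally, since $\Re A>0$ is positive definite, Lemma \ref{hansen_lem} (or equivalently \eqref{mean_har_prob_intro} applied with the scalar-type representation) identifies $\int_0^1 \mathcal{I}!_t(\Re A)\, d\nu_f(t)=f(\Re A)$, which completes the argument.

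I do not anticipate a serious obstacle here; the proof is essentially a one-line computation once Theorem \ref{thm_main_1} and Lemma \ref{lemma_reverse_real_!t} are in hand. The only point requiring a moment's care is the justification that the endpoint cases $t=0$ and $t=1$ (where $\mathcal{I}!_t A$ degenerates to $\mathcal{I}$ or to $A$) cause no trouble — but $\nu_f$ is a finite measure on the compact interval $[0,1]$ and the integrand is continuous there, so interchanging $\Re$ with the integral and monotonicity of the integral both go through without incident. One could also phrase the whole thing spectrally for $A$ diagonalizable and then invoke the density/continuity argument already used in Theorem \ref{thm_main_1}, but the integral-representation route is cleaner and avoids repeating that approximation.

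\begin{proof}
By Theorem \ref{thm_main_1}, since $A$ is accretive its eigenvalues avoid $(-\infty,0]$, so
\begin{equation*}
\Re(f(A))=\int_{0}^{1}\Re(\mathcal{I}!_{t}A)\;d\nu_f(t).
\end{equation*}
For each $t\in[0,1]$, the matrices $\mathcal{I}$ and $A$ are accretive with $W(\mathcal{I}),W(A)\subset S_{\alpha}$, so Lemma \ref{lemma_reverse_real_!t} gives
\begin{equation*}
\Re(\mathcal{I}!_{t}A)\leq \sec^{2}(\alpha)\,(\Re \mathcal{I})!_{t}(\Re A)=\sec^{2}(\alpha)\,\mathcal{I}!_{t}(\Re A).
\end{equation*}
Integrating against the probability measure $\nu_f$ and using Lemma \ref{hansen_lem} applied to $\Re A>0$,
\begin{equation*}
\Re(f(A))\leq \sec^{2}(\alpha)\int_{0}^{1}\mathcal{I}!_{t}(\Re A)\;d\nu_f(t)=\sec^{2}(\alpha)\,f(\Re A),
\end{equation*}
which completes the proof.
\end{proof}
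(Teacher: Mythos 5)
Your proof is correct and follows essentially the same route as the paper: both take the integral representation $\Re f(A)=\int_0^1\Re(\mathcal{I}!_tA)\,d\nu_f(t)$ and apply Lemma \ref{lemma_reverse_real_!t} termwise to $\mathcal{I}$ and $A$ before reassembling $f(\Re A)$. Your remarks on the endpoints $t=0,1$ and the interchange of $\Re$ with the integral are harmless extra care that the paper omits.
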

\begin{proof}
Using Lemma \ref{lemma_reverse_real_!t}, we easily obtain
\begin{align*}
\Re f(A)=\int_{0}^{1}\Re(\mathcal{I}!_{t} A) \;d\nu_f(t)\leq \sec^{2}(\alpha)\;\int_{0}^{1}\mathcal{I}!_{t}(\Re A)\;d\nu_f(t)\leq \sec^{2}(\alpha)\;  f(\Re A),
\end{align*}
which completes the proof.
\end{proof}

Now we are ready to present the first Choi-Davis inequality for accretive matrices extending \eqref{chois_inequality_intro}. 
\begin{theorem}\label{choi_davis_accretive}
Let $ f\in \mathfrak{m} $ , $ \Phi $ be a unital  positive linear map  and $ A\in\mathcal{M}_n $ be an accretive matrix, with $W(A)\subset S_{\alpha}, 0\leq \alpha<\frac{\pi}{2}$. Then the following versions of the Choi-Davis inequality hold
$$\Re f(\Phi(A))\geq\cos^2(\alpha)\;\Re\Phi(f(A)).$$ 
\end{theorem}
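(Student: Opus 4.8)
The plan is to chain together the two ``sectorial sandwich'' estimates for $\Re f(\cdot)$ that are already available (Propositions \ref{prop_f_r_f} and \ref{prop_f_real_sec_f}) with Choi's inequality \eqref{chois_inequality_intro} applied to the \emph{positive definite} matrix $\Re A$, using throughout the identity $\Re\Phi(X)=\Phi(\Re X)$ from \eqref{real_phi_real} together with the order preservation of the positive linear map $\Phi$. Note that the heavy lifting has been done in Theorem \ref{thm_main_1}, which gives the integral formula $f(A)=\int_0^1\mathcal{I}!_tA\,d\nu_f(t)$ for any matrix with spectrum off $(-\infty,0]$, and hence makes $f(A)$ and the estimates for $\Re f(A)$ meaningful for merely accretive $A$.

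First I would observe that $\Phi(A)$ is itself accretive: since $\Re A>0$ there is $m>0$ with $\Re A\ge m\mathcal{I}$, so $\Re\Phi(A)=\Phi(\Re A)\ge m\Phi(\mathcal{I})=m\mathcal{I}>0$; in particular $f(\Phi(\Re A))$ and $f(\Re\Phi(A))$ make sense and coincide. Next I would apply the reverse sectorial estimate (Proposition \ref{prop_f_real_sec_f}) to $A$, getting $\Re f(A)\le \sec^{2}(\alpha)\,f(\Re A)$, and push this Hermitian inequality through $\Phi$ to obtain $\Re\Phi(f(A))=\Phi(\Re f(A))\le \sec^{2}(\alpha)\,\Phi(f(\Re A))$. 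Then I would invoke Choi's inequality \eqref{chois_inequality_intro} for the positive definite matrix $\Re A$ and the matrix monotone $f$, namely $\Phi(f(\Re A))\le f(\Phi(\Re A))=f(\Re\Phi(A))$, which upgrades the previous line to $\Re\Phi(f(A))\le \sec^{2}(\alpha)\,f(\Re\Phi(A))$. Finally, since $\Phi(A)$ is accretive, Proposition \ref{prop_f_r_f} gives $f(\Re\Phi(A))\le \Re f(\Phi(A))$; combining yields $\Re\Phi(f(A))\le \sec^{2}(\alpha)\,\Re f(\Phi(A))$, and multiplying by $\cos^{2}(\alpha)$ produces the claimed inequality.

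I do not expect a genuine obstacle here, since the content has been front-loaded into Theorem \ref{thm_main_1} and Propositions \ref{prop_f_r_f} and \ref{prop_f_real_sec_f}. The one place that needs a word of care is that $\Re A$ is \emph{positive definite} (not merely semidefinite), so that Choi's inequality and the ordinary functional calculus for $f:(0,\infty)\to(0,\infty)$ genuinely apply to $\Re A$ and to $\Phi(\Re A)$; the unitality of $\Phi$ is used precisely at the Choi step and nowhere else. As a sanity check, taking $\alpha=0$ collapses the chain to $\Re\Phi(f(A))\le\Re f(\Phi(A))$ for accretive $A$, and further specializing to $A>0$ recovers the classical Choi--Davis inequality $\Phi(f(A))\le f(\Phi(A))$.
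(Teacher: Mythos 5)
Your proof is correct and follows essentially the same route as the paper: the same three ingredients (Proposition \ref{prop_f_real_sec_f} applied to $A$, Choi's inequality \eqref{chois_inequality_intro} applied to the positive definite matrix $\Re A$, and Proposition \ref{prop_f_r_f} applied to $\Phi(A)$, linked by $\Re\Phi=\Phi\Re$) appear in the paper's chain, merely written in the opposite direction and with the first lower bound unfolded there via the integral representation of Theorem \ref{thm_main_1} rather than cited as Proposition \ref{prop_f_r_f}. Your explicit check that $\Phi(A)$ is accretive is a detail the paper leaves implicit, but it does not change the argument.
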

\begin{proof}
 Let $A$ be an accretive matrix. Notice that 
\begin{align*}
\Re f(\Phi(A))&=\Re \int_{0}^{1}\mathcal{I}!_t\Phi(A)\;d\nu_f(t)\hspace{0.4cm}({\text{by Theorem}}\;\ref{thm_main_1})\\
&\geq \int_{0}^{1}\mathcal{I}!_t\Re(\Phi(A))\;d\nu_f(t)\hspace{0.4cm} ({\text{by Lemma}}\;\ref{2})\\
&=\int_{0}^{1}\mathcal{I}!_t\Phi(\Re A)\;d\nu_f(t)\hspace{0.4cm} ({\text{by}}\;\eqref{real_phi_real})\\
&=f(\Phi(\Re A)) \hspace{1.5cm}({\text{by Theorem}}\;\ref{thm_main_1})\\
&\geq \Phi(f(\Re A))\hspace{2cm} ({\text{by}}\;\eqref{chois_inequality_intro})\\
&\geq \cos^2(\alpha)\;\Re\Phi(f(A)),\hspace{0.4cm} ({\text{by Proposition}}\;\ref{prop_f_real_sec_f})\\
\end{align*}
which completes the proof.
\end{proof}

As an application of Theorem \ref{choi_davis_accretive}, we present the following accretive version of Lemma \ref{f<>}.

\begin{corollary}\label{theorem_inner_product} Let $A\in\mathcal{M}_n$ be an accretive matrix and $f\in\mathfrak{m}$ such that $W(A)\subset S_\alpha$. Then for any unit vector $x\in\mathbb{C}^n,$
\begin{align}
\Re\left\langle f(A)x,x\right\rangle \leq \sec^2(\alpha) \Re f\left( \left\langle Ax,x\right\rangle \right) .
\end{align}

\end{corollary}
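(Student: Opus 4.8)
The plan is to deduce this corollary from Theorem \ref{choi_davis_accretive} exactly as Corollary \ref{theorem_sigma_inner} was deduced from Theorem \ref{thm_Phicos}: by choosing a suitable positive linear map $\Phi$ built from the vector $x$. Concretely, for a fixed unit vector $x\in\mathbb{C}^n$, I would define $\Phi:\mathcal{M}_n\to\mathcal{M}_1\cong\mathbb{C}$ by $\Phi(X)=\left\langle Xx,x\right\rangle$. Since $x$ is a unit vector, $\Phi$ is unital, and it is plainly linear and positive (if $X\geq 0$ then $\left\langle Xx,x\right\rangle\geq 0$). Thus $\Phi$ satisfies the hypotheses of Theorem \ref{choi_davis_accretive}.

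Next, I would simply substitute this $\Phi$ into the conclusion of Theorem \ref{choi_davis_accretive}, namely $\Re f(\Phi(A))\geq\cos^2(\alpha)\,\Re\Phi(f(A))$. With the above choice, $\Phi(A)=\left\langle Ax,x\right\rangle$ is a scalar, so $f(\Phi(A))=f\left(\left\langle Ax,x\right\rangle\right)$ (here $f$ is the analytic continuation of $f$ to $\mathbb{C}\backslash(-\infty,0]$, and since $A$ is accretive, $\left\langle Ax,x\right\rangle$ lies in the right half-plane, hence off $(-\infty,0]$). Likewise $\Phi(f(A))=\left\langle f(A)x,x\right\rangle$. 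Hence the inequality reads
\begin{equation*}
\Re f\left(\left\langle Ax,x\right\rangle\right)\geq\cos^2(\alpha)\,\Re\left\langle f(A)x,x\right\rangle,
\end{equation*}
and dividing by $\cos^2(\alpha)>0$ (legitimate since $0\leq\alpha<\frac\pi2$) gives $\Re\left\langle f(A)x,x\right\rangle\leq\sec^2(\alpha)\,\Re f\left(\left\langle Ax,x\right\rangle\right)$, which is precisely the claimed inequality.

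I do not anticipate a genuine obstacle here; the only point requiring a word of care is the identification of $f$ evaluated on the scalar $\Phi(A)$ with the functional-calculus $f(\Phi(A))$ appearing in Theorem \ref{choi_davis_accretive}. Since $\mathcal{M}_1$ is just $\mathbb{C}$ and the Dunford integral \eqref{dunford_intro} reduces to the ordinary Cauchy integral formula for the scalar $\left\langle Ax,x\right\rangle$, this identification is immediate. One should also note that Theorem \ref{choi_davis_accretive} is stated for $\Phi:\mathcal{M}_n\to\mathcal{M}_r$; taking $r=1$ is permitted, and $W(A)\subset S_\alpha$ guarantees $\left\langle Ax,x\right\rangle\in S_\alpha$ as well, so all hypotheses are genuinely met. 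Thus the proof is a short one-line specialization.
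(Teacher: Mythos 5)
Your proposal is correct and follows exactly the paper's own argument: specialize Theorem \ref{choi_davis_accretive} to the unital positive linear map $\Phi(X)=\left\langle Xx,x\right\rangle$ and divide by $\cos^{2}(\alpha)$. The extra remarks on identifying the scalar functional calculus and on $\left\langle Ax,x\right\rangle\in S_{\alpha}$ are a welcome bit of additional care, but the route is the same as in the paper.
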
 
\begin{proof}  Letting  $\Phi(A)=\left\langle Ax,x\right\rangle$ in Theorem \ref{choi_davis_accretive}, $\Phi$ is a unital positive linear map. Then we have 
$$\cos^2(\alpha)\;\Re\Phi(f(A))\leq\Re f(\Phi(A))\Rightarrow \Re\left\langle f(A)x,x\right\rangle \leq\sec^2(\alpha)\Re f\left(\left\langle Ax,x\right\rangle\right),$$ 
which completes the proof.
\end{proof}

Recall that when $f\in\mathfrak{m}$ and $A, B\in\mathcal{M}_n^+$, then 
\begin{align}
f(A\nabla_t B)\geq f(A)\nabla_t f(B), 0<t<1.\label{concave_t}
\end{align}

Next, we present the sectorial version of \eqref{concave_t}. 
\begin{theorem} \label{thm_concave_t} Let $ A, B\in\mathcal{M}_n $ be accretive matrices such that $W(A), W(B)\subset S_{\alpha}$ for some $0\leq \alpha<\frac{\pi}{2}$. Then for any $f\in \mathfrak{m}$ and $0<t<1,$
\begin{align}
\Re(f(A)\nabla_t f(B))\leq \sec^2(\alpha) \Re f(A\nabla_t B) .\label{f_nabla}
\end{align}

\end{theorem}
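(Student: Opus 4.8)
The plan is to mimic the proof of the corresponding scalar-to-sectorial statements that already appear in this section (compare Propositions~\ref{prop_f_r_f} and~\ref{prop_f_real_sec_f}), by sandwiching the sectorial quantities between their real parts. First I would record the key facts I am allowed to use: the inequality \eqref{concave_t} for positive matrices, the lower bound $\Re(f(A))\geq f(\Re A)$ from Proposition~\ref{prop_f_r_f} (valid since $A,B$ are accretive), the upper bound $\Re(f(A))\leq \sec^2(\alpha)f(\Re A)$ from Proposition~\ref{prop_f_real_sec_f} (valid since $W(A),W(B)\subset S_\alpha$), and the elementary identity $\Re(X\nabla_t Y)=\Re X\nabla_t\Re Y$, which holds because $\nabla_t$ is an affine combination and $\Re$ is linear. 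I should also note that $S_\alpha$ is closed under the relevant operations and that $A\nabla_t B$ is again accretive (indeed $\Re(A\nabla_t B)=\Re A\nabla_t\Re B>0$), so that $f(A\nabla_t B)$ makes sense via Theorem~\ref{thm_main_1}.

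The central chain of inequalities I would write is the following. Start from $f(A)\nabla_t f(B)$ and pass to real parts:
\begin{align*}
\Re\bigl(f(A)\nabla_t f(B)\bigr)&=\Re f(A)\,\nabla_t\,\Re f(B)\\
&\leq \sec^2(\alpha)\,f(\Re A)\,\nabla_t\,\sec^2(\alpha)\,f(\Re B)\qquad(\text{by Proposition}\ \ref{prop_f_real_sec_f})\\
&=\sec^2(\alpha)\bigl(f(\Re A)\nabla_t f(\Re B)\bigr)\\
&\leq \sec^2(\alpha)\,f(\Re A\,\nabla_t\,\Re B)\qquad(\text{by}\ \eqref{concave_t})\\
&=\sec^2(\alpha)\,f\bigl(\Re(A\nabla_t B)\bigr)\\
&\leq \sec^2(\alpha)\,\Re f(A\nabla_t B)\qquad(\text{by Proposition}\ \ref{prop_f_r_f}),
\end{align*}
which is exactly \eqref{f_nabla}. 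The step using Proposition~\ref{prop_f_real_sec_f} requires $W(A),W(B)\subset S_\alpha$; the step using \eqref{concave_t} requires $\Re A,\Re B>0$, which we have; and the final step uses that $A\nabla_t B$ is accretive, which we checked above. Monotonicity of $\nabla_t$ in each argument justifies the second line once the pointwise bounds on $\Re f(A)$ and $\Re f(B)$ are in hand.

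The main obstacle — and really the only thing to check carefully — is the legitimacy of the monotonicity step: I am replacing $\Re f(A)$ by the larger matrix $\sec^2(\alpha)f(\Re A)$ inside $X\mapsto X\nabla_t Y$, and simultaneously replacing $\Re f(B)$ by $\sec^2(\alpha)f(\Re B)$; this is fine because $A\nabla_t B$ is jointly monotone (it is affine with nonnegative coefficients $1-t$ and $t$), so $X_1\leq X_2$ and $Y_1\leq Y_2$ give $X_1\nabla_t Y_1\leq X_2\nabla_t Y_2$. Everything else is a direct citation. I would close by remarking, as is done after Theorem~\ref{thm_amgmhm_accretive}, that taking $\alpha=0$ recovers \eqref{concave_t} for positive matrices as a special case.
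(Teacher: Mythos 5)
Your proof is correct and follows essentially the same route as the paper's: the paper writes the identical chain starting from $\Re f(A\nabla_t B)$ and working downward via Proposition \ref{prop_f_r_f}, matrix concavity (Proposition \ref{oper_intro_prop}, equivalently \eqref{concave_t}), and Proposition \ref{prop_f_real_sec_f}, whereas you start from $\Re(f(A)\nabla_t f(B))$ and work upward using the same three ingredients in reverse order. The extra care you take over the monotonicity of $\nabla_t$ and the accretivity of $A\nabla_t B$ is sound but does not change the substance of the argument.
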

\begin{proof} We have
\begin{align*}
\Re f(A\nabla_t B)&=\Re f((1-t)A+tB)\\
&\geq f((1-t)\Re A+t\Re B)\hspace{5cm}(\text{by Propostion}\; \ref{prop_f_r_f})\\
&\geq(1-t)f(\Re A)+tf(\Re B)  \hspace{5cm}(\text{by Propostion}\; \ref{oper_intro_prop}) \\
&\geq(1-t)\cos^2(\alpha)\Re f(A)+t\cos^2(\alpha)\Re f(B)\hspace{1.7cm}(\text{by Propostion}\;\ref{prop_f_real_sec_f})\\
&=\cos^2(\alpha)\Re\left((1-t)f(A)+tf(B)\right),
\end{align*}
hence
$$\Re f(A\nabla_t B)\geq \cos^2(\alpha)\Re(f(A)\nabla_t f(B)).$$
\end{proof}

 Now we present the sectorial version of Lemma \ref{lemma_ando_hiai}.
\begin{theorem}\label{theorem_sharp_nabla} Let $ A, B\in\mathcal{M}_n $ be accretive matrices such that $W(A),W(B)\subset S_{\alpha}$ and let $ f\in \mathfrak{m} $. Then
\begin{equation}
\Re(f(A)\sharp f(B)) \leq\sec^{4}(\alpha)\Re(f(A\nabla B)).
\end{equation}
\end{theorem}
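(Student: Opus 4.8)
The plan is to combine the real-part/sectorial comparisons already established in this section with the positive-matrix inequality of Lemma \ref{lemma_ando_hiai}. The strategy mirrors the proof of Theorem \ref{thm_concave_t}: push everything down to real parts, use the known positive-matrix result, then pay the price of $\sec^2(\alpha)$ factors each time we pass between $\Re(f(\cdot))$ and $f(\Re(\cdot))$ or between $\Re(A\sharp B)$ and $\Re A\sharp \Re B$.

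First I would start from $\Re(f(A)\sharp f(B))$. By Proposition \ref{prop_f_r_f}, $f(A)$ and $f(B)$ are accretive, so Lemma \ref{lemma_sharp>} applies and gives $\Re(f(A)\sharp f(B))\le$ something only after reversing; more precisely I want an upper bound, so I would instead invoke Lemma \ref{lemma_sharpsec} (the sectorial reverse), which requires $W(f(A)),W(f(B))\subset S_\beta$ for a suitable $\beta$. Here a subtlety arises: $f$ does not preserve the sector $S_\alpha$ in general, so I must first record that $W(f(A))\subset S_\alpha$ whenever $W(A)\subset S_\alpha$ for $f\in\mathfrak m$. This follows because $f(A)=\int_0^1 \mathcal I!_t A\, d\nu_f(t)$ (Theorem \ref{thm_main_1}), each $\mathcal I!_t A$ has numerical range in $S_\alpha$ since $S_\alpha$ is invariant under inversion and under convex combinations with the identity (cf. \cite[Proposition 3.2]{sab-lama}), and $S_\alpha$ is a convex cone, hence stable under the averaging integral. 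With this in hand, Lemma \ref{lemma_sharpsec} gives
\begin{align*}
\Re(f(A)\sharp f(B))\le \sec^2(\alpha)\,\big(\Re f(A)\sharp \Re f(B)\big).
\end{align*}

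Next I would bound $\Re f(A)\sharp \Re f(B)$ from above. By Proposition \ref{prop_f_real_sec_f}, $\Re f(A)\le \sec^2(\alpha) f(\Re A)$ and similarly for $B$; since the geometric mean $\sharp$ is monotone and positively homogeneous on $\mathcal M_n^+$, this yields $\Re f(A)\sharp\Re f(B)\le \sec^2(\alpha)\, f(\Re A)\sharp f(\Re B)$. Now apply Lemma \ref{lemma_ando_hiai} to the positive matrices $\Re A,\Re B$: $f(\Re A)\sharp f(\Re B)\le f(\Re A\nabla \Re B)=f(\Re(A\nabla B))$. Finally, Proposition \ref{prop_f_r_f} gives $f(\Re(A\nabla B))\le \Re f(A\nabla B)$. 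Chaining the four inequalities produces the factor $\sec^2(\alpha)\cdot\sec^2(\alpha)=\sec^4(\alpha)$ in front of $\Re f(A\nabla B)$, as claimed.

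The main obstacle I anticipate is the sector-invariance claim $W(A)\subset S_\alpha\Rightarrow W(f(A))\subset S_\alpha$, which is needed to legitimately invoke Lemma \ref{lemma_sharpsec} on the pair $f(A),f(B)$; the rest is a bookkeeping chain of monotonicity and homogeneity of $\sharp$ together with the already-proved Propositions \ref{prop_f_r_f} and \ref{prop_f_real_sec_f} and Lemma \ref{lemma_ando_hiai}. One should double-check that the homogeneity step $c(X\sharp Y)=(cX)\sharp(cY)$ and the monotonicity $X\le X', Y\le Y'\Rightarrow X\sharp Y\le X'\sharp Y'$ on $\mathcal M_n^+$ are exactly the properties used, both of which are standard Kubo--Ando facts.
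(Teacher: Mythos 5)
Your proposal is correct and follows essentially the same chain as the paper's proof: reverse the sectorial inequality of Lemma \ref{lemma_sharpsec} applied to $f(A),f(B)$, pass to $f(\Re A)\sharp f(\Re B)$ via Proposition \ref{prop_f_real_sec_f} and the homogeneity/monotonicity of $\sharp$, apply Lemma \ref{lemma_ando_hiai} to the positive matrices $\Re A,\Re B$, and finish with Proposition \ref{prop_f_r_f}. In fact you are slightly more careful than the paper, which invokes Lemma \ref{lemma_sharpsec} on the pair $f(A),f(B)$ without explicitly verifying $W(f(A)),W(f(B))\subset S_\alpha$; your justification via the integral representation $f(A)=\int_0^1\mathcal{I}!_tA\,d\nu_f(t)$ and the convex-cone/inversion invariance of $S_\alpha$ is exactly the missing observation.
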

\begin{proof} We have
\begin{eqnarray}
\cos^{2}(\alpha)\Re(f(A)\sharp f(B))&\leq &\Re f(A)\sharp \Re f(B)\hspace{4cm}({\text{by Lemma}}\;\ref{lemma_sharpsec})\nonumber\\
&\leq &  \left\{\sec^{2}(\alpha)f(\Re A)\right\}\sharp  \left\{\sec^{2}(\alpha)f(\Re B)\right\}\hspace{0.4cm}({\text{by Proposition}}\;\ref{prop_f_real_sec_f})\nonumber\\
&=&\sec^{2}(\alpha)f(\Re A)\sharp f(\Re B)\nonumber\\
&\leq &\sec^{2}(\alpha)f(\Re A\nabla\Re B)\hspace{3.7cm}({\text{by Lemma}}\;\ref{lemma_ando_hiai})\nonumber\\
&\leq &\sec^{2}(\alpha)\Re f(A\nabla B)\hspace{3.7cm}({\text{by Proposition}}\;\ref{prop_f_r_f}).\nonumber
\end{eqnarray}
Thus, we have shown that
\[\Re(f(A)\sharp f(B)) \leq\sec^{4}(\alpha)\Re(f(A\nabla B)),\]
which completes the proof.
\end{proof}
\section{Norm inequalities for accretive matrices}
In this section, we present norm inequalities for accretive matrices. We begin with the following accretive version of Lemma \ref{lemma_f_of_norm_f}.
\begin{proposition}\label{prop_f_norm_of_f} Let $ A\in\mathcal{M}_n $ be an accretive matrix and let $ \parallel.\parallel $ be normalized unitarily invariant norm . Then for $ f\in \mathfrak{m} $,
\begin{equation}
f(\parallel\Re A\parallel)\leq\parallel\Re f(A)\parallel.
\end{equation}
\end{proposition}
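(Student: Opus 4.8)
The plan is to reduce the accretive statement to the positive case that is already available in Lemma \ref{lemma_f_of_norm_f}, using the real-part comparison of Proposition \ref{prop_f_r_f}. First I would observe that since $A$ is accretive, $\Re A$ is positive definite, so $\Re A\in\mathcal{M}_n^+$ and its spectrum lies in $(0,\infty)$. Applying Lemma \ref{lemma_f_of_norm_f} directly to the positive matrix $\Re A$ (with the given normalized unitarily invariant norm $\|\cdot\|$) yields
\begin{equation*}
f(\|\Re A\|)\leq \|f(\Re A)\|.
\end{equation*}

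Next I would compare $f(\Re A)$ with $\Re f(A)$. By Proposition \ref{prop_f_r_f} we have $f(\Re A)\leq \Re f(A)$, and moreover $f(\Re A)>0$ since $f$ maps $(0,\infty)$ into $(0,\infty)$ and $\Re A>0$, while $\Re f(A)>0$ because $f(A)$ is again accretive (the ``consequently'' part of Proposition \ref{prop_f_r_f}). Thus $0\leq f(\Re A)\leq \Re f(A)$, and since a unitarily invariant norm is monotone on the positive cone (Weyl's monotonicity principle for singular values, equivalently Fan dominance), we obtain
\begin{equation*}
\|f(\Re A)\|\leq \|\Re f(A)\|.
\end{equation*}

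Chaining the two displayed inequalities gives $f(\|\Re A\|)\leq \|f(\Re A)\|\leq \|\Re f(A)\|$, which is the claim. The only point that needs a word of justification beyond the quoted results is the monotonicity of $\|\cdot\|$ on positive matrices; this is classical and requires no sectoriality hypothesis, which is why the statement holds for all accretive $A$ without restricting $W(A)$ to a sector. I do not expect any genuine obstacle here: the argument is a short two-step reduction, and the main work (both the scalar-from-positive estimate and the real-part inequality) has already been done in Lemma \ref{lemma_f_of_norm_f} and Proposition \ref{prop_f_r_f}.
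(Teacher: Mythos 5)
Your proposal is correct and follows essentially the same two-step argument as the paper: apply Lemma \ref{lemma_f_of_norm_f} to the positive matrix $\Re A$, then use Proposition \ref{prop_f_r_f} together with the monotonicity of unitarily invariant norms on the positive cone to pass from $\|f(\Re A)\|$ to $\|\Re f(A)\|$. The only difference is that you spell out the norm-monotonicity step, which the paper leaves implicit.
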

\begin{proof}
 For accretive $A$, we have
\begin{align*}
\parallel\Re f(A)\parallel &\geq \parallel f(\Re A)\parallel\hspace{3cm}(\text{by Proposition}\; \ref{prop_f_r_f})\\
&\geq f(\parallel\Re A\parallel)\hspace{3cm}(\text{by Lemma}\; \ref{lemma_f_of_norm_f}),
\end{align*}
completing the proof.
\end{proof} 
A reversed version can be stated as follows, when sectorial matrices interfere.
\begin{corollary} Let $ A\in\mathcal{M}_n $ be an accretive matrix such that $W(A)\subset S_\alpha$ for some $0\leq \alpha<\frac{\pi}{2}$ and let $ \parallel.\parallel_\infty $ be the usual operator norm. Then for $ f\in \mathfrak{m} ,$
\begin{align*}
f(\parallel\Re A\parallel_\infty)\leq\parallel\Re f(A)\parallel_\infty\leq\sec^2(\alpha)f(\parallel\Re A\parallel_\infty).
\end{align*}
\end{corollary}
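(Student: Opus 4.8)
The plan is to split the claimed double inequality into its two halves and handle each by combining an already-established accretive result with the sectorial norm comparison of Lemma \ref{norm}. The left inequality $f(\|\Re A\|_\infty)\leq \|\Re f(A)\|_\infty$ is not new: it is precisely the content of Proposition \ref{prop_f_norm_of_f} specialized to the operator norm (which is a normalized unitarily invariant norm), so I would simply cite it. Hence all the work is in the right inequality $\|\Re f(A)\|_\infty\leq \sec^2(\alpha)\, f(\|\Re A\|_\infty)$.

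For the right inequality, the first step is to invoke Proposition \ref{prop_f_real_sec_f}, which gives the operator inequality $\Re(f(A))\leq \sec^2(\alpha)\, f(\Re A)$ for sectorial $A$. Since both sides are positive (indeed $f(\Re A)>0$ because $f:(0,\infty)\to(0,\infty)$ and $\Re A>0$), taking the operator norm of both sides preserves the inequality, so $\|\Re f(A)\|_\infty\leq \sec^2(\alpha)\,\|f(\Re A)\|_\infty$. The second step is to get rid of the norm of $f(\Re A)$ in favor of $f$ of a scalar: since $\Re A$ is positive definite, $\|f(\Re A)\|_\infty = f(\|\Re A\|_\infty)$ because for a positive operator the operator norm is the largest eigenvalue and $f$ is monotone increasing, so $f$ of the spectrum is again a positive set whose maximum is $f$ of the maximum of the spectrum of $\Re A$. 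Chaining these two observations yields exactly $\|\Re f(A)\|_\infty\leq \sec^2(\alpha)\, f(\|\Re A\|_\infty)$.

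I do not anticipate a genuine obstacle here; the proof is essentially a two-line application of Proposition \ref{prop_f_real_sec_f} together with the elementary spectral identity $\|f(P)\|_\infty=f(\|P\|_\infty)$ for positive $P$ and monotone $f$. The only point requiring a word of care is the interchange between "norm of $f$ of a positive matrix" and "$f$ of the norm," and that is immediate from the spectral mapping theorem plus monotonicity of $f$; alternatively one can note $\|f(\Re A)\|_\infty = f(\|\Re A\|_\infty)$ follows from Lemma \ref{lemma_f_of_norm_f} (giving $\geq$) combined with the trivial $f(\Re A)\leq f(\|\Re A\|_\infty)\mathcal{I}$ by monotone functional calculus (giving $\leq$). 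I would write the proof as the following short chain.

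\begin{proof}
The left inequality is Proposition \ref{prop_f_norm_of_f} applied to the (normalized unitarily invariant) operator norm. For the right inequality, Proposition \ref{prop_f_real_sec_f} gives
$$\Re(f(A))\leq \sec^2(\alpha)\, f(\Re A),$$
and since both sides are positive, taking the operator norm yields
$$\|\Re f(A)\|_\infty\leq \sec^2(\alpha)\,\|f(\Re A)\|_\infty.$$
Finally, $\Re A$ is positive definite and $f$ is matrix monotone increasing, so $f(\Re A)\leq f(\|\Re A\|_\infty)\mathcal{I}$ by functional calculus, hence $\|f(\Re A)\|_\infty\leq f(\|\Re A\|_\infty)$. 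Combining the last two displays completes the proof.
\end{proof}
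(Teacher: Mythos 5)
Your proposal is correct. It agrees with the paper on the left inequality (both simply cite Proposition \ref{prop_f_norm_of_f}) and on the key ingredient for the right one, namely Proposition \ref{prop_f_real_sec_f}; where you diverge is in how you convert $f(\Re A)$ into the scalar $f(\|\Re A\|_\infty)$. The paper works at the level of quadratic forms: it chains $\langle \Re f(A)x,x\rangle \leq \sec^2(\alpha)\langle f(\Re A)x,x\rangle \leq \sec^2(\alpha) f(\langle \Re A\,x,x\rangle)$ using Lemma \ref{f<>} (the operator Jensen inequality) for the last step, and then takes the supremum over unit vectors, using that $f(A)$ is accretive so that $\|\Re f(A)\|_\infty=\sup_{\|x\|=1}\langle\Re f(A)x,x\rangle$, and that $f$ is increasing to pull the supremum inside. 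You instead pass directly to operator norms, using the order-monotonicity of $\|\cdot\|_\infty$ on positive matrices and the exact spectral identity $\|f(\Re A)\|_\infty=f(\|\Re A\|_\infty)$ for the positive definite matrix $\Re A$ and the increasing function $f$. Your route is slightly more economical: it replaces an inequality (Jensen) by an equality (spectral mapping plus monotonicity), so no sharpness is lost, and it sidesteps the bookkeeping with suprema. The paper's quadratic-form argument has the mild advantage of making explicit where accretivity of $f(A)$ enters (to identify the norm with the supremum of the real quadratic form), a point you use implicitly when you assert that both sides of the operator inequality are positive before comparing norms; you do justify this via Proposition \ref{prop_f_r_f}, so there is no gap.
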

\begin{proof} The first inequality follows from Proposition \ref{prop_f_norm_of_f}. For the second inequality, Proposition \ref{prop_f_real_sec_f} and Lemma \ref{f<>} imply
$$\Re\left<f(A)x,x\right>=\left\langle\Re f(A)x,x\right\rangle\leq\sec^2(\alpha)\left\langle f(\Re A)x,x\right\rangle\leq\sec^2(\alpha) f(\Re\left<Ax,x\right>). $$

Notice that since $A$ is accretive matrix, $f(A)$ is accretive by Proposition \ref{prop_f_r_f}. Taking the supremum over $\|x\|=1$ of the latter inequality implies

\begin{align*}
\|\Re f(A)\|_\infty&=\sup_{\|x\|=1}\left<\Re f(A)x,x\right>\hspace{0.4cm}(\text{since}\;f(A)\;{\text{is accretive}})\\
&\leq \sec^2(\alpha)\sup_{\|x\|=1}f(\Re\left<Ax,x\right>)\\
&=\sec^2(\alpha)f\left(\sup_{\|x\|=1}\left<\Re Ax,x\right>\right)\hspace{0.4cm}(\text{since}\;f\;{\text{is increasing}})\\
&=\sec^2(\alpha)f(\|\Re A\|_\infty)\hspace{0.4cm}(\text{since}\;\Re A>0).
\end{align*}
This completes the proof.

\end{proof}

\begin{corollary}
Let $ A, B \in \mathcal{M}_{n} $ be accretive matrices such that $ W(A), W(B)\subset S_{\alpha} $ for some $0\leq \alpha<\frac{\pi}{2}$ and let $\Phi$ be a unital positive linear map. Then, for any unitarily invariant norm $ \parallel\cdot\parallel$ and any $f\in \mathfrak{m}$,
$\displaystyle$
\begin{center}
$ \cos^{3}(\alpha)\parallel\Phi(A\sigma_f B)\parallel\ \leq
 \parallel\Phi(A)\sigma_f \Phi(B)\parallel.$
\end{center}
\end{corollary}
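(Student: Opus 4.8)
The plan is to chain together the sectorial norm inequality of Lemma~\ref{norm} with the Ando-type inequality for arbitrary means established in Theorem~\ref{thm_Phicos}. First I would observe that, by Proposition~\ref{r_a_sigma_b>}, the matrix $A\sigma_f B$ is accretive whenever $A,B$ are accretive; moreover, since the sector $S_\alpha$ is closed under the operations $!_t$ and under integration (convex combinations and limits stay in the sector), one checks that $W(A\sigma_f B)\subset S_\alpha$ as well — this is needed to apply Lemma~\ref{norm} to $A\sigma_f B$. Likewise $\Phi(A\sigma_f B)$ is accretive and, since positive linear maps do not enlarge the numerical range beyond the convex hull of $W$, $W(\Phi(A\sigma_f B))\subset S_\alpha$, so Lemma~\ref{norm} applies to $\Phi(A\sigma_f B)$ too.

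The computation then proceeds as follows. Applying the left inequality of Lemma~\ref{norm} to the sectorial matrix $\Phi(A\sigma_f B)$ gives
\begin{align*}
\cos(\alpha)\,\|\Phi(A\sigma_f B)\|\leq \|\Re\Phi(A\sigma_f B)\|.
\end{align*}
Next, by Theorem~\ref{thm_Phicos} we have $\cos^2(\alpha)\,\Re\Phi(A\sigma_f B)\leq\Re(\Phi(A)\sigma_f\Phi(B))$; since both sides are positive semidefinite, monotonicity of unitarily invariant norms yields
\begin{align*}
\cos^2(\alpha)\,\|\Re\Phi(A\sigma_f B)\|\leq \|\Re(\Phi(A)\sigma_f\Phi(B))\|.
\end{align*}
Finally, the trivial bound $\|\Re X\|\leq\|X\|$ applied to $X=\Phi(A)\sigma_f\Phi(B)$ gives $\|\Re(\Phi(A)\sigma_f\Phi(B))\|\leq\|\Phi(A)\sigma_f\Phi(B)\|$. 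Combining the three displays produces $\cos^3(\alpha)\,\|\Phi(A\sigma_f B)\|\leq\|\Phi(A)\sigma_f\Phi(B)\|$, as desired.

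The only genuinely delicate point — and the step I would expect to be the main obstacle — is justifying that the numerical ranges of $A\sigma_f B$ and of $\Phi(A\sigma_f B)$ actually lie in $S_\alpha$, so that the reverse norm inequality of Lemma~\ref{norm} is legitimately available. For $A\sigma_f B=\int_0^1 A!_tB\,d\nu_f(t)$ this follows because $S_\alpha$ is invariant under inversion, addition, and hence under the harmonic mean $!_t$, and then under averaging against the probability measure $\nu_f$ (cf.\ the use of \cite[Proposition~3.2]{sab-lama} in the proof of Theorem~\ref{theorem_mixed_sharp_nabla_har}); for $\Phi$ one uses that $\langle\Phi(X)y,y\rangle$ is a limit of convex combinations of values $\langle Xz,z\rangle$, so $W(\Phi(X))$ is contained in the closed convex hull of $W(X)\subset S_\alpha$. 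Once this containment is in hand, the rest is the routine three-line chaining above.
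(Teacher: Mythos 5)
Your proof is correct and follows essentially the same three-step chain as the paper: Lemma \ref{norm} applied to the sectorial matrix $\Phi(A\sigma_f B)$, then Theorem \ref{thm_Phicos} with monotonicity of the norm on positive matrices, then $\|\Re X\|\leq\|X\|$. Your extra care in verifying that $W(A\sigma_f B)$ and $W(\Phi(A\sigma_f B))$ lie in $S_\alpha$ is a point the paper leaves implicit, and your justification of it is sound.
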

\begin{proof} By applying Lemma \ref{norm} and through the inequality \eqref{Phicos}, we get
\begin{align*}
\cos^{3}(\alpha)\parallel\Phi(A\sigma_f B)\parallel\ \leq\cos^{2}(\alpha)\parallel \Re\Phi(A\sigma_f B)\parallel\leq \parallel \Re\left(\Phi(A)\sigma_f \Phi(B)\right)\parallel
\leq \parallel\Phi(A)\sigma_f \Phi(B)\parallel,
\end{align*}
which completes the proof.
\end{proof}

\begin{corollary}
Let $ A, B \in \mathcal{M}_{n} $ be accretive matrices such that $ W(A), W(B)\subset S_{\alpha} $ for some $0\leq \alpha<\frac{\pi}{2}$ and let $\Phi$ be a positive linear map. Then, for any unitarily invariant norm $ \parallel\cdot\parallel$ and any $f\in \mathfrak{m}$, we have for  $t=f'(1)\in(0,1)$
\begin{center}
$ \cos^{3}(\alpha)\parallel\Phi(A\sigma_f B)\parallel\ \leq
 \parallel\Phi(A)\nabla_t \Phi(B)\parallel $
\end{center}
\end{corollary}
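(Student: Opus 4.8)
The plan is to imitate almost verbatim the proof of the preceding corollary, replacing the last use of the Ando-type inequality by the one involving the weighted arithmetic mean. First I would invoke Lemma \ref{norm} (the sectorial reverse of $\|\Re X\|\leq\|X\|$), applied to the accretive matrix $\Phi(A\sigma_f B)$: since $A,B$ are sectorial in $S_\alpha$ and, by Proposition \ref{r_a_sigma_b>}, $A\sigma_f B$ is accretive while its numerical range still lies in $S_\alpha$ (because $S_\alpha$ is a convex cone invariant under inversion, so $A!_t B\in S_\alpha$ and hence any convex combination/integral stays in $S_\alpha$), we get $\cos(\alpha)\|\Phi(A\sigma_f B)\|\leq\|\Re\Phi(A\sigma_f B)\|$. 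Strictly speaking one also needs $W(\Phi(A\sigma_f B))\subset S_\alpha$; this follows since a positive linear map does not enlarge the numerical range beyond the closed convex hull of $W(A\sigma_f B)$, so sectoriality is preserved.

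Next I would apply the inequality \eqref{sigmanabla} from the theorem proved just before this section, namely $\Re\Phi(A\sigma_f B)\leq\sec^2(\alpha)\,\Re\Phi(A\nabla_t B)$ valid for $t=f'(1)\in(0,1)$, to pass from $\sigma_f$ to $\nabla_t$ at the cost of a factor $\sec^2(\alpha)$. Combining with the previous step gives $\cos^3(\alpha)\|\Phi(A\sigma_f B)\|\leq\|\Re\Phi(A\nabla_t B)\|$. Finally, using $\Re\Phi(A\nabla_t B)=\Phi(\Re(A\nabla_t B))=\Phi(\Re A)\nabla_t\Phi(\Re B)=\Re(\Phi(A)\nabla_t\Phi(B))$ via \eqref{real_phi_real} and the linearity of $\Phi$ together with $\nabla_t$, and then the elementary norm bound $\|\Re X\|\leq\|X\|$ for any unitarily invariant norm applied to $X=\Phi(A)\nabla_t\Phi(B)$, I obtain the claimed $\cos^3(\alpha)\|\Phi(A\sigma_f B)\|\leq\|\Phi(A)\nabla_t\Phi(B)\|$.

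Concretely the chain of inequalities reads
\begin{align*}
\cos^{3}(\alpha)\parallel\Phi(A\sigma_f B)\parallel
&\leq\cos^{2}(\alpha)\parallel \Re\Phi(A\sigma_f B)\parallel\hspace{1cm}(\text{by Lemma}\;\ref{norm})\\
&\leq \parallel \Re\Phi(A\nabla_t B)\parallel\hspace{2.3cm}(\text{by}\;\eqref{sigmanabla})\\
&= \parallel \Re\left(\Phi(A)\nabla_t \Phi(B)\right)\parallel\hspace{1.5cm}(\text{by}\;\eqref{real_phi_real})\\
&\leq \parallel\Phi(A)\nabla_t \Phi(B)\parallel.
\end{align*}

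The only genuinely delicate point—and the one I would spell out carefully rather than leave implicit—is the justification that $\Phi(A\sigma_f B)$ is a sectorial matrix with angle $\alpha$, since both Lemma \ref{norm} and the inequality \eqref{sigmanabla} require sectoriality. Once that is granted, every remaining step is either an already-proved result or the trivial norm monotonicity $\|\Re X\|\leq\|X\|$. The argument is essentially the same template as the preceding corollary, with \eqref{Phicos} replaced by \eqref{sigmanabla} and the hypothesis $t=f'(1)\in(0,1)$ (present in \eqref{sigmanabla}) carried along; in particular this corollary strengthens the previous one only in that it bounds $\|\Phi(A\sigma_f B)\|$ by the arithmetic-mean quantity rather than by $\|\Phi(A)\sigma_f\Phi(B)\|$, and no new technique is needed.
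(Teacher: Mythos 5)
Your proof is correct and follows essentially the same chain as the paper's: Lemma \ref{norm} applied to $\Phi(A\sigma_f B)$, then \eqref{sigmanabla} combined with $\Re\Phi(A\nabla_t B)=\Re(\Phi(A)\nabla_t\Phi(B))$, then $\|\Re X\|\leq\|X\|$. The sectoriality of $\Phi(A\sigma_f B)$ that you flag is indeed left implicit in the paper; it holds because $\Phi$ preserves the order on Hermitian parts, so $|\langle \Im\Phi(X)x,x\rangle|\leq\tan(\alpha)\langle \Re\Phi(X)x,x\rangle$ follows from the corresponding inequality for $X$ (the convex-hull argument you cite is only valid for unital $\Phi$, but the cone property of $S_\alpha$ makes unitality unnecessary here).
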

\begin{proof}Using Lemma \ref{norm} and by \eqref{sigmanabla} , we get
\begin{align*}
\cos^{3}(\alpha)\parallel\Phi(A\sigma_f B)\parallel\leq\cos^{2}(\alpha)\parallel \Re\Phi(A\sigma_f B)\parallel \leq\parallel \Re\left(\Phi(A)\nabla_t\Phi(B)\right)\parallel
\leq\parallel\Phi(A)\nabla_t\Phi(B)\parallel,
\end{align*}
which completes the proof.
\end{proof}
Now we are ready to present the accretive version of Lemma \ref{lemma_ando_zhan}.
\begin{theorem}\label{ando_zhan}
Let $ A, B \in \mathcal{M}_{n} $ be accretive matrices such that $ W(A), W(B)\subset S_{\alpha}$ for some $ 0\leq \alpha<\frac{\pi}{2} $. Then, for any unitarily invariant norm $ \parallel\cdot\parallel$ and any $f\in \mathfrak{m}$,
\begin{center}
$\parallel f(A+B)\parallel\leq\sec^{3}(\alpha)\parallel f(A)+f(B)\parallel.$
\end{center}
\end{theorem}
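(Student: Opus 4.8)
The plan is to reduce the accretive inequality to the positive (Ando--Zhan) case by sandwiching $f(A+B)$ and $f(A)+f(B)$ between their real parts and the corresponding positive expressions, paying the price of a power of $\sec(\alpha)$ at each step. The key observation is that when $W(A),W(B)\subset S_\alpha$, then $W(A+B)\subset S_\alpha$ as well (the sector is closed under addition), so all of $A$, $B$, $A+B$ are sectorial with the same $\alpha$, and hence Lemma \ref{norm}, Proposition \ref{prop_f_r_f} and Proposition \ref{prop_f_real_sec_f} all apply to each of them.

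First I would start from $\|f(A+B)\|$ and apply the right-hand inequality of Lemma \ref{norm} to $f(A+B)$ (which is accretive by Proposition \ref{prop_f_r_f}, and in fact sectorial): this gives $\cos(\alpha)\|f(A+B)\|\le\|\Re f(A+B)\|$. Next, since $\Re f(A+B)\le \sec^2(\alpha)\,f(\Re(A+B))=\sec^2(\alpha)\,f(\Re A+\Re B)$ by Proposition \ref{prop_f_real_sec_f}, and both sides are positive, monotonicity of the unitarily invariant norm on positive matrices yields $\|\Re f(A+B)\|\le\sec^2(\alpha)\,\|f(\Re A+\Re B)\|$. Now $\Re A,\Re B$ are genuinely positive definite, so the Ando--Zhan inequality (Lemma \ref{lemma_ando_zhan}) applies: $\|f(\Re A+\Re B)\|\le\|f(\Re A)+f(\Re B)\|$.

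It remains to pass back from $f(\Re A)+f(\Re B)$ to $f(A)+f(B)$. Here I would use Proposition \ref{prop_f_r_f} in the form $f(\Re A)\le\Re f(A)$ and $f(\Re B)\le\Re f(B)$, so $f(\Re A)+f(\Re B)\le \Re(f(A)+f(B))$, and again monotonicity of the norm on positive matrices gives $\|f(\Re A)+f(\Re B)\|\le\|\Re(f(A)+f(B))\|$. Finally, since $f(A)$ and $f(B)$ are accretive (Proposition \ref{prop_f_r_f}), so is $f(A)+f(B)$, and the trivial bound $\|\Re X\|\le\|X\|$ gives $\|\Re(f(A)+f(B))\|\le\|f(A)+f(B)\|$. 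Chaining the five inequalities:
\begin{align*}
\cos(\alpha)\|f(A+B)\|&\le\|\Re f(A+B)\|\le\sec^2(\alpha)\|f(\Re A+\Re B)\|\\
&\le\sec^2(\alpha)\|f(\Re A)+f(\Re B)\|\le\sec^2(\alpha)\|f(A)+f(B)\|,
\end{align*}
which rearranges to $\|f(A+B)\|\le\sec^3(\alpha)\|f(A)+f(B)\|$, as desired.

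The steps are all routine once the tools are in place; the only point that needs a word of justification is that $A+B$ is sectorial with the same $\alpha$ — this follows because $W(A+B)\subset W(A)+W(B)\subset S_\alpha$ since a sector is a convex cone closed under addition — and that $f$ applied to a sectorial matrix (via Theorem \ref{thm_main_1}) is again accretive, which is exactly the content of Proposition \ref{prop_f_r_f}. I do not anticipate a genuine obstacle here; the main thing to be careful about is bookkeeping the powers of $\sec(\alpha)$: one factor from pulling $f(A+B)$ down to its real part (Lemma \ref{norm}), and two factors from the reversed Choi-type estimate $\Re f(A+B)\le\sec^2(\alpha)f(\Re(A+B))$, for a total of $\sec^3(\alpha)$.
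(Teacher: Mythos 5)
Your proposal is correct and follows essentially the same chain of inequalities as the paper's proof: Lemma \ref{norm} to pass to $\Re f(A+B)$, Proposition \ref{prop_f_real_sec_f} to reach $f(\Re A+\Re B)$, the Ando--Zhan inequality \eqref{17} for the positive matrices $\Re A,\Re B$, Proposition \ref{prop_f_r_f} to return to $\Re(f(A)+f(B))$, and finally $\|\Re X\|\le\|X\|$. Your added remarks on why $A+B$ stays in $S_\alpha$ and why the norm monotonicity steps are legitimate are correct and, if anything, slightly more careful than the paper's own write-up.
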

\begin{proof} We have
\begin{align*}
\cos(\alpha)\parallel f(A+B)\parallel&\leq\parallel \Re f(A+B)\parallel\hspace{2 cm}({\text{by Lemma}}\;\ref{norm})\\
&\leq\sec^{2}(\alpha)\parallel f(\Re A+\Re B)\parallel\hspace{0.4 cm}({\text{by Proposition}}\;\ref{prop_f_real_sec_f})\\
&\leq\sec^{2}(\alpha)\parallel f(\Re A)+f(\Re B)\parallel\hspace{0.4cm}({\text{by}}\;\eqref{17})\\
&\leq\sec^{2}(\alpha)\parallel \Re f(A)+\Re f(B)\parallel \hspace{0.4cm}({\text{by Proposition}}\;\ref{prop_f_r_f})\\
&=\sec^{2}(\alpha)\parallel \Re (f(A)+f(B))\parallel\\
&\leq\sec^{2}(\alpha)\parallel f(A)+f(B)\parallel \hspace{1.5cm}({\text{by Lemma}}\;\ref{norm}).
\end{align*}
Consequently
\begin{align*}
\parallel f(A+B)\parallel\leq\sec^{3}(\alpha)\parallel f(A)+f(B)\parallel,
\end{align*}
which completes the proof.
\end{proof}

The norm version of Theorem \ref{thm_concave_t} reads as follows.

\begin{corollary} Let $ A, B \in \mathcal{M}_{n} $ be accretive matrices such that $ W(A), W(B)\subset S_{\alpha} $ for some $0\leq \alpha<\frac{\pi}{2}$ and let $f\in \mathfrak{m}$. If $t\in (0,1),$ then 
\begin{align*}
\parallel f(A\nabla_t B)\parallel\geq \cos^3(\alpha)\parallel f(A)\nabla_t f(B)\parallel
\end{align*}
for any unitarily invariant norm $\|\cdot\|.$
\end{corollary}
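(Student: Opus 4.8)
The plan is to combine the reversed norm inequality for sectorial matrices (Lemma \ref{norm}) with the sectorial version of the concavity estimate already established in Theorem \ref{thm_concave_t}. The starting point is the observation that $f(A\nabla_t B)$ is accretive: indeed $A\nabla_t B$ is accretive since the accretive matrices form a convex cone, and then $f$ of an accretive matrix is accretive by Proposition \ref{prop_f_r_f}. This legitimizes applying Lemma \ref{norm} to $f(A\nabla_t B)$.

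First I would apply the left-hand inequality of Lemma \ref{norm} to $f(A\nabla_t B)$ to get $\cos(\alpha)\|f(A\nabla_t B)\|\le \|\Re f(A\nabla_t B)\|$. Next, I would invoke Theorem \ref{thm_concave_t}, which gives $\Re f(A\nabla_t B)\ge \cos^2(\alpha)\,\Re(f(A)\nabla_t f(B))$. Since both sides are positive semidefinite, monotonicity of unitarily invariant norms on positive matrices yields $\|\Re f(A\nabla_t B)\|\ge \cos^2(\alpha)\,\|\Re(f(A)\nabla_t f(B))\|$. Finally, $f(A)$ and $f(B)$ are accretive (again Proposition \ref{prop_f_r_f}), so $f(A)\nabla_t f(B)$ is accretive, and the right-hand inequality $\|\Re C\|\le \|C\|$ — valid for any matrix, as noted just before Lemma \ref{norm} — would instead be used the other way: here I actually need the lower bound $\|\Re(f(A)\nabla_t f(B))\|\ge \cos(\alpha)\|f(A)\nabla_t f(B)\|$ from Lemma \ref{norm}. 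Chaining these gives
\begin{align*}
\cos(\alpha)\|f(A\nabla_t B)\|&\ge \cos^2(\alpha)\|\Re(f(A)\nabla_t f(B))\|\ge \cos^3(\alpha)\|f(A)\nabla_t f(B)\|,
\end{align*}
and dividing by $\cos(\alpha)>0$ produces the claimed bound $\|f(A\nabla_t B)\|\ge \cos^3(\alpha)\|f(A)\nabla_t f(B)\|$.

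The only subtle point — and the step I would be most careful about — is the use of monotonicity of unitarily invariant norms: this requires that whenever $0\le X\le Y$ one has $\|X\|\le\|Y\|$, which is standard but relies on $X,Y$ being positive semidefinite, hence on the fact established above that all the relevant real parts are positive definite. I would state explicitly that $\Re f(A\nabla_t B)$, $\Re(f(A)\nabla_t f(B))$ are positive so that the inequality from Theorem \ref{thm_concave_t} can be promoted to a norm inequality. No genuine obstacle is expected; this is a routine concatenation of the previously proved sectorial estimates, exactly parallel in spirit to the proof of Theorem \ref{ando_zhan}.
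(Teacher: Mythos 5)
Your overall strategy is the paper's: sandwich the operator inequality of Theorem \ref{thm_concave_t} between the two halves of Lemma \ref{norm}. But the first link of your chain is applied in the wrong direction, and as written the argument does not go through. You start from $\cos(\alpha)\|f(A\nabla_t B)\|\le \|\Re f(A\nabla_t B)\|$, which is an \emph{upper} bound on $\cos(\alpha)\|f(A\nabla_t B)\|$. Combined with $\|\Re f(A\nabla_t B)\|\ge \cos^2(\alpha)\|\Re(f(A)\nabla_t f(B))\|$ from Theorem \ref{thm_concave_t}, you only learn that $\|\Re f(A\nabla_t B)\|$ dominates both of the quantities $\cos(\alpha)\|f(A\nabla_t B)\|$ and $\cos^2(\alpha)\|\Re(f(A)\nabla_t f(B))\|$; no comparison between these two follows, so the first inequality in your displayed chain, $\cos(\alpha)\|f(A\nabla_t B)\|\ge \cos^2(\alpha)\|\Re(f(A)\nabla_t f(B))\|$, is unjustified. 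What is needed at this first step is the \emph{other} half of Lemma \ref{norm}, namely $\|\Re X\|\le\|X\|$ (valid for every matrix), applied to $X=f(A\nabla_t B)$: this gives the genuine lower bound $\|f(A\nabla_t B)\|\ge \|\Re f(A\nabla_t B)\|$, which is what the paper uses. With that correction the chain reads
\begin{align*}
\|f(A\nabla_t B)\|\ \ge\ \|\Re f(A\nabla_t B)\|\ \ge\ \cos^2(\alpha)\,\|\Re(f(A)\nabla_t f(B))\|\ \ge\ \cos^3(\alpha)\,\|f(A)\nabla_t f(B)\|,
\end{align*}
no division by $\cos(\alpha)$ is required, and the constant comes out to exactly $\cos^3(\alpha)$.

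The remaining ingredients of your write-up are sound and coincide with the paper's: the middle step is Theorem \ref{thm_concave_t} promoted to norms via monotonicity of unitarily invariant norms on positive semidefinite matrices (and you are right to record that $\Re f(A\nabla_t B)$ and $\Re(f(A)\nabla_t f(B))$ are positive, which follows from Proposition \ref{prop_f_r_f} and the fact that accretive matrices form a convex cone), and the final step is the left-hand inequality of Lemma \ref{norm} applied to $f(A)\nabla_t f(B)$, exactly as you describe.
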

\begin{proof}  Using Lemma \ref{norm} and by \eqref{f_nabla}, we get
$$\parallel f(A\nabla_t B)\parallel\geq \parallel\Re f(A\nabla_t B)\parallel\geq\cos^2(\alpha)\parallel \Re\left(f(A)\nabla_t f(B)\right)\parallel\geq\cos^3(\alpha)\parallel f(A)\nabla_t f(B)\parallel.$$
\end{proof}
Next, we present the accretive version of Lemma \ref{sigma_norm}.
\begin{theorem}\label{theorem_norm_of_sigma}
Let $ A, B \in \mathcal{M}_{n} $ be accretive matrices such that $ W(A), W(B)\subset S_{\alpha}$ for some $ 0\leq \alpha<\frac{\pi}{2} $. Then for any unitarily invariant norm $ \parallel\cdot\parallel$ and any $f\in \mathfrak{m}$,
\begin{align}
\parallel A \sigma_f B\parallel\leq\sec^3(\alpha)\left( \parallel A\parallel\sigma_f\parallel B\parallel\right).
\end{align}
\end{theorem}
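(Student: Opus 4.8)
The plan is to sandwich $\|A\sigma_f B\|$ between its real part and the norm of the mean of the real parts, using the sectorial norm estimate of Lemma~\ref{norm} at both ends, exactly in the spirit of the proofs of Theorem~\ref{ando_zhan} and the two corollaries immediately preceding this statement. The key ingredients are: Lemma~\ref{norm} (giving $\cos(\alpha)\|X\|\le\|\Re X\|\le\|X\|$ for $W(X)\subset S_\alpha$), Proposition~\ref{prop_real_a_sigma_b_less} (the reverse real-part estimate $\Re(A\sigma_f B)\le\sec^2(\alpha)(\Re A)\sigma_f(\Re B)$), and the positive-matrix norm bound of Lemma~\ref{sigma_norm} applied to $\Re A,\Re B\in\mathcal{M}_n^+$.

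First I would note that $A\sigma_f B$ is accretive by Proposition~\ref{r_a_sigma_b>}, and moreover that $W(A\sigma_f B)\subset S_\alpha$: since $\sigma_f$ is an average of harmonic means $A!_tB$, and $S_\alpha$ is convex and invariant under inversion and addition, each $A!_tB$ has numerical range in $S_\alpha$ (cf.\ \cite[Proposition 3.2]{sab-lama}), hence so does the integral. This justifies applying Lemma~\ref{norm} to $X=A\sigma_f B$. Then the chain is
\begin{align*}
\cos(\alpha)\,\|A\sigma_f B\|
&\le \|\Re(A\sigma_f B)\| \hspace{2cm}(\text{by Lemma}\;\ref{norm})\\
&\le \sec^2(\alpha)\,\|(\Re A)\sigma_f(\Re B)\| \hspace{1cm}(\text{by Proposition}\;\ref{prop_real_a_sigma_b_less})\\
&\le \sec^2(\alpha)\,\big(\|\Re A\|\,\sigma_f\,\|\Re B\|\big) \hspace{1cm}(\text{by Lemma}\;\ref{sigma_norm})\\
&\le \sec^2(\alpha)\,\big(\|A\|\,\sigma_f\,\|B\|\big),
\end{align*}
where the last step uses $\|\Re A\|\le\|A\|$, $\|\Re B\|\le\|B\|$ together with the monotonicity of the scalar mean $\sigma_f$ (which follows from $f$ being monotone). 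Dividing by $\cos(\alpha)$ yields the claimed bound with the factor $\sec^3(\alpha)$.

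I expect the only genuine subtlety to be the justification that $W(A\sigma_f B)\subset S_\alpha$, so that Lemma~\ref{norm} is legitimately applicable; this is where the stability of the sector under averaging of harmonic means must be invoked, as was already done in the proof of Theorem~\ref{theorem_mixed_sharp_nabla_har}. Everything else is a routine concatenation of inequalities already established in the paper. When $A,B>0$ one takes $\alpha=0$ and recovers Lemma~\ref{sigma_norm} as a special case.
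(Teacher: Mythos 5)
Your proof is correct and follows essentially the same chain as the paper's: Lemma~\ref{norm}, then Proposition~\ref{prop_real_a_sigma_b_less}, then Lemma~\ref{sigma_norm}, then $\|\Re A\|\le\|A\|$ with monotonicity of the scalar mean. In fact you are slightly more careful than the paper, which applies Lemma~\ref{norm} to $A\sigma_f B$ without explicitly verifying $W(A\sigma_f B)\subset S_\alpha$; your justification of this point via the stability of $S_\alpha$ under the harmonic means and their average is a worthwhile addition.
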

\begin{proof} Noting Lemma \ref{norm}, \eqref{rleqsec} and Lemma \ref{sigma_norm}, we have
\begin{align*}
\parallel A \sigma_f B\parallel&\leq\sec(\alpha)\parallel\Re( A \sigma_f B)\parallel\leq\sec^3(\alpha)\parallel\Re A\;\sigma_f\;\Re B\parallel\\&\leq\sec^3(\alpha)\left(\parallel \Re A\parallel\sigma_f\parallel\Re B\parallel\right)\leq\sec^3(\alpha)\left( \parallel A\parallel\sigma_f\parallel B\parallel\right),
\end{align*}
which completes the proof.
\end{proof}

{\tiny \vskip 1 true cm }
{\tiny (Y. Bedrani) Department of Mathematics, University of Jordan, Amman 11940, Jordan. 
\textit{E-mail address:} yacinebedrani9@gmail.com}
{\tiny \vskip 0.3 true cm }
 {\tiny (F. Kittaneh) Department of Mathematics, University of Jordan, Amman 11940, Jordan. 
  \textit{E-mail address:} \textit{E-mail address:} fkitt@ju.edu.jo}
 {\tiny \vskip 0.3 true cm }
 {\tiny (M. Sababheh) Dept. of Basic Sciences, Princess Sumaya University for Tech., Amman 11941, Jordan. 
\textit{E-mail address:} sababheh@psut.edu.jo}

 {\tiny \vskip 0.3 true cm }

\end{document}